\documentclass{article}
\usepackage{amsmath, amssymb}
\usepackage[utf8]{inputenc}
\usepackage{graphicx} 
\usepackage{hyperref}
\usepackage{color}
\usepackage{framed}
\usepackage{bbold}
\usepackage{mathtools}

\usepackage{algorithm}
\usepackage{algpseudocode}

\usepackage{microtype}

\usepackage[toc,page]{appendix} 

\usepackage{amsthm}  
\newtheorem*{remark*}{Remark}
\newtheorem{remark}{Remark}
\newtheorem{example}{Example}
\newtheorem{proposition}{Proposition}
\newtheorem*{proposition*}{Proposition}
\newtheorem{lemma}{Lemma}
\newtheorem{theorem}{Theorem}
\newtheorem*{theorem*}{Theorem}
\newtheorem{definition}{Definition}

\newcommand{\norm}[1]{\left\lVert #1 \right\rVert}
\newcommand{\inner}[2]{\left\langle #1, #2 \right\rangle}

\usepackage{float}
\usepackage{caption}
\usepackage{subcaption}
\usepackage[numbers]{natbib}
\usepackage{graphicx} 
\usepackage[a4paper, total={6in, 8.5in}]{geometry} 
\usepackage[utf8]{inputenc} 
\usepackage[T1]{fontenc}
\usepackage{lmodern}
\usepackage{titlesec}
\usepackage{etoolbox}

\usepackage{natbib}
\usepackage{url}

\title{Reversible Stratonovich perturbations for accelerating convergence to equilibrium of Langevin dynamics}
\author{ Guillaume Louis\footnotemark[1], Grigorios A. Pavliotis\footnotemark[2], and Gilles Vilmart\footnotemark[1]}
\date{\today}

\begin{document}
\maketitle
\footnotetext[1]{Section de Math\'{e}matiques, Universit\'{e} de Gen\`{e}ve, CP 64, 1211 Gen\`{e}ve 4, Switzerland. guillaume.louis@unige.ch gilles.vilmart@unige.ch}
\footnotetext[2]{Department of Mathematics, Imperial College, 736a, Huxley Building, South Kensington Campus, United Kingdom. g.pavliotis@imperial.ac.uk}

\begin{abstract}
In this paper, we study the problem of sampling from a probability distribution using Langevin-type dynamics. We consider overdamped Langevin dynamics perturbed by adding a Stratonovich perturbation that preserves reversibility. We prove an optimal scaling for the Stratonovich perturbation applied to Gaussian target distributions and derive an algorithm to construct such perturbation. Our theoretical results are supplemented by numerical experiments in which we compare our proposed method with nonreversible samplers and overdamped Langevin dynamics. Our numerical experiments demonstrate the efficiency of the approach beyond the quadratic case.
\end{abstract}

\textbf{Keywords} \quad Langevin SDEs, MCMC sampling, Stratonovich noise,
reversible dynamics.

AMS subject classification (2020): 60H10, 60H35, 65C05, 60J60, 37M25

\section{Introduction}

Sampling from a probability measure with density \(\pi(\mathbf{x}) \propto e^{-V(\mathbf{x})}\), appears in numerous applications such as molecular dynamics \cite{MolDynLeimkuhler2015}, computational statistics~\cite{CRSW2013}, Bayesian statistical inverse problems \cite{HUQDashti2017}, machine learning~\cite{andrieu2003introduction} and diffusion models for generative artificial intelligence \cite{DiffModelHigham25}. A typical objective is to compute expectations in a large dimension \(d\) of the form 

\[
\pi(f) \coloneqq \mathbb E_\pi[f] = \int_{\mathbb{R}^d} f(\mathbf{x})\, \pi (\mathbf{x})d\mathbf{x}, \quad f: \mathbb{R}^d\to \mathbb{R}, \: f \in L^1(\pi) .
\]
In high dimensions, deterministic quadrature formulas become prohibitively expensive. Markov chain Monte Carlo methods instead construct a stochastic process \((\mathbf{X}_t)_{t\ge0}\) that is ergodic with respect to \(\pi\). Then, under appropriate assumptions, for any integrable function $f$, the temporal average of the process satisfies a strong law of large numbers and a central limit theorem,
\begin{equation*}
\pi_f(t) \coloneqq \frac{1}{t} \int_0^t f(\textbf{X}_s) \,ds  \xrightarrow[\;t \to \infty\;]{\text{a.s.}} \pi(f), \quad \sqrt{t} \left( \pi_f(t) - \pi(f) \right) \xrightarrow[\;t \to \infty\;]{\text{d}} \mathcal{N}(0,\sigma_f^2), \label{temporal average}
\end{equation*}
where \(\sigma_f^2\) is the asymptotic variance \cite{CLTKipnis86}. These two results highlight two complementary criteria of sampling efficiency. The convergence of the law of \(\mathbf{X}_t\) toward \(\pi\) determines the duration of the transient or burn-in regime, whereas \(\sigma_f^2\) determines the statistical efficiency of long-time averages. A third consideration arises after discretization: different dynamics may require substantially different computational costs per time step and induce a bias. A standard choice to define a process \((\mathbf{X}_t)_{t\ge0}\) ergodic w.r.t \(\pi = Z^{-1} e^{-V}\), where \(V\) is a given smooth potential on \(\textbf{x}\in\mathbb R^d\) and \(Z\) is a normalization constant, is the overdamped Langevin dynamics, which are also reversible w.r.t \(\pi\),
\begin{equation}
    d\mathbf{X}_t = -\nabla V(\mathbf{X}_t) \, dt + \sqrt{2} \, d\mathbf{W}_t,
    \label{OLD}
\end{equation}
where \(\mathbf{W}_t\) is a standard \(d\)-dimensional Wiener process. Often the underdamped Langevin dynamics, where one also considers the momentum variable \(P_t\), is used~\cite{CKLP_2023}
\begin{equation}
    \begin{cases}
        d\mathbf{X}_t = \mathbf{P}_t\,dt,\\
        d\mathbf{P}_t = - \nabla V(\mathbf{X}_t)\,dt - \Gamma \mathbf{P}_t \, dt + \sqrt{2 \Gamma} \, d\mathbf{W}_t,
    \end{cases} \label{ULD}
\end{equation}
where \(\Gamma\) is a positive definite friction matrix. The density of the invariant distribution of the process \eqref{ULD} is proportional to \( \exp\left( -V(\mathbf{x})-\frac{1}{2}\lvert\mathbf{p}\rvert^2 \right)\). In particular, the marginal in the position variable corresponds to the target measure \(\pi\). Several different Langevin-based dynamics that can be used for sampling from $\pi(x)$ can be found in~\cite{Duncan2017}. 

A major issue for the design of an efficient sampler based on Langevin dynamics \eqref{OLD} or \eqref{ULD} is the rate of convergence of \(\pi_t \to \pi\) as \(t\to \infty\), which dictates the length of the time interval length considered for discretizing the dynamics.
Precisely, the rate of convergence to equilibrium of the continuous-time dynamics \eqref{OLD}, when measured in the $\chi^2$ divergence and various other norms, is governed by the Poincar\'{e} constant of the generator of the Langevin dynamics. Indeed, under technical assumptions, one can show that the density \(\pi_t\) of the distribution of the process \(X_t\) satisfies 
\begin{equation}
    \lVert \pi_t - \pi \rVert_{L^2(\pi^{-1})} \leq e^{-\lambda t } \lVert \pi_0 - \pi  \rVert_{L^2(\pi^{-1})}, \label{Exponential decay}
\end{equation}
where $\lambda$ is the spectral gap in $L^2(\pi)$ of the generator of the dynamics \eqref{OLD}. We assume in this paper that the initial distribution \(\pi_0\) is in the weighted Hilbert space \( L^2(\pi^{-1})\) or one can consider for simplicity a deterministic initial condition \(\textbf{X}_0 = \textbf{x} \in \mathbb R^d\). In favorable cases, \(\lambda\) is a constant of size one and a relatively short time interval can be considered. However, in many situations, the size of \(\lambda\) can be very small, which then requires a dramatically long time interval length \(T =O( 1/\lambda) \gg 1\). This issue arises in particular in multimodal distributions, where transitions between modes can be rare. Furthermore, it also appears in unimodal distributions when the potential exhibits a multiscale structure. 

Moreover, for such multiscale potential, an additional difficulty arises for the design of numerical sampler, where stiffness issues lead to a severe time step restriction for standard methods such as the simplest unadjusted Langevin algorithm (ULA). It corresponds to the Euler-Maruyama method applied to the overdamped Langevin dynamics \eqref{OLD},
\begin{equation}
    \mathbf{X}_{n+1} = \mathbf{X}_n-\nabla V(\mathbf{X}_n) h + \sqrt{2 h} \, \mathbf{\xi}_n, \quad \mathbf{\xi}_n \overset{i.i.d}{\sim} \mathcal{N}(0,I).
    \label{ULA}
\end{equation}
Hence, the design of more efficient numerical samplers is a very active research area and many advanced samplers have been derived not only using more efficient numerical integrators but also modifying the continuous dynamics itself.

A well studied modification of the overdamped Langevin dynamics \eqref{OLD}, proposed in \cite{AccGausHwang93} and further analyzed in the Gaussian case in \cite{NRPLelievre2013}, is to add a nonreversible drift perturbation \(g: \mathbb{R}^d \to \mathbb{R}^d \) to the equation \eqref{OLD},
\begin{equation}
d\mathbf{X}_t =  -\nabla V \left( \mathbf{X}_t \right) \,dt + g\left( \mathbf{X}_t \right) \, dt + \sqrt{2} \, d\mathbf W_t. \label{nonreversible SDE} 
\end{equation}
In order to preserve the invariant distribution of the original dynamics \eqref{OLD}, \(g\) is chosen divergence--free with respect to the invariant distribution. That is
\begin{equation}
    \nabla \cdot(g\, \pi) = 0. \label{g divergence free}
\end{equation}
A natural choice of perturbation $g$ satisfying the above divergence-free condition is \( g(\mathbf{x}) = J \nabla V(\mathbf{x}) \), where \(J\in \mathbb R^{d\times d}\) is a skew symmetric matrix. In this case, the perturbation $g(\textbf{X}_t)\,dt$ corresponds to a Hamiltonian-like dynamics whose flow preserves both the potential \(V\), since \(g\cdot\nabla V=0 \), and the volume, since \(\nabla \cdot g = 0\).

Under a careful choice of \(J\), the nonreversible dynamics \eqref{nonreversible SDE} converges faster to equilibrium with new efficient samplers and analysis proposed in \cite{Duncan2017}, \cite{OptGaussWu14} and \cite{zhu2026optimizingirreversibleperturbationsunadjusted}. Moreover, one can also optimize \(J\) to reduce the asymptotic variance when interested in a specific observable \(f\) as presented in  \cite{VarreducNRP16}. Building on these theoretical advantages, these nonreversible dynamics have recently found applications in generative diffusion models as presented in \cite{lu2026steeringdynamicalregimesdiffusion}. Note that other choices of drift perturbation have been studied, see for instance \cite[Section~2]{NRPSplittingDuncan17} and \cite{enhancFlow}. However, such nonreversible perturbations also have mathematical and numerical drawbacks. In particular, the exponential decay \eqref{Exponential decay} becomes
\begin{equation}
    \lVert \pi_t^g - \pi \rVert_{L^2(\pi^{-1})} \leq C(V,g)\: e^{- \tilde \lambda t } \lVert \pi_0 - \pi  \rVert_{L^2(\pi^{-1})}, \label{Exponential decay NRP}
\end{equation}
where \(\pi_t^g\) denotes the distribution of the process \eqref{nonreversible SDE} and \(C(V,g)\) is a constant in time that depends on the potential and the perturbation and can possibly be very large. While one improves the spectral gap \(\lambda\) with a larger value \(\tilde \lambda\), the constant \(C\) in \eqref{Exponential decay NRP} worsens the transient phase in the convergence to equilibrium. Moreover, improving the spectral gap does not necessarily simultaneously optimize the asymptotic variance reduction and the convergence rate to equilibrium as studied in~\cite{VarreducNRP16}.

To preserve the reversibility of the perturbed dynamics, another widely studied approach is the preconditioned Langevin dynamics. Let \(M(\mathbf{x}) = D(\mathbf{x})D(\mathbf{x})^T\) be a matrix valued function for a diffusion tensor \(D(x)\) satisfying technical assumptions. The preconditioned Langevin dynamics is given by~\cite{OptidiffuLangevinLe26, Pavl2014, ayala2025reversibility}
\begin{equation}
    d\mathbf{X}_t = - M(\mathbf{X}_t) \nabla V(\mathbf{X}_t) \, dt + \nabla \cdot M(\mathbf{X}_t) \,dt + \sqrt 2D(\mathbf{X}_t)d\mathbf{W}_t. \label{Prec}
\end{equation}
The divergence correction term \(\nabla \cdot M = (\nabla \cdot M_{i,:})_{i=1}^d\) is necessary to preserve \(\pi\) when the preconditioner depends on the position. The role of \(M\) is to modify the local geometry of the dynamics. Directions in which the target varies strongly can be rescaled differently from directions in which it varies weakly, potentially reducing anisotropy and stiffness.
The preconditioned Langevin dynamics~\eqref{Prec} have been widely studied, and the recent works \cite{OptidiffuLangevinLe26} and \cite{cui2025optimalriemannianmetricpoincare} provide an algorithm to compute the optimal preconditioner in the non-convex case. However, while the convergence rate can be significantly improved, computing the optimal preconditioner remains difficult in practice. When \(V\) is strictly convex, it has been known for a long time, and it follows directly from the Brascamp-Lieb inequality~\cite{Helffer1998}, that the  optimal preconditioner studied, up to a normalization constant, is the inverse of the Hessian, $M(x)~=\left(\nabla^2V(x)\right)^{-1}$; see \cite{State_depFrictionULDTaoMolei25} for a recent study.

However, in many cases, \(M\) and its divergence \(\nabla \cdot M\) are not easily accessible and are computationally expensive to approximate numerically. Furthermore, even though numerical methods were investigated to reduce the computational cost, see for instance \cite{MetropolisDiffuRabee14},  simulating \eqref{Prec} is still much more expensive than \eqref{OLD} and \eqref{nonreversible SDE}. Recent works aimed at avoiding the computation of an expensive preconditioner that requires the calculation of gradients include \cite{SamAdamBen25} and its extension in \cite{SAPALBen26}, where adaptive time steps and a varying diffusion tensor are used.

Towards the best of both worlds of preserving the reversibility of the dynamics and avoiding divergence terms in the dynamics, an intermediate construction is presented in \cite{StraoAbd19}. The idea is to use a one dimensional Stratonovich noise perturbation instead of a drift perturbation. Let \( \beta_t \) be a one dimensional Wiener process independent of the \(d\)-dimensional Wiener process \(W_t\). The dynamics with the Stratonovich perturbation are given by
\begin{equation}
    d\mathbf{X}_t =  -\nabla V \left( \mathbf{X}_t \right) dt + \sqrt{2}\,g\left( \mathbf{X}_t \right) \circ d\beta_t + \sqrt{2} \, d\mathbf W_t. \label{Stratonovich SDE}
\end{equation}
Interestingly, when \(g\) is divergence free w.r.t \(\pi\), i.e. satisfying \eqref{g divergence free}, we can think of \eqref{Stratonovich SDE} as a particular case of \eqref{Prec}, where the effective diffusion tensor \(M\) and coefficient matrix \(D\) are given by
\[
M(\mathbf{x}) = I + g(\mathbf{x}) g(\mathbf{x})^T \in \mathbb R ^{d\times d}, \quad D(\mathbf{x}) = \begin{pmatrix}
    I&g(\mathbf{x})
\end{pmatrix}\in\mathbb R ^{d\times d+1}.
\]
Notably, another link between \eqref{nonreversible SDE} and \eqref{Prec} was introduced in \cite{Geom_infNRPZhang22}. The authors built a space dependent nonreversible perturbation which takes into account the preconditioner \(M\).
%
%
\subsection*{Our contributions}

The aim of this paper is to understand how to build a Stratonovich perturbation to the Langevin dynamics of the form \eqref{Stratonovich SDE} that quantitatively improves the convergence rate. We provide a theoretical analysis in the case where the potential \( V\) is quadratic, $ V(\mathbf x) = \frac{\mathbf x^T S \mathbf x}{2}$, where \(S\) is a real symmetric positive definite matrix.
In particular, we develop an algorithm for constructing Stratonovich perturbations improving the spectral gap. Our main contribution can be summarized in the following theorem.
\begin{theorem}[Quantitative convergence rates] 
Consider the dynamics
\begin{equation}
    d\mathbf{X}_t =  -S  \mathbf{X}_t\, dt + \sqrt{2}\, J S \, \mathbf{X}_t  \circ  d\beta_t + \sqrt{2} \, d\mathbf W_t. \label{Stratonovich quadratic SDE}
\end{equation}
Given a symmetric positive definite matrix \(S \in \mathbb R^{d\times d}\), there exists a constructive algorithm for building a skew-symmetric matrix \(J_d\) so that the spectral gap of the generator associated with the dynamics \eqref{Stratonovich quadratic SDE} is bounded below by an explicit constant \(\gamma_d \). We have
    \[
    \lVert \pi_t - \pi \rVert_{L^2(\pi^{-1})} \leq e^{-\gamma_d t } \lVert \pi_0 - \pi  \rVert_{L^2(\pi^{-1})}.
    \]
 In particular, in the two dimensional case, when
    \[
    S_2 = \begin{pmatrix}
        1 & 0 \\ 0 & \lambda
    \end{pmatrix}, \: J_2 = \frac{1}{\sqrt \lambda} \begin{pmatrix}
       0 & 1\\ -1 & 0
    \end{pmatrix}, \: 0<\lambda \ll 1,
    \]
    the convergence rate of the Stratonovich dynamics \eqref{Stratonovich quadratic SDE} induced by \(J_2\) is at least \(\gamma_2 = 3 - \sqrt{5}\).
\end{theorem}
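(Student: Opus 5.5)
The plan is to exploit the fact that the generator of \eqref{Stratonovich quadratic SDE} is self-adjoint in $L^2(\pi)$ and leaves polynomial spaces invariant. This reduces the spectral gap to a family of finite-dimensional eigenvalue problems. Concretely, with $A=JS$ the generator is
\[
\mathcal L f=-S\mathbf x\cdot\nabla f+\Delta f+(A\mathbf x\cdot\nabla)^2 f .
\]

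\textbf{Step 1 (reversibility).} Since $J$ is skew-symmetric, we have $\nabla\cdot(A\mathbf x)=\operatorname{tr}(JS)=0$ and $A\mathbf x\cdot S\mathbf x=0$. Hence $g=A\mathbf x$ satisfies \eqref{g divergence free}, and a direct computation gives
\[
\mathcal L f=\pi^{-1}\nabla\cdot\bigl(\pi(I+gg^T)\nabla f\bigr).
\]
So $\mathcal L$ is symmetric and nonpositive in $L^2(\pi)$, with Dirichlet form
\[
\mathcal E(f)=\int \bigl(|\nabla f|^2+(g\cdot\nabla f)^2\bigr)\,d\pi .
\]
The decay estimate in the theorem then follows from the spectral theorem once we show $\mathcal E(f)\ge\gamma_d\|f-\pi(f)\|^2_{L^2(\pi)}$.

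\textbf{Step 2 (spectral reduction).} Each of the three terms of $\mathcal L$ maps polynomials of degree $\le n$ into themselves; for $(A\mathbf x\cdot\nabla)^2$ this holds because $A\mathbf x\cdot\nabla$ preserves homogeneous degree. Hence the finite-dimensional spaces $\mathcal P_n\ominus\mathcal P_{n-1}$ (Hermite-type chaoses for $\pi=\mathcal N(0,S^{-1})$) are invariant under the self-adjoint $\mathcal L$. Polynomials are dense in $L^2(\pi)$, so the spectral gap is the infimum over $n\ge1$ of the smallest eigenvalue of $-\mathcal L$ restricted to these spaces.

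On linear functions $f=a\cdot\mathbf x$ one finds $\mathcal Lf=-(S-A^2)^Ta\cdot\mathbf x$. The degree-one eigenvalues are therefore those of $S-JSJS=S+A^TA$, which is where the perturbation helps: $J$ is chosen so that $A^TA$ lifts the small eigenvalues of $S$.

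\textbf{Step 3 (the 2D case).} Take $S_2=\operatorname{diag}(1,\lambda)$ and $J_2=\lambda^{-1/2}\begin{pmatrix}0&1\\-1&0\end{pmatrix}$. Then $A^2=-I$, so on degree one $S+A^TA=\operatorname{diag}(2,1+\lambda)$ and the gap there is at least $1$.

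In the coordinates $\mathbf y=S^{1/2}\mathbf x$ the field $g\cdot\nabla$ becomes the angular derivative $\partial_\theta$. Consequently, on each degree-$n$ block the term $-\partial_\theta^2$ contributes $k^2$ on angular Fourier mode $k$, while the Ornstein–Uhlenbeck part contributes its (anisotropic) eigenvalues. I would write the restricted matrices explicitly in the $\mathbf y$-Hermite basis. Then I would show the following:
\begin{itemize}
\item For $n\ge3$, the lower bound $\ge1$ follows crudely, because nonzero angular modes gain at least $1$ and the radial ($k=0$) modes inherit the OU gap of the isotropic-like part.
\item The degree-two block is the critical one. There I expect a $2\times2$ subsystem coupling the $k=0$ and $|k|=2$ quadratic modes whose characteristic polynomial is, uniformly as $\lambda\to0$, bounded below by something like $\mu^2-6\mu+4$. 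Its smaller root is $3-\sqrt5$.
\end{itemize}

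\textbf{Step 4 (general $d$).} I would diagonalize $S$ and pair each small eigenvalue with a large one, putting a $2\times2$ block of the above form (scaled by the inverse square root of the small eigenvalue) on each pair. Each pair then gives rise to a tensor factor handled by Step 3, unpaired directions keep their OU gap, and the gap is the minimum over factors, giving an explicit $\gamma_d$. Alternatively, the construction can be done recursively.

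The main obstacle is Step 3 for $n\ge2$: obtaining a bound on all higher-degree blocks that is uniform in $n$ and in $\lambda$. The reason is that the OU part is strongly anisotropic while the rotation part is isotropic, so the two do not commute. I would first try to compute the degree-two block exactly and then dominate higher degrees via the Dirichlet form, using $\mathcal E(f)\ge\int|\nabla f|^2d\pi$ together with $\int(\partial_\theta f)^2d\pi$.
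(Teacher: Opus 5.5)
Your Steps 1--2 and your degree-two prediction are sound. In $\mathbf y=S^{1/2}\mathbf x$ one has $-\mathcal R=E_1+\lambda E_2-\partial_\theta^2$ on each chaos $\mathcal H_n$, with $E_i$ the one-dimensional number operators. On the even part of $\mathcal H_2$ this is $\begin{pmatrix}1+\lambda&1-\lambda\\1-\lambda&5+\lambda\end{pmatrix}$, whose smallest eigenvalue $3+\lambda-\sqrt{5-2\lambda+\lambda^2}$ is the paper's $\Lambda(2M)$. (Small slip: the degree-one matrix is $S-(JS)^2$, which is similar to $S+\tilde A^T\tilde A$ with $\tilde A=S^{1/2}JS^{1/2}$, not to $S+A^TA$. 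Here $A^TA=\operatorname{diag}(\lambda^{-1},\lambda)$, and your $\operatorname{diag}(2,1+\lambda)$ is $S-(JS)^2$.)

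\textbf{Gap in 2D.} The genuine gap is the bound on the even chaoses $\mathcal H_{2m}$, $m\ge2$, uniformly in $m$ and $\lambda$. This is the heart of the paper's proof, and the ``crude'' argument in your first bullet does not provide it. The anisotropic part $E_1=\tfrac n2+\tfrac12(E_1-E_2)$ couples angular modes $k$ and $k\pm2$. The coupling of the radial vector $\rho$ to the $|k|=2$ modes is $\|(E_1-\tfrac n2)\rho\|=\sqrt{m(m+1)/2}$ for $n=2m$, which grows linearly in $n$, whereas $-\partial_\theta^2$ contributes only $4$ on those modes. Nothing in ``radial modes keep the OU gap, nonzero modes gain at least $1$'' distinguishes $n=2$ from $n=4,6,\dots$. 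On $\mathcal H_2$ the same reasoning would predict a bound $\ge1$, while the truth is $3-\sqrt5<1$ precisely because of this coupling.

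The paper closes this step in three moves: it reduces to $\lambda=0$ by monotonicity, writes the even block as the Jacobi matrix $T_{2n}$, and applies Gershgorin after a diagonal similarity with the weights $\eta_n(k)$ (appendix bounds $\ge 9/11$ and $\ge5/2$). Within your framework a shorter route exists. On $\mathcal H_{2m}$ one has $-\mathcal R\succeq E_1+4(I-P_\rho)$, with $P_\rho$ the projection onto $\rho$. The bottom eigenvalue $\mu$ of the right-hand side solves $\sum_j w_j/(j+4-\mu)=\tfrac14$, where $w_j=|\langle H_{(j,2m-j)},\rho\rangle|^2$, and the left side is increasing for $\mu<4$. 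Since $w_j=0$ for odd $j$ and $w_0=\binom{2m}{m}4^{-m}\le\tfrac12$, the left side at $\mu=3-\sqrt5$ is at most $\frac{w_0}{1+\sqrt5}+\frac{1-w_0}{3+\sqrt5}\le\tfrac14$. Hence $\mu\ge3-\sqrt5$ for all $m$. Odd chaoses are immediate, since $-\partial_\theta^2\ge1$ there. Some argument of this kind must be supplied.

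\textbf{Gap in general $d$.} Step 4 fails as stated because the SDE has a single scalar noise $\beta_t$. With the pairwise-matching $J$, $S^{1/2}JS^{1/2}$ is the standard symplectic matrix. So the perturbation part of $-\mathcal R$ is $\hat\delta^2\bigl(\sum_p\partial_{\theta_p}\bigr)^2$, with $\partial_{\theta_p}$ the angular derivative in the $p$-th pair of $\mathbf y$-coordinates, and not $\hat\delta^2\sum_p\partial_{\theta_p}^2$. The cross terms $\partial_{\theta_p}\partial_{\theta_q}$ destroy the tensor-product structure in two ways:
\begin{itemize}
\item $\ker\mathcal A$ contains mixed invariants $\mathrm{Re}(z_p\bar z_q)$, $\mathrm{Im}(z_p\bar z_q)$ that no single-pair rotation fixes.
\item The perturbation parts of the single-noise and independent-noise Dirichlet forms are not comparable in either direction: $\mathrm{Re}(z_p\bar z_q)$ gives $0$ versus $2$, and $\mathrm{Re}(z_pz_q)$ gives $4$ versus $2$.
\end{itemize}
So ``gap $=$ minimum over factors'' describes the multi-noise dynamics of the splitting section, not the theorem's SDE.

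For one noise, the paper instead splits each chaos as $\mathcal H_k=A^k\oplus B^k$ with $A^k=\ker\mathcal A\cap\mathcal H_k$. On $A^{2k}$, averaging over the common rotation gives $\bar E_i=\tfrac12(E_i+E_{d+1-i})$ and hence $R\ge k\gamma_{\min}$. On $B^k$, $\sigma(\mathcal A)\subset\mathrm i\mathbb Z$ gives $\mathcal A^{\#}\mathcal A\ge1$. The cross term $2\langle g,\nabla^{\#}\nabla h\rangle$ is then absorbed by Cauchy--Schwarz, but only once $\hat\delta^2\gtrsim s_1^2/(\gamma_{\min}s_d)$. This yields $\gamma_d=\gamma_{\min}/2$, and $\gamma_{\min}$ in the limit $\hat\delta\to\infty$.
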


We also investigate numerically the practical advantages of the Stratonovich-perturbed sampler in comparison with the existing Langevin-based samplers in Section \ref{Numerical experiment}. Our numerical experiments suggest that the acceleration of the convergence rate also holds for non-Gaussian target distributions.
\subsection*{Notation used in the paper}

We gather here the main notation used throughout the document. Unless otherwise specified, all functions are assumed to have real values, and all vectors are column vectors in $\mathbb{R}^d$. We write vectors with bold letters (e.g. $\mathbf{x} \in \mathbb{R}^d$) and uppercase letters (e.g. $B \in \mathbb{R}^{d \times d}$) denote matrices. In particular, we denote \(I\) the identity matrix.

\textbf{Matrix set:} We denote the set of symmetric positive definite matrices in $\mathbb{R}^{d \times d}$ by $\mathcal{S}_d^{>0}(\mathbb{R})$ and the set of real antisymmetric matrices is denoted by $\mathcal{A}_d(\mathbb{R})$.

\textbf{Spectrum and smallest eigenvalue:} let $B \in \mathbb{R}^{d \times d}$ be a matrix with real eigenvalues. We denote its spectrum by \(\sigma(B)\) and its smallest eigenvalue \(\Lambda(B)\). If \(B\) is symmetric, the smallest eigenvalue is given by the \emph{Rayleigh quotient}
\[
\inf_{\mathbf{x}\in \mathbb R^d \backslash{\mathbf{\{0\}}}} \frac{\inner{x}{Bx}}{\inner{x}{x}} = \Lambda(B).
\]

\textbf{Functional space:} we denote $\mathcal{P}$ the space of polynomials from \(\mathbb R^d\) to \(\mathbb R \) and \(\mathcal{P}^k\) the space of polynomials of degree at most \(k\).
\begin{definition}[Weighted $L^2$ Space with Respect to a Probability Measure]
Let $\pi$ be a probability measure on $\mathbb{R}^d$. \(L^2( \mathbb{R}^d,\pi)\) is the Hilbert space
\[
L^2( \mathbb{R}^d,\pi) \coloneqq \left\{ f : \mathbb{R}^d \rightarrow \mathbb{R} \: \bigg \lvert \int_{\mathbb{R}^d} |f(x)|^2 \, d\pi(x) < \infty \right\} \backslash \sim,
\]
where \(f\sim g \iff f = g \: a.s.\).
To simplify, we often write \( L^2(\pi)\). Let \(\mathcal{L} \) be an operator, we denote its adjoint in \(L^2(\mathbb{R}^d,d\mathbf{x})\) by \(\mathcal{L}^*\) and its adjoint in \(L^2(\mathbb{R}^d,\pi)\) by \(\mathcal{L}^{\#}\). 
\end{definition}
\begin{example}
    Consider \(\mathcal{L} = -\nabla^{\#} \nabla\), where \(\pi\) has density proportional to \(e^{-V}\) then we have
    \[
    \mathcal{L}\phi = (-\nabla V) \cdot \nabla \phi + \Delta \phi.
    \]
    On the other hand \(\tilde{\mathcal{L}} = -\nabla^{*} \nabla\) is simply the Laplacian \(\tilde{\mathcal{L}}\phi = \Delta\phi\).
\end{example}

\textbf{Probability:} We write \(\mathbb E_\mathbf{x}[f(\mathbf{X}_t)]\) to denote the expectation of \(f(\mathbf{X}_t)\) with the initial condition \(\mathbf{X}_0 = \mathbf{x}\).

\subsection*{Outline of this work}
    In Section \ref{nonreversible and reversible perturbations}, we recall existing results about the nonreversible perturbation and the Stratonovich perturbation. Then, we state the main results of this work in Section \ref{Spectral gap analysis}, which we prove in both Section \ref{Computational approach in dimension two} and Section \ref{Proof in a general dimension: Rayleigh estimation}. In Section \ref{Numerical experiment}, we demonstrate the practical effectiveness of the Stratonovich sampler \eqref{Stratonovich SDE} via numerical experiments. Conclusions are presented in Section~\ref{sec:conclusions}.
    
\section{Nonreversible and reversible perturbations}\label{nonreversible and reversible perturbations}

\subsection{Nonreversible perturbation to improve the spectral gap} \label{NR perturbations}

In this section, we summarize the main results and ideas from \cite{NRPLelievre2013}. Consider the perturbed dynamics,
\begin{equation*}
d\mathbf{X}_t = \left( -\nabla V \left( \mathbf{X}_t \right) + g\left( \mathbf{X}_t \right) \right) dt + \sqrt{2} \, d\mathbf W_t, \qquad \text{\eqref{nonreversible SDE}} 
\end{equation*}
where \(g\) is chosen divergence free with respect to the invariant distribution in order to preserve the invariant distribution. That is
\[
\nabla \cdot(g e^{-V}) = 0.
\]
A general method to construct a  divergence free perturbation is to define \( g = J \nabla V \), where \( J \) is a constant antisymmetric matrix. We can note that the generator of these dynamics can be written as 
\[
\mathcal{L}_g = -\nabla^{\#}\nabla + g \cdot \nabla = \mathcal{L} + \mathcal{A}_g.
\]
The original overdamped Langevin dynamics (corresponding to \(g=0\)) are reversible in the sense of Markov processes. This is reflected in the fact that their generator \(\mathcal L\) is self-adjoint in \(L^2(\pi)\). On the other hand, the perturbed generator \(\mathcal L_g\) of the dynamics \eqref{nonreversible SDE} is not self-adjoint in \(L^2(\pi)\). In fact, we have
\[
\mathcal L_g   ^{\#} =\mathcal L_{-g}.
\]
As a result of the non reversibility, the spectrum of the operator may include complex eigenvalues, which induce an oscillatory behavior.

Consider the quadratic case
\begin{equation}
    dX_t = -S\,\textbf{X}_t\,dt + JS\,\textbf{X}_t\,dt + \sqrt{2}\,d\textbf{W}_t, \label{NRP quadratic}
\end{equation}
where \( S\in \mathcal S_d^{>0}(\mathbb R) \) and \(J \in \mathcal{A}_d(\mathbb{R})\). In \cite{Metafune2002}, it is shown that the spectrum of the generator of the dynamics \eqref{NRP quadratic} in $L^p(\pi)$ for $p\in (1,\infty)$ is
\[
\sigma(\mathcal{L}_g) = \left\{ -\sum_{j=1}^{r} n_j \lambda_j \;\middle|\; n_j \in \mathbb{N}, \lambda_j \in \sigma(S) \right\},
\]
where $\{\lambda_j\}_{j=1}^{r}$ are the $r$ distinct eigenvalues of $S+JS$. See also~\cite{ OttobrePavliotisPravda12, OPP2015}. Note that the result in \cite{Metafune2002} implies that the spectral gap of the generator $\mathcal{L}$ is the smallest eigenvalue of \(S\). 
Hence, introducing an appropriate \(J\) allows to drastically improve the spectral gap, as stated by the following result.

\begin{theorem*} [\cite{NRPLelievre2013}] \label{Theorem NR}
    Define $B_J = (I + J)S$. Then
\begin{equation}
    \max_{J \in A_d(\mathbb{R})} \min \mathrm{Re} \ \sigma(B_J) = \frac{\mathrm{Tr}(S)}{d}. \label{19}
\end{equation}
 Furthermore, there is a constructive algorithm to construct matrices $J_{\text{opt}} \in A_d(\mathbb{R})$ such that the maximum in \eqref{19} is attained. The matrix $J_{\text{opt}}$ can be chosen so that the semigroup associated with $B_{J_{\text{opt}}}$ satisfies the bound
\[
\lVert e^{-(I + J_{\text{opt}})St} \rVert \leq \frac{C_d^{(1)}}{d} \kappa(S)^{1/2} \exp \left( - \frac{\mathrm{Tr}(S)}{d} t \right),
\]
for all $t \geq 0$, where the matrix norm is induced by the Euclidean norm on $\mathbb{R}^d$ and $\kappa(S) = \|S\| \|S^{-1}\|$ denotes the condition number.
\end{theorem*}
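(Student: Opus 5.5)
The proof has three parts: a trace argument for the upper bound, a similarity transformation that turns the problem into constructing a skew perturbation of $S$, and an orthonormal-basis construction that gives optimality and the semigroup bound.

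\textbf{Upper bound.} First I will show that no $J$ can beat $\mathrm{Tr}(S)/d$. For $J\in\mathcal A_d(\mathbb R)$ and $S$ symmetric, $\mathrm{Tr}(JS)=\mathrm{Tr}((JS)^T)=\mathrm{Tr}(SJ^T)=-\mathrm{Tr}(JS)$, so $\mathrm{Tr}(JS)=0$. Hence $\mathrm{Tr}(B_J)=\mathrm{Tr}(S)$. The sum of the real parts of the $d$ eigenvalues of $B_J$ (with multiplicity) equals $\mathrm{Tr}(S)$, so the smallest real part is at most $\mathrm{Tr}(S)/d$.

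\textbf{Reduction by similarity.} Next I pass to $S^{1/2}$ coordinates: $B_J=(I+J)S$ is similar to $S^{1/2}(I+J)S^{1/2}=S+A$ with $A=S^{1/2}JS^{1/2}$. The map $J\mapsto A$ is a bijection of $\mathcal A_d(\mathbb R)$. So it suffices to find a real skew $A$ such that every eigenvalue of $S+A$ has real part $m:=\mathrm{Tr}(S)/d$. For this I will use a unitary basis $(\psi_k)_{k=1}^d$ of $\mathbb C^d$ with $\langle \psi_k,S\psi_k\rangle=m$ for all $k$. The natural choice is the discrete Fourier basis $\psi_k=d^{-1/2}\sum_j \omega^{jk}\mathbf e_j$, where $\omega=e^{2i\pi/d}$ and $(\mathbf e_j)$ is an orthonormal eigenbasis of $S$. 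Then $\langle\psi_k,S\psi_k\rangle=\frac1d\sum_j\lambda_j=m$ automatically.

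I then define $A$ through its matrix in this basis. I impose that $S+A$ be upper triangular in the basis $(\psi_k)$: for $j>k$ set $\langle\psi_j,A\psi_k\rangle=-\langle\psi_j,S\psi_k\rangle$, and for $j<k$ the entry is fixed by skew-Hermitian symmetry. The diagonal entries $\langle\psi_k,A\psi_k\rangle$ are purely imaginary and are free parameters. Then $\sigma(S+A)=\{m+i\theta_k\}$, so every real part equals $m$. One must check that $A$ is real. The Fourier basis satisfies $\overline{\psi_k}=\psi_{d-k}$, so reality amounts to a compatibility between the entries at $(j,k)$ and $(d-j,d-k)$. I will pair indices $k\leftrightarrow d-k$ and order the basis accordingly, or alternatively use the real cosine/sine basis and work with $2\times2$ blocks. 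Choosing the $\theta_k$ pairwise distinct, in conjugate pairs, makes $S+A$ diagonalizable. This construction is the constructive algorithm, and undoing the similarity gives $J_{\text{opt}}=S^{-1/2}AS^{-1/2}$, which attains the maximum in \eqref{19}.

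\textbf{Semigroup bound.} I write $e^{-B_Jt}=S^{-1/2}e^{-(S+A)t}S^{1/2}$, whose norm is at most $\kappa(S)^{1/2}\,\|e^{-(S+A)t}\|$. Writing $S+A=P\,\mathrm{diag}(m+i\theta_k)\,P^{-1}$ gives $\|e^{-(S+A)t}\|\le \|P\|\|P^{-1}\|e^{-mt}$.

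\textbf{Main obstacle.} The hard step is controlling the eigenvector condition number $\|P\|\|P^{-1}\|$ by a constant $C_d^{(1)}/d$ that depends only on $d$ and not on $S$. Note that a nilpotent Jordan part would produce a polynomial factor in $t$, which is incompatible with the pure exponential bound; this is why the $\theta_k$ must be distinct. I would first try to scale the $\theta_k$ with the off-diagonal entries, e.g. $\theta_k=c\,\|S\|\,k$. Since the strictly upper part is bounded by $2\|S\|$ entrywise in the unitary basis, the triangular eigenvector matrix then has entries of order $\|S\|/(\theta_j-\theta_k)$, i.e.\ bounded by a combinatorial constant in $d$. Combined with the $\kappa(S)^{1/2}$ from the change of variables, this yields the stated estimate.
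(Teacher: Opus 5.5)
Your trace bound, the similarity reduction to $S+A$ with $A=S^{1/2}JS^{1/2}$, and the final conjugation producing $\kappa(S)^{1/2}$ are all correct. The gap is the reality of $A$, and no reordering of the Fourier basis fixes it.

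Work in a real orthonormal eigenbasis of $S$, with eigenvalues $s_l$, and write $\hat M_{jk}=\langle\psi_j,M\psi_k\rangle$, indices taken mod $d$. Since $\overline{\psi_k}=\psi_{-k}$, reality of $A$ forces $\hat A_{-k,k}=\overline{\hat A_{k,-k}}$. Skew-symmetry forces $\hat A_{-k,k}=-\overline{\hat A_{k,-k}}$. Hence $\hat A_{k,-k}=\hat A_{-k,k}=0$ whenever $k\not\equiv -k$.

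Whichever of $\psi_k,\psi_{-k}$ comes first in your ordering, one of these two entries lies below the diagonal. Your rule then sets it to $-\hat S_{-k,k}$ or $-\hat S_{k,-k}$, where $\hat S_{-k,k}=\frac1d\sum_l s_l\,\omega^{2lk}$ is generically nonzero. For example, with $d=3$ and $\sigma(S)=\{2,1,1\}$ it has modulus $1/3$. So for $d\ge 3$ the matrix you define is not real. The claims that the imaginary diagonal entries $i\theta_k$ are free and that $\sigma(S+A)=\{m+i\theta_k\}$ therefore fail.

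There is also a smaller issue for even $d$. The vectors $\psi_0$ and $\psi_{d/2}$ are both real, so both carry $\theta=0$ and the $\theta_k$ cannot be pairwise distinct. The resulting double eigenvalue $m$ can be defective, which is exactly the Jordan block you want to exclude.

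Your fallback of $2\times 2$ blocks is the right repair, but it has content you have not supplied. Make $S+A$ only block upper triangular with respect to the conjugate pairs $\{\psi_k,\psi_{-k}\}$; reality is then consistent, because $k\mapsto -k$ preserves each pair. When $d$ is even, group $\psi_0,\psi_{d/2}$ into one real block with a real skew entry $a$.

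Inside a pair the entry of $A$ is forced to vanish, so the diagonal block of $S+A$ is $\begin{pmatrix} m+i\theta_k & \hat S_{k,-k}\\ \hat S_{-k,k} & m-i\theta_k\end{pmatrix}$. Its eigenvalues are $m\pm\sqrt{|\hat S_{k,-k}|^2-\theta_k^2}$. You therefore need $|\theta_k|>|\hat S_{k,-k}|$; otherwise the block has a real eigenvalue below $m$ or a repeated eigenvalue $m$. The spectrum is then $m\pm i\sqrt{\theta_k^2-|\hat S_{k,-k}|^2}$, not $m\pm i\theta_k$. Likewise you need $|a|>|\hat S_{0,d/2}|$.

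Since $|\hat S_{k,-k}|\le\|S\|$, your scaling $\theta_k=c\|S\|k$ with $c$ large, and $a$ chosen alike, meets these constraints. It also keeps all eigenvalues $\gtrsim\|S\|$ apart. Your homogeneity argument then bounds the condition number of the block-triangular eigenvector matrix by a constant depending only on $d$.

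For comparison, the construction the paper quotes from \cite{NRPLelievre2013} avoids both issues. It uses a \emph{real} orthonormal basis satisfying \eqref{36}, so $\tilde J$ defined by \eqref{38} is automatically real and skew. In such a real basis, a triangular choice would put every eigenvalue at $m$ with a nonzero nilpotent part; the distinct eigenvalues $\lambda_k$ of $Q$ regularize this. Moreover, \eqref{36}--\eqref{37} are the entrywise form of $\tilde B_JQ+Q\tilde B_J^T=2\frac{\mathrm{Tr}(S)}{d}Q$. This yields $\|e^{-\tilde B_J t}\|\le\kappa(Q)^{1/2}e^{-\mathrm{Tr}(S)t/d}$ directly, with no eigenvector-conditioning estimate.
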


Next, we present a result which is essential to understand how the algorithm builds an optimal perturbation. We cite two equivalent conditions, but more are presented in \cite[Section~3]{NRPLelievre2013}.

\begin{proposition*}[\cite{NRPLelievre2013}]
    Assume that $J \in \mathcal{A}_d(\mathbb{R})$, define \(\tilde J = S^{1/2} JS^{1/2} \) and assume that $S \in \mathcal{S}^{>0}_d(\mathbb{R})$. Given a matrix $Q \in S^{>0}_d(\mathbb{R})$, we denote $\{\lambda_k\}_{k=1}^d$ the positive real eigenvalues of $Q$ (counted with multiplicity), and $\{\psi_k\}_{k=1}^d$ the associated eigenvectors, which form an orthonormal basis of $\mathbb{R}^d$.
    
Then the following conditions are equivalent
\begin{itemize}
    \item[(i)] The matrix $\tilde{B}_J = S + \tilde{J}$ is diagonalizable in $\mathbb{C}$ and its spectrum satisfies $\sigma(\tilde{B}_J) \subset \frac{\text{Tr}(S)}{d} + i\mathbb{R}$.
    \item[(ii)] There exists \(Q\in S^{>0}_d(\mathbb{R})\), such that for all $k \in \{1, \dots, d\}$,
\begin{equation}
(\psi_k, S \psi_k)_{\mathbb{R}} = \frac{\operatorname{Tr}(S)}{d}, \label{36}
\end{equation} 
and, for all $ j \neq k \in \{1, \dots, d\}$,
\begin{equation}
(\lambda_j - \lambda_k)(\psi_j, \tilde{J} \psi_k)_{\mathbb{R}} = (\lambda_k + \lambda_j)(\psi_j, S \psi_k)_{\mathbb{R}}. \label{37}
\end{equation}
    
\end{itemize}
\end{proposition*}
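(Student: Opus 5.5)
We plan to reformulate condition (ii) as a single matrix identity. Write $c \coloneqq \operatorname{Tr}(S)/d$ and work in the orthonormal eigenbasis $\{\psi_k\}$ of $Q$. Using $\tilde B_J = S+\tilde J$, $\tilde B_J^T = S - \tilde J$ (since $\tilde J = S^{1/2}JS^{1/2}$ is antisymmetric) and $Q\psi_k=\lambda_k\psi_k$, a direct computation gives, for all $j,k$,
\[
\bigl(\psi_j, (\tilde B_J Q + Q\tilde B_J^T)\psi_k\bigr)_{\mathbb R} = (\lambda_j+\lambda_k)(\psi_j,S\psi_k)_{\mathbb R} - (\lambda_j-\lambda_k)(\psi_j,\tilde J\psi_k)_{\mathbb R}.
\]
Hence the off-diagonal conditions \eqref{37} say exactly that this matrix is diagonal in the basis $\{\psi_k\}$. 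Its diagonal entries equal $2\lambda_k(\psi_k,S\psi_k)_{\mathbb R}$. Condition \eqref{36} then says they equal $2c\lambda_k = (\psi_k,2cQ\psi_k)_{\mathbb R}$. Conversely, if the matrix identity holds, taking entries in the eigenbasis of $Q$ gives \eqref{36} (divide by $\lambda_k>0$) and \eqref{37}. So (ii) is equivalent to the existence of $Q\in\mathcal S_d^{>0}(\mathbb R)$ with
\[
(\tilde B_J - cI)Q + Q(\tilde B_J - cI)^T = 0 .
\]

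\textbf{(ii) $\Rightarrow$ (i).} Set $A \coloneqq Q^{-1/2}(\tilde B_J - cI)Q^{1/2}$. Multiplying the identity on the left and right by $Q^{-1/2}$ gives $A + A^T = 0$, so $A$ is real antisymmetric. An antisymmetric matrix is normal, hence unitarily diagonalizable over $\mathbb C$ with purely imaginary spectrum. Since $\tilde B_J = cI + Q^{1/2} A Q^{-1/2}$ is similar to $cI + A$, it is diagonalizable in $\mathbb C$ with $\sigma(\tilde B_J)\subset c + i\mathbb R$.

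\textbf{(i) $\Rightarrow$ (ii).} The matrix $N\coloneqq\tilde B_J - cI$ is real, diagonalizable over $\mathbb C$, and has purely imaginary eigenvalues, which occur in conjugate pairs $\pm i b_\ell$ together with possibly $0$. Its real canonical form therefore provides an invertible real $P$ with $P^{-1}NP = A$. Here $A$ is block diagonal with $2\times 2$ blocks $b_\ell\begin{pmatrix}0&1\\-1&0\end{pmatrix}$ and zero $1\times1$ blocks, so $A$ is antisymmetric. Set $Q\coloneqq PP^T\in\mathcal S^{>0}_d(\mathbb R)$. Then
\[
NQ + QN^T = P(A + A^T)P^T = 0.
\]
By the reformulation in the first paragraph, this gives (ii) with $\{\lambda_k,\psi_k\}$ the eigenpairs of this $Q$. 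The consistency of the value $c$ is automatic, since $\operatorname{Tr}\tilde B_J = \operatorname{Tr}S$.

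The only delicate point is the (i) $\Rightarrow$ (ii) construction. Diagonalizability is essential there: a nontrivial Jordan block would prevent $N$ from being similar to a real antisymmetric matrix. We also need the real canonical form, rather than a complex diagonalization, so that $Q$ is real symmetric positive definite. The remaining steps are routine entrywise bookkeeping.
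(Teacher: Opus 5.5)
Your proof is correct and complete. The paper itself gives no proof of this proposition: it quotes the result from \cite{NRPLelievre2013} and only remarks that further equivalent conditions appear there, so there is no in-text argument to compare against, and your proof stands on its own.

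Your key step is to read \eqref{36}--\eqref{37}, entrywise in an orthonormal eigenbasis of $Q$, as the single basis-free identity $(\tilde B_J - cI)Q + Q(\tilde B_J - cI)^T = 0$. This says that $\tilde B_J - cI$ is skew-adjoint for the inner product $(u, Q^{-1}v)_{\mathbb R}$. You then pass to a real antisymmetric matrix in both directions: via the similarity by $Q^{\pm 1/2}$ for (ii) $\Rightarrow$ (i), and via the real canonical form with $Q = PP^T$ for (i) $\Rightarrow$ (ii). This is the natural self-contained argument, and each step checks out.

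Three small observations:
\begin{itemize}
\item Because the identity is basis-free, the non-uniqueness of $\{\psi_k\}$ when $Q$ has repeated eigenvalues causes no trouble, since any orthonormal eigenbasis works.
\item Your closing remark that $\operatorname{Tr}\tilde B_J = \operatorname{Tr} S$ is not needed. The value $c$ is a hypothesis in (i), and (ii) $\Rightarrow$ (i) produces it directly.
\item The argument uses only that $S$ is symmetric and $\tilde J$ antisymmetric; positivity of $S$ plays no role.
\end{itemize}
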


When the eigenvalues of $Q$ all have multiplicity one, $\tilde{J}$ is completely determined by \eqref{37}
\begin{equation}
\psi_j^T \tilde{J} \psi_k = -\frac{\lambda_k + \lambda_j}{\lambda_k - \lambda_j} \psi_j^T S \psi_k , \text{ for all } j\neq k \in \{1,\dots,d \}.  \label{38}
\end{equation} 
Indeed, by the antisymmetry of $\tilde{J}$, the remaining entries are zero
\begin{equation}
\psi_j^T \tilde{J} \psi_j= 0 \quad \text{for all } j \in \{1, \dots, d\}.
\end{equation}
The only issue is to build an appropriate basis, but this task is fulfilled by the algorithm presented in \cite[Section~3]{NRPLelievre2013}. The idea is to equilibrate the action of \(S\) on an arbitrary basis by iteratively pairing small and large modes and using Gram-Schmidt to keep the basis orthonormal.

To summarize, we can construct a basis $\{\psi_k\}_{k=1}^d$ such that \eqref{36} is fulfilled. Then, we can arbitrarily choose positive and distinct eigenvalues $\{\lambda_k\}_{k=1}^d$. Finally, we can construct a matrix $\tilde{J}$ so that $S^{-1/2}\tilde{J}S^{-1/2}$ is an optimal perturbation in the sense that it maximizes the spectral gap.

Note that it is not entirely clear how the choice of the eigenvalues \(\{\lambda_k\}_{k=1}^d\) affects the imaginary part of the spectrum of \(\mathcal L_g^*\). Moreover, this construction involves using repetitively Gram-Schmidt and is costly.

\subsection{Reversible perturbation to improve the spectral gap} \label{R Perturbation}

While adding a nonreversible perturbation can improve the convergence to equilibrium, it also induces an oscillatory and transient behavior. In \cite{StraoAbd19},  it is proposed is to use a one dimensional Stratonovich noise instead of perturbing the drift directly. Consider the dynamics \eqref{Stratonovich SDE}
\begin{equation*}
    d\mathbf{X}_t =  -\nabla V \left( \mathbf{X}_t \right) dt + \sqrt{2}\, g\left( \mathbf{X}_t \right) \circ d\beta_t + \sqrt{2} \, d\mathbf W_t. \qquad \text{\eqref{Stratonovich SDE}}
\end{equation*}
\begin{theorem*}[\cite{StraoAbd19}]
    The generator of \eqref{Stratonovich SDE} is
    \[
    \mathcal{R}= \mathcal{L} + \mathcal{A}_g^2,
    \]
    where \(\mathcal{L} = -\nabla V \cdot \nabla + \Delta\) is the generator of \eqref{OLD} and \(\mathcal {A}_g  = g \cdot \nabla\) is the generator of the perturbation. If \(g\) is divergence free under \(\pi\), i.e \(\nabla \cdot (g \,\pi) =0\), then \(\mathcal{R}\) is self-adjoint in \(L^2(\pi)\) which is equivalent to the fact that the dynamics \eqref{Stratonovich SDE} is reversible w.r.t \(\pi\). Moreover,
    \[
    \lambda_\mathcal{R} \ge \lambda_\mathcal{L}, \text{ and }  \sigma_\mathcal{R} \le \sigma_\mathcal{L},
    \]
    where \(\lambda_{(\cdot)}\) is the spectral gap and \(\sigma_{(\cdot)}\) is the asymptotic variance of the corresponding generator.
\end{theorem*}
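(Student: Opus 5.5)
The plan is to compute the generator directly from the Stratonovich calculus, then identify the perturbation as an $L^2(\pi)$-antisymmetric operator. Both comparison statements will then follow from a comparison of Dirichlet forms.

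\textbf{Step 1: the generator.} Write \eqref{Stratonovich SDE} with three independent driving terms: the drift $-\nabla V$, the vector fields $\sqrt2\,e_i$ driven by $W^i_t$, and $\sqrt2\, g$ driven by $\beta_t$. For an SDE $d\mathbf X_t = b\,dt + \sum_k \sigma_k(\mathbf X_t)\circ dB^k_t$, the generator is $b\cdot\nabla + \frac12\sum_k (\sigma_k\cdot\nabla)^2$. This is the usual Stratonovich-to-Itô conversion: the Itô correction $\frac12 (\sigma_k\cdot\nabla)\sigma_k$ combines with the second-order term into the square of the vector field. For the constant fields $\sqrt2 e_i$ the squares give $\Delta$. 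For $\sqrt 2 g$ we get $\frac12\cdot 2\,(g\cdot\nabla)^2=\mathcal A_g^2$. Hence $\mathcal R=\mathcal L+\mathcal A_g^2$.

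\textbf{Step 2: antisymmetry and self-adjointness.} For smooth compactly supported $f,h$, integrate by parts:
\[
\int (g\cdot\nabla f)\, h\,\pi\,d\mathbf x = -\int f\,\nabla\cdot(h\,g\,\pi)\,d\mathbf x = -\int f\,(g\cdot\nabla h)\,\pi\,d\mathbf x - \int f h\,\nabla\cdot(g\pi)\,d\mathbf x .
\]
The last term vanishes by \eqref{g divergence free}, so $\mathcal A_g^{\#}=-\mathcal A_g$. Therefore $\mathcal A_g^2=-\mathcal A_g^{\#}\mathcal A_g$ is symmetric and nonpositive. Since $\mathcal L=-\nabla^{\#}\nabla$ is self-adjoint, $\mathcal R = -(\nabla^{\#}\nabla+\mathcal A_g^{\#}\mathcal A_g)$ is symmetric in $L^2(\pi)$. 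Symmetry of the generator in $L^2(\pi)$ is equivalent to reversibility of the dynamics with respect to $\pi$. Invariance of $\pi$ also follows, since $\mathcal R^{\#}1=\mathcal R1=0$.

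\textbf{Step 3: the two comparisons.} The Dirichlet forms satisfy
\[
\mathcal E_{\mathcal R}(f)=\langle f,-\mathcal R f\rangle_\pi=\norm{\nabla f}^2_{L^2(\pi)}+\norm{\mathcal A_g f}^2_{L^2(\pi)}\ge\mathcal E_{\mathcal L}(f).
\]
Both operators annihilate constants. The spectral gaps are the infima of $\mathcal E(f)/\mathrm{Var}_\pi(f)$ over nonconstant $f$ (Rayleigh quotient), so $\lambda_{\mathcal R}\ge\lambda_{\mathcal L}$ follows immediately.

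For the asymptotic variance of a mean-zero observable $\bar f=f-\pi(f)$, use the Kipnis--Varadhan formula $\sigma^2_f=2\langle \bar f,(-\mathcal R)^{-1}\bar f\rangle_\pi$. This is well defined on the mean-zero subspace thanks to the Poincaré inequality inherited from $\mathcal L$. I would then invoke the variational representation
\[
\langle \bar f,(-\mathcal R)^{-1}\bar f\rangle_\pi=\sup_{\varphi}\big\{2\langle \bar f,\varphi\rangle_\pi-\mathcal E_{\mathcal R}(\varphi)\big\}.
\]
It makes the operator monotonicity of the inverse explicit: since $\mathcal E_{\mathcal R}\ge\mathcal E_{\mathcal L}$ pointwise, the supremum for $\mathcal R$ is at most the one for $\mathcal L$, which gives $\sigma_{\mathcal R}\le\sigma_{\mathcal L}$.

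\textbf{Main obstacle.} I expect the main difficulty to be functional-analytic rather than algebraic. One must justify that $C_c^\infty$ is a core, so that the symmetric operator $\mathcal R$ is essentially self-adjoint, the integrations by parts extend to the form domains, and the variational formulas hold on the closures. I would handle this by assuming $V$ and $g$ smooth with suitable growth, proving essential self-adjointness via the standard argument for symmetric diffusions with smooth coefficients (nonexplosion of the process), and working with closed quadratic forms throughout.
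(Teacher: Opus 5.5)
The paper gives no proof of this statement. It is quoted from \cite{StraoAbd19} as background, so there is no internal argument to compare yours with.

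Your proof is correct and follows the standard route. The Stratonovich-to-It\^o conversion gives $\mathcal{R}=\mathcal{L}+\mathcal{A}_g^2$. The condition $\nabla\cdot(g\pi)=0$ makes $\mathcal{A}_g$ antisymmetric in $L^2(\pi)$, hence $\langle f,-\mathcal{R}f\rangle_\pi=\norm{\nabla f}^2_{L^2(\pi)}+\norm{\mathcal{A}_g f}^2_{L^2(\pi)}$. Both comparisons then follow from this ordering of Dirichlet forms:
\begin{itemize}
\item the spectral gap inequality from the Rayleigh quotient;
\item the variance inequality from Kipnis--Varadhan together with the variational formula for $\langle\bar f,(-\mathcal{R})^{-1}\bar f\rangle_\pi$. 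That formula is monotonicity of the inverse, phrased for closed forms so that operator-domain issues are avoided.
\end{itemize}
The paper relies on exactly this structure later. The quotient $R_\delta$ in Section~\ref{Proof in a general dimension: Rayleigh estimation} is your $\mathcal{E}_{\mathcal{R}}$, and the bounds $\inf(A+B)\ge\inf A+\inf B$ and $-\mathcal{R}\succeq-\mathcal{L}$ used there are your Step~3.

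One correction to your closing paragraph: nonexplosion of the process does not by itself give essential self-adjointness on $C_c^\infty$. The usual sufficient criterion is the existence of cutoffs $\psi_k\in C_c^\infty$ with $\psi_k\uparrow 1$ and $\sup_{\mathbf{x}}\big(|\nabla\psi_k|^2+|g\cdot\nabla\psi_k|^2\big)\to 0$. When $g=J\nabla V$ you can take $\psi_k$ to be functions of $V$, e.g.\ $\chi(\log(1+V)/k)$. These are annihilated by $\mathcal{A}_g$, so under mild growth assumptions on $V$ the criterion reduces to the unperturbed one.
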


When \(g = J\nabla V\), the operator \( \mathcal{R} \) is self-adjoint in \( L^2(\pi) \), which implies that its spectrum is real. Consequently, the convergence to equilibrium remains non-oscillatory, in contrast to the nonreversible perturbation case where complex eigenvalues can induce oscillations (see Section \ref{Numerical experiment} for illustrations of this phenomenon).

A crucial observation that was made in~\cite{StraoAbd19} is that any perturbation of this form does not worsen the convergence. However, this article only gives qualitative improvements and does not quantify them. In particular, it is not investigated whether there is an optimal convergence rate and if we can build an optimal matrix \(J\).

\section{Spectral gap analysis}\label{Spectral gap analysis}
In this section, we present the main result concerning the analysis of the dynamics
\begin{equation*}
    d\mathbf{X}_t =  -S  \mathbf{X}_t\, dt + \sqrt{2}\, J S \, \mathbf{X}_t  \circ  d\beta_t + \sqrt{2} \, d\mathbf W_t, \qquad \text{\eqref{Stratonovich quadratic SDE}}
\end{equation*}
where \(S\) is a real symmetric positive definite matrix.
We start by exhibiting an \emph{optimal} scaling in two dimensions in Section \ref{2D Results}. The scaling is optimal in the following sense:

In two dimensions, the only liberty to build a perturbation defined by a skew-symmetric matrix \(J\) is via a scaling parameter. Indeed, all two dimensional skew-symmetric matrices are proportional to the symplectic matrix
    \[
    J_2 = \begin{pmatrix}
        0& 1 \\ -1 & 0
    \end{pmatrix}.
    \]
    From Section \ref{Computational approach in dimension two}, it appears that the best spectral gap is achieved when considering \(J = \delta \, J_2\) and taking \(\delta \to \infty\). This is not surprising in comparison with the analysis in \cite{NRPLelievre2013}, where they obtain a similar conclusion.
    
    For obvious reasons, taking \(\delta \to \infty\) is not feasible numerically. Moreover, the dynamics induced by the perturbation \(\dot z = \delta J_2\, \nabla V(z)\) is a Hamiltonian system, whose highly oscillatory behavior is difficult to handle in practice. Since the oscillatory behavior is strengthened when \(\delta\) is large, it is desirable to minimize the size of \(\delta\).
    
    For these reasons, we define the optimal scaling as the smallest \(\alpha >0\) such that the convergence rate of \eqref{Stratonovich quadratic SDE} with the skew-symmetric matrix  \(\delta J_2\), where \(\delta = \lambda^{-\alpha}\), remains bounded away from zero as \(\lambda \to0^+\).

    In Section \ref{Pairwise matching in a general dimension}, we consider a specific matrix \(J_d\) in a general dimension \(d\). By considering the scaling derived in two dimensions, we show that, asymptotically, the convergence rate remains bounded away from \(0\) under some hypothesis on the spectrum of \(S\). However, the convergence remains slow in some pathological examples. In order to address this issue, we propose another construction in Section \ref{Splitting eigenvalues in a general dimension}.

\subsection{Optimal scaling in dimension two} \label{2D Results}
We start by presenting the set up in two dimensions, then we present the main result of this section.

We consider the quadratic stiff potential
\[
V_\lambda(\mathbf{x}) = \frac{1}{2} (x_1^2 + \lambda x_2^2) = \frac{1}{2}
\textbf{x}^T \begin{pmatrix} 1 & 0 \\ 0 & \lambda \end{pmatrix} \textbf{x} =\frac{1}{2}  \textbf{x}^T S \textbf{x}, \quad 0<\lambda \ll 1.
\]
We introduce a Stratonovich perturbation of the form 
\[
g(\textbf{x}) = \delta  J \nabla V(\textbf{x}) = \delta  \begin{pmatrix} 0 & 1 \\ -1 & 0 \end{pmatrix} 
\begin{pmatrix} x_1 \\ \lambda x_2 \end{pmatrix} 
= \begin{pmatrix}  \lambda \delta x_2 \\ - \delta x_1 \end{pmatrix}.
\]
The resulting process is the solution of the following Itô SDE
\begin{equation}
    d\mathbf{X}_t  = -B  \mathbf{X}_t  dt +\delta J S \mathbf{X}_t \sqrt{2} d\beta_t + \sqrt{2} \, d\mathbf{W_t}, \label{Reversible SDE 2d}
\end{equation}
where $B = B_{\delta} = S + \delta^2 \lambda I = \begin{pmatrix}
    1 +  \delta^2 \lambda & 0 \\
    0 & (1 +  \delta^2) \lambda
\end{pmatrix}$.

    \begin{theorem} \label{Main Result 2d}
Let \(\lambda > 0\), and consider the perturbed stochastic differential equation~\eqref{Reversible SDE 2d} with parameters \(\delta = \lambda^{-1/2}\). Then, the spectral gap of the perturbed operator \(\mathcal{R}\) is at least \(\gamma = 3 - \sqrt{5}\). In particular, for all \(t \geq 0\), we have
\begin{equation}
        \lVert e^{t\mathcal{R}}\phi - \pi(\phi) \rVert_{L^2(\pi)} \leq  e^{-\gamma t} \: \lVert \phi - \pi(\phi) \rVert_{L^2(\pi)} , \: \text{ for all } \phi \in L^2(\pi). 
\end{equation}
\end{theorem}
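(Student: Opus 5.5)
The plan is to exploit that, for a quadratic potential and a linear vector field $g$, the operator $\mathcal R=\mathcal L+\mathcal A_g^2$ preserves the polynomial filtration $\mathcal P^0\subset\mathcal P^1\subset\cdots$. Since $\mathcal R$ is self-adjoint in $L^2(\pi)$ (by the theorem of \cite{StraoAbd19} recalled in Section~\ref{R Perturbation}), and polynomials are dense in $L^2(\pi)$ for a Gaussian $\pi$, the spectrum of $-\mathcal R$ is the union over $k$ of the eigenvalues of $-\mathcal R$ restricted to $\mathcal P^k\ominus\mathcal P^{k-1}$. It then suffices to show that every eigenvalue coming from degree $k\ge 1$ is at least $3-\sqrt5$. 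The contraction estimate in $L^2(\pi)$ follows from the spectral theorem.

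\textbf{Step 1: reduction to homogeneous polynomials.} Write $-\mathcal R=S\mathbf x\cdot\nabla-\Delta-(M\mathbf x\cdot\nabla)^2$ with $M=\delta JS$. The operators $S\mathbf x\cdot\nabla$ and $(M\mathbf x\cdot\nabla)^2$ map homogeneous polynomials of degree $k$ to themselves, while $\Delta$ lowers the degree by two. In a basis adapted to the filtration, $-\mathcal R$ is therefore block triangular, and its eigenvalues on $\mathcal P^k/\mathcal P^{k-1}$ coincide with those of the degree-preserving operator
\[
T_k=S\mathbf x\cdot\nabla-(M\mathbf x\cdot\nabla)^2
\]
acting on homogeneous polynomials of degree $k$.

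\textbf{Step 2: change of variables.} With $\delta=\lambda^{-1/2}$ we have $M\mathbf x=(\sqrt\lambda\,x_2,-x_1/\sqrt\lambda)$. Set $y_1=x_1$ and $y_2=\sqrt\lambda\,x_2$. Then $M\mathbf x\cdot\nabla=y_2\partial_{y_1}-y_1\partial_{y_2}=-\partial_\theta$ is the rotation generator, and $S\mathbf x\cdot\nabla=y_1\partial_{y_1}+\lambda y_2\partial_{y_2}=\lambda E+(1-\lambda)\,y_1\partial_{y_1}$, where $E$ is the Euler operator, acting as $k$ on degree $k$. Therefore
\[
T_k=\lambda k+(1-\lambda)\,y_1\partial_{y_1}-\partial_\theta^2 .
\]
In the polar basis $r^k e^{im\theta}$ with $|m|\le k$ and $m\equiv k \pmod 2$, the term $-\partial_\theta^2$ is diagonal with entries $m^2$. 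The term $y_1\partial_{y_1}=\tfrac12 E+\tfrac12\bigl(y_1\partial_{y_1}-y_2\partial_{y_2}\bigr)$ couples $m$ only to $m\pm2$, so $T_k$ becomes an explicit tridiagonal matrix. Since all terms have nonnegative spectrum except possibly the cross term, and $\lambda k\ge0$, it suffices to bound $\Lambda\bigl((1-\lambda)y_1\partial_{y_1}-\partial_\theta^2\bigr)$ from below uniformly in $\lambda\in(0,1]$. For $\lambda\ge1$ the roles of the coordinates are swapped and the same argument applies.

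\textbf{Step 3: small degrees and the general case.} For $k=1$, with basis $\{y_1,y_2\}$, one has $y_1\partial_{y_1}=\mathrm{diag}(1,0)$ and $-\partial_\theta^2=I$, so the eigenvalues are at least $1$. For $k=2$, with basis $\{y_1^2,y_1y_2,y_2^2\}$, the $3\times3$ matrix can be diagonalized explicitly. I expect the minimum over $\lambda$, attained as $\lambda\to0$, to be exactly the root $3-\sqrt5$ of $\mu^2-6\mu+4=0$, which identifies $\gamma$.

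For $k\ge3$ I would use a Rayleigh-quotient argument. Write $p=\sum_m c_m r^k e^{im\theta}$. The term $-\partial_\theta^2$ contributes $\sum m^2|c_m|^2$, which is already at least $1$ unless the mass of $p$ concentrates on $m=0$, and that can happen only for even $k$. Modes with $m=0$ are radial polynomials $r^k$, on which $y_1\partial_{y_1}$ has a diagonal entry of size roughly $k/2$. Balancing these two contributions should give a lower bound that grows with $k$, and in any case exceeds $3-\sqrt5$.

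\textbf{Main obstacle.} The delicate step is the uniform lower bound for all $k\ge3$. The operator $(1-\lambda)y_1\partial_{y_1}$ is not symmetric in the monomial inner product, so the Rayleigh quotient has to be taken with respect to the $L^2(\pi)$ inner product restricted to the orthogonal complement. Equivalently, one can use the Hermite-type basis in which $T_k$ is self-adjoint.

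My first attempt would be a Gershgorin or Schur-complement bound on the tridiagonal matrix in the $m$-basis. Off-diagonal entries of size $O(k)$ compete against diagonal entries $m^2+(1-\lambda)k/2$. Separating the modes $|m|\le1$ from the modes $|m|\ge2$ should then yield a bound of the form $\min\bigl(c\,k,\ 4-O(1)\bigr)$, which exceeds $3-\sqrt5$ once $k\ge3$.
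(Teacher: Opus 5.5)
\textbf{There is a genuine gap: the even-degree case $k=2n\ge4$ is not proved, and your Step~2 reduction is wrong.}

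Several parts of your proposal are correct. The reduction to the top-degree operator $T_k$ is right, as is the rotation form $M\mathbf x\cdot\nabla=-\partial_\theta$ in the variables $\mathbf y$. The cases $k=1,2$ are also right: for $k=2$ the symmetric sector gives $3+\lambda-\sqrt{4+(1-\lambda)^2}$, which is the paper's $\mu_-$ and decreases to $3-\sqrt5$ as $\lambda\to0$. Odd $k$ is easy once you work in $L^2(\pi)$, since all angular frequencies are odd and $-\partial_\theta^2\succeq1$. But the theorem hinges on the even chaoses $k=2n\ge4$, and there you only say what ``should'' happen. This is exactly the step the paper spends its effort on (Lemma~\ref{lemma gap M_n} and the appendix).

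\textbf{The Step~2 reduction.} You discard $\lambda k$ and ask for a lower bound on $(1-\lambda)y_1\partial_{y_1}-\partial_\theta^2$ uniform in $\lambda\in(0,1]$. This cannot work. At $\lambda=1$ this operator is $-\partial_\theta^2$, which annihilates the radial mode $r^k$ for every even $k$. Already for $k=2$ the reduced operator has smallest eigenvalue $3-\lambda-\sqrt{4+(1-\lambda)^2}<3-\sqrt5$ for every $\lambda\in(0,1]$. The right reduction goes the other way. In the variables $\mathbf y$ the measure is $\mathcal N(0,I)$ for every $\lambda$, and $-\mathcal R=N_1+\lambda N_2-\partial_\theta^2$ with $N_i=y_i\partial_{y_i}-\partial_{y_i}^2\succeq0$. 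So the gap is nondecreasing in $\lambda$, and it suffices to bound $N_1-\partial_\theta^2$ on each chaos. This is the paper's monotonicity of the Jacobi matrix in its diagonal, followed by setting $\lambda=0$.

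\textbf{The even case still needs a real idea after this fix.} Take the orthonormal basis of $\mathcal H_{2n}$ formed by the normalized chaos projections of $z^a\bar z^b$, with $z=y_1+\mathrm i y_2$, $a+b=2n$, $m=a-b$. In it, $N_1-\partial_\theta^2$ is the Jacobi matrix with diagonal $n+m^2$ and off-diagonal entries $\tfrac12\sqrt{(a+1)b}$.
\begin{itemize}
\item At $m=0$ the diagonal $n$ is cancelled to leading order by the two neighbours, each equal to $\tfrac12\sqrt{n(n+1)}$. So Gershgorin gives $n-\sqrt{n(n+1)}<0$.
\item Your split $|m|\le1$ versus $|m|\ge2$ does not repair this if the $|m|\ge2$ block is controlled only through its lowest eigenvalue $\mu_1$. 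That eigenvalue is only $O(\sqrt n)$, while the coupling vector from $m=0$ has squared norm $n(n+1)/2$.
\item Consequently the Schur-complement test $(n-\mu)(\mu_1-\mu)>n(n+1)/2$ fails near $\mu=0$ for large $n$. Indeed the true gap on $\mathcal H_{2n}$ itself grows only like $\sqrt{2n}$, not linearly.
\end{itemize}
What is missing is an estimate that exploits this near-cancellation uniformly in $n$. The paper gets it by conjugating with a carefully tuned diagonal matrix before applying Gershgorin. The successive ratios are $\eta_n(k)=(1+1/n)\sqrt{2(n-k)/(2(n-k)-1)}$ for $n\ge11$, with $1+1/10$ in place of $1+1/n$ for $2\le n\le11$. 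After this conjugation Gershgorin yields a uniform bound above $3-\sqrt5$ for all $n\ge2$. Without an argument of this kind, your proof covers only the chaoses of odd degree and of degree two.
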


\subsection{Pairwise matching in a general dimension}\label{Pairwise matching in a general dimension}
We consider the case where the dimension is even \(d=2n \in 2\mathbb{Z}_{>0}\). Let us suppose that \[S=\mathrm{diag}(s_1,\dots,s_d), \quad 1 \geq s_1 \geq \dots \geq s_d>0,\] 
and let us consider 
\begin{equation}
    J = J(S) = \begin{pmatrix}
    &&&&&& a_1  \\ &&&& \dots  \\ &&& a_{n} \\   && -a_{n} \\ & \dots \\ -a_1
\end{pmatrix}, a_i = \frac{1}{\sqrt{s_{i}s_{d-i+1}}}, \: i=1,\dots,n. \label{J pairwise matching}
\end{equation}

Based on the results in Section \ref{2D Results}, we expect to have a convergence rate bounded below proportionally to
\[
\gamma_{\min} = \min_{\substack{i = 1,\dots,n}}  s_i +  s_{d+1-i}.
\]
And we expect that we can only reach this bound asymptotically. We confirm in the following results that this phenomena occurs in a general dimension.

\begin{theorem} \label{Main result d=2n asymptotic}
Let us denote \(J_{\hat \delta} = {\hat \delta} J\), where \(J\) is of the form \eqref{J pairwise matching}. Let \(\lambda_{\hat \delta}\) be the spectral gap of the generator associated with the drift matrix \(S\) and the perturbation defined by \(J_{\hat \delta}\). We have
\[
\lim_{{\hat \delta} \to \infty} \lambda_{\hat \delta}  = \gamma_{\min}.
\]
\end{theorem}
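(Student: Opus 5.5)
The plan is to exploit that the generator $\mathcal R_{\hat\delta}=\mathcal L+\hat\delta^2\mathcal A^2$, with $\mathcal A=(JS\mathbf x)\cdot\nabla$, has polynomial coefficients of degree at most the order of differentiation. Hence it maps each $\mathcal P^k$ into itself. Since $\mathcal R_{\hat\delta}$ is self-adjoint in $L^2(\pi)$ (the Stratonovich result of \cite{StraoAbd19} recalled above) and polynomials are dense in $L^2(\pi)$ for Gaussian $\pi$, we get an orthogonal decomposition $L^2_0(\pi)=\bigoplus_{k\ge1}H_k$ with $H_k=\mathcal P^k\ominus\mathcal P^{k-1}$. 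Each $H_k$ is finite-dimensional and invariant under $\mathcal L$, $\mathcal A^2$ and $\mathcal R_{\hat\delta}$. The spectral gap is therefore $\lambda_{\hat\delta}=\inf_{k\ge1}\Lambda\big(-\mathcal R_{\hat\delta}|_{H_k}\big)$, and the problem reduces to a family of symmetric matrix problems.

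\textbf{Uniformity in $k$.} I would first make the infimum uniform in $k$. Because $\mathcal A^2=-\mathcal A^{\#}\mathcal A\le 0$, we have $-\mathcal R_{\hat\delta}\ge-\mathcal L$ on every $H_k$. By the Metafune spectrum recalled above, $-\mathcal L|_{H_k}$ has all eigenvalues at least $k\,s_d$. Thus for $k>K:=\lceil\gamma_{\min}/s_d\rceil+1$ every eigenvalue exceeds $\gamma_{\min}$ independently of $\hat\delta$. Only the finitely many spaces $H_1,\dots,H_K$ matter.

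\textbf{Limit $\hat\delta\to\infty$ on a fixed $H_k$.} On each fixed $H_k$ I would apply a standard finite-dimensional fact. If $P$ is a symmetric matrix and $N\le0$ is symmetric, then the eigenvalues of $P-\hat\delta^2N$ split into two groups as $\hat\delta\to\infty$. Those associated with $\operatorname{ran}N$ diverge to $+\infty$. The remaining ones converge to the eigenvalues of the compression $\Pi(P)\Pi$, where $\Pi$ is the orthogonal projection onto $\ker N$. This follows by min–max, or by a Schur complement argument. Here $\ker\mathcal A^2|_{H_k}=\ker\mathcal A|_{H_k}$, the polynomials invariant under the flow $\dot{\mathbf x}=JS\mathbf x$. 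Hence
\[
\lim_{\hat\delta\to\infty}\lambda_{\hat\delta}=\min_{1\le k\le K}\Lambda\big(\Pi_k(-\mathcal L)\Pi_k\big).
\]
The compression is explicit. In the rescaled variables $y_i=\sqrt{s_i}x_i$, each block $(i,d+1-i)$ of $JS$ becomes a rotation, and because $a_i=1/\sqrt{s_is_{d+1-i}}$ all blocks rotate at the same unit frequency. The orthogonal projection onto $\ker\mathcal A$ is the average over this rotation group, which commutes with the $\pi$-inner product since the flow preserves $\pi$. So $\Pi(-\mathcal L)\Pi=-\bar{\mathcal L}$ on invariant polynomials, where $\bar{\mathcal L}$ is $\mathcal L$ averaged over the group action.

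\textbf{The averaged generator.} I expect this to be an Ornstein–Uhlenbeck generator (in $y$) whose drift in block $i$ is $-\tfrac{s_i+s_{d+1-i}}2\,y$, with correspondingly averaged isotropic diffusion in that block. Its spectrum on polynomials is $\{\sum_j n_jc_j\}$ with $c_i=\tfrac{s_i+s_{d+1-i}}2$, and eigenfunctions are Hermite-type products. Restricting to rotation-invariant polynomials forces each nonconstant invariant to have degree at least $2$ with paired conjugate modes, since invariants under a common $U(1)$ action are generated by $z_i\bar z_j$. The smallest eigenvalue among nonconstant invariants is then $2\min_i c_i=\gamma_{\min}$.

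For the upper bound I would check directly that it is attained. Take $E_i=s_ix_i^2+s_{d+1-i}x_{d+1-i}^2$, which is conserved by $\mathcal A$. Averaging $-\mathcal L E_i$ gives $(s_i+s_{d+1-i})(E_i-2)$, so $E_i-2$ is an eigenfunction with eigenvalue $s_i+s_{d+1-i}$.

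\textbf{Main obstacle.} The step I expect to be hardest is the lower bound on the compressed operator: proving rigorously that $\bar{\mathcal L}$ really is this block-averaged OU generator, and controlling the cross-block invariants $z_i\bar z_j$ with $i\neq j$. For those I expect eigenvalue $c_i+c_j\ge\gamma_{\min}$, but the diffusion averaging in mixed blocks must be checked carefully. It would be handled by computing $\bar{\mathcal L}$ on the complex monomials $z^\alpha\bar z^\beta$ with $|\alpha|=|\beta|$.
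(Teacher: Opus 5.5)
Your proposal is correct and follows the paper's overall architecture:
\begin{itemize}
\item the Wiener-chaos decomposition;
\item truncation to finitely many chaoses via $-\mathcal R_{\hat\delta}\succeq-\mathcal L$ and $-\mathcal L|_{\mathcal H_k}\succeq k\,s_d$;
\item the upper bound from the $\mathcal A$-invariant function $|z_{i^*}|^2-2$;
\item the lower bound on $\ker\mathcal A$, obtained by averaging $\mathcal L$ over the unit-frequency rotation group.
\end{itemize}
The genuine difference is the passage to the limit $\hat\delta\to\infty$. You apply, chaos by chaos, the standard finite-dimensional fact that the lowest eigenvalues of $P+\hat\delta^2Q$ with $Q\succeq0$ converge to those of the compression of $P$ onto $\ker Q$, while the others diverge. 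The paper instead takes minimizers $f_n=g_n+h_n$ with $g_n\in\ker\mathcal A$. It argues that $h_n\to0$ because $\delta_n^2\langle h_n,\mathcal A^{\#}\mathcal A h_n\rangle$ must stay bounded on a fixed finite-dimensional space, and then lets the cross term $\langle g_n,\nabla^{\#}\nabla h_n\rangle$ vanish. Your version identifies the limit exactly on each $\mathcal H_k$ and avoids the subsequence and limit bookkeeping the paper leaves implicit, at the cost of quoting a perturbation lemma (provable in a few lines by min--max plus a Young-inequality bound on the off-diagonal block).

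The step you single out as the main obstacle is routine. In $\mathbf y=S^{1/2}\mathbf x$, write $-\mathcal L=\sum_i s_iE_i$, with $E_i=y_i\partial_{y_i}-\partial_{y_i}^2$ the number operator of the single coordinate $y_i$ (the paper's notation, not your quadratic invariant). Each $E_i$ only involves its own block $(i,d+1-i)$, and the rotation acts within that block. So its average is $\frac{E_i+E_{d+1-i}}{2}$ (Lemma~\ref{Computation Ei bar}), and no mixed-block diffusion terms can appear. Hence
\[
-\bar{\mathcal L}=\sum_i\frac{s_i+s_{d+1-i}}{2}E_i\succeq\frac{\gamma_{\min}}{2}\sum_iE_i .
\]
Now $\sum_iE_i=2k$ on $\mathcal H_{2k}$, and invariant functions vanish in odd chaoses because the rotation by $\pi$ is $\mathbf y\mapsto-\mathbf y$. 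This gives the bound $k\gamma_{\min}\ge\gamma_{\min}$ on $\ker\mathcal A\cap\mathcal H_{2k}$ directly, without treating the monomials $z_i\bar z_j$ separately.
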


\begin{theorem} \label{Main result d=2n non asymptotic}
    In the notation of Theorem \ref{Main result d=2n asymptotic},
  If 
        \[
    \hat \delta^2 \ge \max \left(\frac{ (\gamma_{\min})^2 + 16s_1^2 - 4 \gamma_{\min} s_d }{4\gamma_{\min}s_d}, 1\right),
    \]
    Then 
    \[
    \lambda_{\hat \delta} \geq \frac{\gamma_{\min}}{2}.
    \]
\end{theorem}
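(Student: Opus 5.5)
The plan is to estimate the Dirichlet form of \(\mathcal R\) directly through a Rayleigh quotient, using the rotational structure that \(J\) from \eqref{J pairwise matching} creates.

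\textbf{Setup.} Write \(\mathcal A = \hat\delta\, (JS\mathbf x)\cdot\nabla\), so that \(\mathcal R = \mathcal L + \mathcal A^2\). Since \(\mathcal R\) is self-adjoint in \(L^2(\pi)\), its spectral gap is
\[
\lambda_{\hat\delta} = \inf_{\pi(\phi)=0,\ \phi\neq 0} \frac{\norm{\nabla\phi}^2_{L^2(\pi)} + \norm{\mathcal A\phi}^2_{L^2(\pi)}}{\norm{\phi}^2_{L^2(\pi)}} .
\]

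\textbf{Reduction to polynomials of fixed degree.} The drift of \(\mathcal R\) is linear and its diffusion coefficient is quadratic. Hence \(\mathcal R\) maps \(\mathcal P^k\) into itself for every \(k\). Its self-adjoint restrictions to the orthogonal layers \(\mathcal P^k\ominus\mathcal P^{k-1}\) (Hermite-type layers) exhaust the spectrum. It therefore suffices to bound the Rayleigh quotient on each finite-dimensional layer.

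\textbf{Rotational structure.} In the rescaled variables \(\mathbf y = S^{1/2}\mathbf x\), the field \(JS\mathbf x\) acts on each pair \((i,d+1-i)\) as a rotation. Its angular speed is \(a_i\sqrt{s_is_{d+1-i}}=1\), and this speed is the same for all pairs by the choice of \(a_i\). Consequently
\[
\mathcal A=\hat\delta\sum_{k=1}^n\partial_{\theta_k}
\]
in polar coordinates of each pair. Let \(\Pi\) denote averaging over the simultaneous rotation. Then \(\mathcal A\) is skew-adjoint and has integer Fourier modes. Every component orthogonal to \(\ker\mathcal A\) has frequency at least \(1\), so
\[
\norm{\mathcal A\phi}^2 \ge \hat\delta^2\norm{(I-\Pi)\phi}^2 .
\]

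\textbf{Estimate on invariant functions.} On rotation-invariant functions, I will show
\[
\langle \Pi\phi,-\mathcal L\Pi\phi\rangle\ge \gamma_{\min}\norm{\Pi\phi-\pi(\phi)}^2 .
\]
The planned route is as follows. The invariant polynomials in each layer are generated by the radii \(r_k^2=s_kx_k^2+s_{d+1-k}x_{d+1-k}^2\) and by phase-difference monomials. A direct computation, as in the two-dimensional case of Section \ref{Computational approach in dimension two}, then shows that \(\mathcal L\) acting on them produces rates that are sums of the form \(s_k+s_{d+1-k}\). The main point is that linear functions, which carry the slow rate \(s_d\), are never invariant.

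\textbf{Combining the two parts.} Split \(\phi=\Pi\phi+\psi\) with \(\psi=(I-\Pi)\phi\). Expand the Dirichlet form as
\[
\norm{\nabla\phi}^2 = \langle\Pi\phi,-\mathcal L\Pi\phi\rangle + 2\langle\Pi\phi,-\mathcal L\psi\rangle + \langle\psi,-\mathcal L\psi\rangle .
\]
Use \(\langle\psi,-\mathcal L\psi\rangle\ge s_d\norm{\psi}^2\), which follows from the Gaussian Poincaré inequality in the \(\mathbf y\) variables. The coefficient \(s_d\) is where the \(s_d\) in the threshold comes from.

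The cross term is the main obstacle, because \(\nabla\) does not commute with \(\Pi\) when \(s_i\neq s_{d+1-i}\). My first attempt is to write
\[
\mathcal L=\Pi\mathcal L\Pi + \text{(off-diagonal part)}
\]
and to bound the off-diagonal coupling by a constant of order \(2s_1\), using the layer structure and the fact that \(s_1\) is the largest diffusion weight. Young's inequality then gives
\[
2\lvert\langle\Pi\phi,-\mathcal L\psi\rangle\rvert\le \tfrac{\gamma_{\min}}{2}\norm{\Pi\phi}^2+\tfrac{2(4s_1)^2}{\gamma_{\min}}\norm{\psi}^2 .
\]
This Young step introduces the factor \(\gamma_{\min}/2\) in the final bound.

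Collecting terms gives
\[
\mathcal E(\phi)\ge \tfrac{\gamma_{\min}}{2}\norm{\Pi\phi}^2+\Bigl(\hat\delta^2 s_d\cdot c - \tfrac{16s_1^2}{\gamma_{\min}}+\dots\Bigr)\norm{\psi}^2 ,
\]
where \(c\) is a constant and the dots stand for the remaining lower-order terms. I will check that the bracket is at least \(\gamma_{\min}/2\) precisely when \(\hat\delta^2\) exceeds the quadratic expression \(\bigl((\gamma_{\min})^2+16s_1^2-4\gamma_{\min}s_d\bigr)/(4\gamma_{\min}s_d)\). The condition \(\hat\delta\ge1\) will be used to absorb the diagonal term \(\langle\psi,-\mathcal L\psi\rangle\) consistently with the \(\hat\delta\)-dependent weights in \(\mathcal A\).
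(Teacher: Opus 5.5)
Your decomposition is the paper's: chaos layers, $\ker\mathcal A$ versus its orthogonal complement, and $\norm{\mathcal A\psi}^2\ge\hat\delta^2\norm{\psi}^2$ on the complement. However, the step that is supposed to close the argument is false. There is no degree-independent constant of order $s_1$ bounding $\lvert\inner{\Pi\phi}{-\mathcal L\psi}\rvert/(\norm{\Pi\phi}\,\norm{\psi})$. Because $-\mathcal L$ is unbounded, the coupling between $\ker\mathcal A\cap\mathcal H_m$ and its complement inside the $m$-th layer grows linearly in $m$.

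This already happens for $d=2$. There $\ker\mathcal A\cap\mathcal H_{2k}$ is spanned by $\sum_{j}\binom{k}{j}h_{2j}(y_1)h_{2k-2j}(y_2)$. On $\mathcal H_{2k}$ one can write $-\mathcal L=k(s_1+s_2)+\tfrac12(s_1-s_2)(E_1-E_2)$, with $E_i=y_i\partial_{y_i}-\partial_{y_i}^2$. The Hermite weights of the radial element follow the discrete arcsine law, and from this the coupling equals $\tfrac12(s_1-s_2)\sqrt{2k(k+1)}$.

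So neither your global Young step nor your global lower bound on $\mathcal E(\phi)$ is available. Your ``collecting terms'' step also has no source for a product $\hat\delta^2 s_d$: on $\psi$ the two contributions simply add to $\hat\delta^2+s_d$. The $s_d$ in the denominator of the threshold does not come from the Poincar\'e bound on $\psi$, and $\hat\delta\ge1$ is not what absorbs $\inner{\psi}{-\mathcal L\psi}$.

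What is missing is to run the argument layer by layer, with constants that grow with the degree, and then truncate.

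\emph{Odd layers.} Here $\ker\mathcal A\cap\mathcal H_m=\{0\}$, so the quotient is at least $\hat\delta^2+ms_d\ge1$. This is where $\hat\delta\ge1$ is used.

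\emph{High layers.} For $m\ge s_d^{-1}$ the unperturbed bound $ms_d\ge1$ already suffices.

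\emph{The remaining even layers.} There are finitely many, $m=2k\le s_d^{-1}$. Write $f=g+h$ with $g\in\ker\mathcal A\cap\mathcal H_{2k}$ and $h$ in its complement. Use three estimates:
$\inner{g}{-\mathcal Lg}\ge k\gamma_{\min}\norm{g}^2$;
$\inner{h}{-\mathcal Lh}+\norm{\mathcal Ah}^2\ge(2ks_d+\hat\delta^2)\norm{h}^2$;
$2\lvert\inner{g}{-\mathcal Lh}\rvert\le 4ks_1\norm{g}\norm{h}$, since the operator norm of $-\mathcal L$ on $\mathcal H_{2k}$ is $2ks_1$.
The smallest eigenvalue of the resulting $2\times2$ quadratic form is at least $k\gamma_{\min}/2$ as soon as $\hat\delta^2\ge k(\gamma_{\min}^2+16s_1^2-4\gamma_{\min}s_d)/(2\gamma_{\min})$. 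Bounding $k\le(2s_d)^{-1}$ yields exactly the stated threshold, so the factor $1/s_d$ counts how many layers must be treated.

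The linear growth $k\gamma_{\min}$ of the bound on invariant functions is essential. Against a cross term of size $4ks_1$, your degree-free bound $\gamma_{\min}$ makes the per-layer threshold grow like $k^2$, which only gives $\hat\delta^2\gtrsim s_d^{-2}$.

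That bound also needs a real argument, not ``rates of $\mathcal L$ on invariant polynomials'': $\mathcal L$ does not preserve $\ker\mathcal A$ when $s_i\neq s_{d+1-i}$. The working argument averages over the rotation group. For invariant $u$, this shows the quadratic form of $-\mathcal L$ equals that of the Ornstein--Uhlenbeck operator with pair-averaged weights $\tfrac12(s_i+s_{d+1-i})$. That form is at least $k\gamma_{\min}$ on $\mathcal H_{2k}$.
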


\begin{remark}
 One should compare the convergence rate \(\gamma_{\min}\) with the optimal convergence rate in the nonreversible case, which is \(\text{Tr}(S)/d\). When the eigenvalues \(s_1 \ge \dots\ge s_d\) are approximately distributed symmetrically around their mean, i.e \( \gamma_{\min}/2 \approx \text{Tr}(S)/d\), then the two convergence rate are of the same order. For instance, when \(S\) is the discrete Laplacian \eqref{discrete Laplacian}, we have exactly for all \(i = 1,\dots,n\)
    \[
    \frac{s_i + s_{d+1-i}}2 = \frac {\gamma_{\min}} 2= \frac{\text{Tr}(S)}d.
    \]
    On the other hand, if \(S\) has a few big eigenvalues and a lot of small ones, the Stratonovich perturbation induced by the matrix \(J\) from \eqref{J pairwise matching} does not significantly improve the convergence. For instance in the worst case \(S = \text{diag}(1,\lambda,\dots,\lambda)\), we observe that the spectral gap becomes $\lambda \mapsto 2\lambda$ so the perturbation does not improve the situation. Such experiment is presented in Section \ref{Multidimensional Gaussian experiment}.
    
     Another kind of problem arises when considering a matrix $S$ with a lot of small eigenvalues: the coefficients $a_i = \frac{1}{\sqrt{s_i \, s_{d+1-i}}}$ may be large causing issues for numerical implementation notably of stiffness and step-size restrictions.

      We address these issues in Section \ref{Splitting eigenvalues in a general dimension} by proposing another construction of perturbation.
\end{remark}

\begin{remark}
The statement of Theorem \ref{Main result d=2n non asymptotic} is valid for a rather large perturbation scaling \(J_{\hat \delta} = \hat \delta J = O(s_d^{-1})\). This contrasts with the two dimensional optimal scaling \(O(s_d^{-1/2})\) and we observe in numerical experiments of Section \ref{Numerical experiment} that choosing \(\hat \delta = 1\) offers significant improvement in terms of convergence rate. The reason of this theoretical restriction is further explained in Section \ref{Proof in a general dimension: Rayleigh estimation}. 
\end{remark}

 We now describe the underlying algorithm to build \(J\) in a more general setting, which allow to build a reversible sampler given a potential \(V\) by considering the dynamics
 \begin{equation}
     d \textbf{X}_t = -\nabla V(\textbf{X}_t)\,dt + J \nabla V(\textbf{X}_t) \circ \sqrt{2}d\beta_t + \sqrt{2}d \textbf{W}_t. \label{General Strato Sampler}
 \end{equation}

\begin{algorithm} [H]
\caption{Constructing a pairwise-matching perturbation}
\label{Aglo pairwise}
\begin{algorithmic}[1]

\State \textbf{Input:} $A\in\mathbb{R}^{d\times d}$ symmetric positive definite.

\State Diagonalize \(A\) as \(Q A Q^T = D\), where \(D=\textrm{diag}(s_1,\ldots,s_d)\) is a diagonal matrix containing eigenvalues \(s_1 \ge \dots \ge s_d\) sorted in non increasing order and \(Q\) is an orthogonal matrix.

\State Define \(\tilde J= J( D)\) using \eqref{J pairwise matching}.
 \State \Return \(J = Q^T  \tilde J \, Q \).
\end{algorithmic}
\end{algorithm}
In the following, we denote \(J(A)\) the skew-symmetric matrix built using Algorithm \ref{Aglo pairwise} with input~\(A\).

Clearly, Algorithm \ref{Aglo pairwise} applies naturally to the case where \(V(\textbf{x}) = \frac{\textbf{x}^T A \textbf{x}}{2}\) is quadratic. In this case, we recover the theoretical guarantee of Theorem \ref{Main result d=2n asymptotic} and numerical experiments presented in Section \ref{Multidimensional Gaussian experiment} corroborate our statement. Additionally, we can build a Stratonovich perturbation when considering a potential of the form
\begin{equation}
   V(\textbf{x}) = \frac{\textbf{x}^T A \textbf{x}}{2} + \psi(\textbf{x}). \label{V semi-quadratic} 
\end{equation}
Indeed it is natural to consider the Stratonovich perturbation associated with \(J(A)\). Numerical experiments presented in Section \ref{Multidimensional non-Gaussian experiments} indicates that such methodology significantly improves the performance of the sampler derived from \eqref{General Strato Sampler}. Finally, another possibility to construct a Stratonovich perturbation using Algorithm \ref{Aglo pairwise} is to consider \(J(\mathcal{F})\), where \(\mathcal{F}\) is the \emph{Fisher information} matrix
\[
\mathcal{F} = \mathbb{E}_\pi [ \nabla V(\textbf{x}) \nabla V(\textbf{x})^T] = \mathbb{E}_\pi [\nabla^2 V(\textbf{x})].
\]
Such approach has been considered in the recent work \cite{zhu2026optimizingirreversibleperturbationsunadjusted} to build a nonreversible perturbation. A theoretical justification of applying either \(J(A)\) or \(J(\mathcal{F})\) in \eqref{Stratonovich quadratic SDE} to the non quadratic case will be explored in a future work.

\subsection{Splitting eigenvalues in a general dimension}\label{Splitting eigenvalues in a general dimension}
To avoid the problems mentioned in the previous remark, we can consider pairing small eigenvalues strictly to the biggest one. One way to do so is to add multiple Stratonovich perturbations. For instance, suppose that, for an index $k \le d/2$, we have
\[
\begin{cases}
    s_i = O(1), \text{ if } i < k. \\ s_i = o(1), \text{ if } i \geq k.
\end{cases} 
\]
We can consider a family of perturbation matrices $(J_i)_{i = k}^{d}$ of the form
\[
J_i = \begin{pmatrix}
    &&& a_i &&&  \\ &&&&&  \\ &&&&   \\ -a_i &&&  \\   &&  \\ &  \\ 
\end{pmatrix}, \text{ i.e. } (J_i)_{k,l} = \begin{cases}
    a_i,\text{ if } k = 1, l = i. \\ -a_i,\text{ if } k = i, l = 1.\\ 0,\text{ else}.
\end{cases}
\]
Where \(a_i\) are as in \eqref{J pairwise matching}. The dynamics perturbed with $d-k+1$ Stratonovich noises $(\beta_i)_{i=k}^{d} $ write as the SDE
\begin{equation}
    d\mathbf{X}_t = -\left (S - \sum_{i =k}^{d} (J_iS)^2 \right ) \mathbf{X}_t dt + \sum_{i =k}^{d}  J_iS \: \mathbf{X}_t \sqrt2 d\beta_i(t) + \sqrt 2d\mathbf W_t. \label{method2 SDE}
\end{equation}

This dynamics is reversible w.r.t the invariant measure \(\pi\). In the proof of the two previous result, we can see that the spectral gap of the generator is essentially governed by the rate of convergence to equilibrium of the second moment. Moreover, we can compute that the second moment of the dynamics \eqref{method2 SDE} converges at speed at least
\[ \lambda_{d,k} =  2 + d-k - \sqrt{(1+d-k)^2 + 2(d-k+1) - 3}.\]
We remark that \( \lambda_{d,k} \approx  \frac{2}{d-k+1}\) when \(d-k\gg 1\). Hence, an intuitive interpretation of this approach is that it \emph{splits} the strength of the largest eigenvalue into the smallest ones. Note also that \(\lambda_{2,1} = 3 - \sqrt{5}\), which corresponds exactly to Theorem \ref{Main Result 2d}.

We could make rigorous this heuristic argument by using the same proof as in Section \ref{Proof in a general dimension: Rayleigh estimation}. For the sake of brevity, we do not provide more details, but we illustrate the benefits of this approach compared to the construction \eqref{J pairwise matching} in a numerical example in Section \ref{Multidimensional Gaussian experiment}.

\begin{remark}
We could also try the same idea using only one noise by considering the matrix
\[
J = \sum_{i=k}^{d-1} J_i.
\]
    The issue is that the drift matrix \(S- (JS)^2\) has a complicated shape and is difficult to analyze, whereas the matrices in \eqref{method2 SDE} have a sufficiently nice structure to be analyzed. Moreover, numerical experiments suggests that using only one noise with the matrix \(J\) is not sufficient to obtain a spectral gap bounded away from \(0\).
\end{remark}

\section{Proof in dimension two: moment estimates} \label{Computational approach in dimension two}
The main difficulty in this setting is that, even when the initial condition is Gaussian, the new dynamics \eqref{Reversible SDE 2d} is not Gaussian contrary to the original dynamics (i.e. with \(J=0\)). Therefore, we cannot rely on controlling only the first two moments to prove the result.

In order to do so, we first establish a bound on the convergence rate of the moments. We start with the expectation in Section \ref{Expectation}, then the covariance in Section \ref{Covariance} and finally a general moment in Section \ref{Higher moments}. Finally, we extend the bound to the \(L^2(\pi)\) norm for a general observable in Section \ref{Extension to general observables}.

\subsection{Convergence of the expectation} \label{Expectation}

Let \(\mathbf{x}_t = \mathbb{E}[\mathbf{X}_t]\). By linearity of expectation and properties of the Itô integral, \(\mathbf{x}_t\) satisfies the deterministic ODE
\[
d\mathbf{x}_t = -B_{\delta}\,\mathbf{x}_t\,dt.
\]
Its solution is explicitly given by
\[
\mathbf{x}_t = e^{-B_{\delta} t}\,\mathbf{x}_0.
\]
Since \(B_{\delta}\) is diagonal, the dynamics decouple along coordinates. Writing
\[
(\mathbf{x}_t)_1 = x_1(t) \equiv x_t, 
\qquad 
(\mathbf{x}_t)_2 = x_2(t) \equiv y_t,
\]
we obtain exponential convergence to zero in each coordinate. More precisely,
\[
x_t \text{ converges at rate } 1 + \delta^2\lambda,
\qquad
y_t \text{ converges at rate } \lambda + \delta^2\lambda.
\]

Thus, the convergence speed can be made arbitrarily large by increasing \(\delta\). For example, choosing \(\delta = \lambda^{-1/2}\) yields that the smallest eigenvalue of \(B_{\delta}\) is \(1 + \lambda\).

\subsection{Convergence of the covariance} \label{Covariance}
We consider the covariance matrix by $A_t = \mathbb E [\mathbf{X}_t\mathbf{X}_t^T]$. In order to derive the ODE associated to its evolution in time, we recall that by the Itô formula
\[
d(\mathbf{X}_t\mathbf{X}_t^T) =(d\mathbf{X}_t)\mathbf{X}_t^T + \mathbf{X}_t (d\mathbf{X}_t)^T + (d\mathbf{X}_t)(d\mathbf{X}_t)^T.
\]
We obtain the equation
\begin{equation}
    \frac{d}{dt} A_t = -B A_t - A_t B + 2\delta^2 JSA_tSJ^T + 2 I,\: A(0) = A_0. \label{Covariance ODE V}
\end{equation}
A direct calculation shows that \(A_{\infty} = S^{-1}\) is a stationary solution of this equation.
 To find a solution dependent on time, let $\tilde{A}_t$ be a solution of (\ref{Covariance ODE V}) and consider $A_t\coloneqq \tilde{A}_t - A_{\infty} $. 
Clearly, this matrix satisfies the initial condition $ A_0 = \tilde{A}_0 - S^{-1}$ and the equation
\[
    \frac{d}{dt} A_t  = -B A_t - A_t B + 2\delta^2 JSA_tSJ^T.
\]
Denote $A_t = \begin{pmatrix}
            a & b \\ b & c
        \end{pmatrix} $. We obtain
        \[
      \frac{d}{dt} A_t  = \begin{pmatrix}
            2 (\lambda \delta)^2c - 2(1+ \lambda \delta^2)a & -(1+ 4\lambda \delta^2 + \lambda)b \\ -(1 + 4\lambda \delta^2 + \lambda)b
            &  2 \delta^2 a - 2 \lambda(1+ \delta^2)c
        \end{pmatrix}.
        \]
This can be rewritten as a system of ODE
\[
\begin{cases}
\frac{d}{dt} \begin{pmatrix}
    a(t) \\ c(t)
\end{pmatrix} = -2M \begin{pmatrix}
    a(t) \\ c(t)
\end{pmatrix}, \text{ where } M =  \begin{pmatrix}
            1 + \lambda \delta^2 & -\lambda^2\delta^2 \\ -\delta^2 & \lambda(1 +  \delta^2)
        \end{pmatrix},  \\ \\
\frac{d}{dt} b(t) = -(1 + 4\lambda \delta^2+ \lambda)b(t).
\end{cases}
\]
with solutions
\[
\begin{pmatrix}
    a(t) \\ c(t)
\end{pmatrix} = e^{-2Mt} \begin{pmatrix}
    a(0) \\ c(0)
\end{pmatrix}, \quad  b(t) = e^{ -(1 + 4\lambda \delta^2 + \lambda)t}b(0).
 \]
We analyze the convergence rate of $a(t)$, $b(t)$, and $c(t)$ to $0$. As $b(t)$ behaves similarly to $\mathbf{x}_t$, we focus on the term $e^{-2Mt}$ in the remainder of this section. The following lemma characterizes the critical scaling of $\delta$ required to ensure a convergence rate bounded away from zero.

\begin{lemma} \label{Smallest eigenvalue M}
    \(\Lambda(2M)\) is an increasing function of \(\lambda > 0\) and an increasing and bounded function of \(\delta\). Moreover, if we set \(\delta  = \lambda^{-\alpha} \) for \(\alpha \in \mathbb{R}^*_+\), we have
    \[
\Lambda(2M)  \xrightarrow[\lambda \to 0]{} \begin{cases}
     1, \text{ if } \alpha > 1/2, \\
     3 - \sqrt{5} \approx 0.764 , \text{ if } \alpha = 1/2, \\ 
     0, \text{ if } 0 < \alpha <  1/2.
 \end{cases}
\]
\end{lemma}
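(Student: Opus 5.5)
Since $M$ is a $2\times 2$ matrix, I would work directly from the closed form of its eigenvalues. Write $u=\delta^2$ and set
\[
T=\operatorname{Tr}(M)=1+\lambda+2\lambda u,\qquad D=\det(M)=\lambda(1+u+\lambda u),
\]
where $D$ simplifies because the terms $\pm\lambda^2u^2$ cancel. The off-diagonal entries of $M$ have positive product $\lambda^2u^2$. Hence the discriminant $T^2-4D$ is positive, the eigenvalues are real and distinct, and
\[
\Lambda(2M)=T-R=\frac{4D}{T+R},\qquad R=\sqrt{T^2-4D}.
\]
All the claims then reduce to elementary manipulations of this explicit function of $(\lambda,u)$.

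\textbf{Monotonicity and boundedness in $\delta$.} Differentiating $T-R$ with respect to $u$, and using $\partial_uT=2\lambda$, $\partial_uD=\lambda(1+\lambda)$, gives
\[
\partial_u\Lambda(2M)=\frac{2\lambda\,\bigl(1+\lambda-\Lambda(2M)\bigr)}{R}.
\]
So it suffices to show $\Lambda(2M)\le 1+\lambda$, which is equivalent to $T-(1+\lambda)\le R$. Squaring, this becomes
\[
T^2-4D-\bigl(T-(1+\lambda)\bigr)^2=(1-\lambda)^2\ge 0.
\]
This shows the function is nondecreasing in $\delta$, and strictly increasing for $\lambda\neq1$. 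Letting $u\to\infty$ in $4D/(T+R)$, where both $T$ and $R$ grow like $2\lambda u$, gives the limit $1+\lambda$, which is the bound.

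\textbf{Monotonicity in $\lambda$.} Here $\partial_\lambda T=1+2u$ and $\partial_\lambda D=1+u+2\lambda u$. The same computation shows the derivative has the sign of
\[
2(1+u+2\lambda u)-(1+2u)\,\Lambda(2M).
\]
To conclude I would bound $\Lambda(2M)$ by a quantity $\le 2(1+u+2\lambda u)/(1+2u)$. The natural candidate is $\Lambda(2M)=4D/(T+R)$ combined with a lower bound on $R$ of the same squared type as above. Alternatively, one can use that for a $2\times2$ matrix whose off-diagonal entries have positive product, the smaller eigenvalue is below each diagonal entry, i.e.\ $\Lambda(M)\le\min\{1+\lambda u,\ \lambda(1+u)\}$, and then check the resulting polynomial inequality. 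I expect this step to be the main obstacle, since the needed inequality is not as clean as $(1-\lambda)^2\ge0$. If a direct bound fails, I would instead square the condition $(1+2u)(T-R)<2\partial_\lambda D$ and simplify it into a manifestly nonnegative polynomial in $(\lambda,u)$.

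\textbf{Limits.} Substitute $\mu=\lambda u=\lambda^{1-2\alpha}$. Then $T=1+\lambda+2\mu$ and $D=\lambda+\mu+\lambda\mu$, and I treat the three regimes separately.
\begin{itemize}
\item For $\alpha=1/2$, $\mu=1$, so $T=3+\lambda$ and $D=1+2\lambda$. Hence $\Lambda(2M)\to 3-\sqrt{9-4}=3-\sqrt5$.
\item For $\alpha>1/2$, $\mu\to\infty$ while $D\sim\mu$ and $T+R\sim4\mu$. Hence $\Lambda(2M)=4D/(T+R)\to1$.
\item For $0<\alpha<1/2$, $\mu\to0$, so $T\to1$ and $D\to0$. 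Hence $\Lambda(2M)\le 4D/T\to0$.
\end{itemize}
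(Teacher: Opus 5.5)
Your route is the paper's: write $\Lambda(2M)=T-R$ with $R=\sqrt{(1-\lambda)^2+4\lambda^2u^2}$ and read everything off this closed form. Your treatment of the $\delta$-dependence is correct and more detailed than the paper, which only asserts these facts. This covers the derivative formula, the bound $1+\lambda$ (and your remark that for $\lambda=1$ the function is constant in $\delta$, so ``increasing'' holds only weakly there). Your three limits along $\delta=\lambda^{-\alpha}$ are also correct. The genuine gap is monotonicity in $\lambda$, which you leave unproved. The paper's proof does not address it either. Your ``alternative'' via the diagonal entries cannot work. For $\lambda\le 1$ it gives $\Lambda(2M)\le 2\lambda(1+u)$, which grows linearly in $u$, while the threshold $2(1+u+2\lambda u)/(1+2u)$ you must beat stays bounded as $u\to\infty$. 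For instance, with $\lambda=1/2$ and $u=10$ you need $\Lambda(2M)<2$, but the diagonal bound only gives $\Lambda(2M)\le 11$.

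Your squaring fallback does work, but the step closes in one line without squaring. The condition $(1+2u)(T-R)<2\partial_\lambda D$ is equivalent to $\lambda-1+4\lambda u^2<(1+2u)R$, because $(1+2u)T-2(1+u+2\lambda u)=\lambda-1+4\lambda u^2$. For $\lambda,u>0$ one has $R>|1-\lambda|$ and $R\ge 2\lambda u$, hence $(1+2u)R=R+2uR>|\lambda-1|+4\lambda u^2$. So $\partial_\lambda\Lambda(2M)>0$ for every fixed $\delta>0$. Along the critical scaling $\delta=\lambda^{-1/2}$ used later, $\Lambda(2M)=3+\lambda-\sqrt{(1-\lambda)^2+4}$ is also increasing in $\lambda$. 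This is what actually justifies $\inf_{\lambda>0}\Lambda(M_2)=3-\sqrt{5}$ in Lemma~\ref{lemma gap M_n}.
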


\begin{proof}[Proof of Lemma \ref{Smallest eigenvalue M}]

A direct computation shows that the eigenvalues of $M$ are
\[
\tilde{\mu}_{\pm}
= \frac{\lambda + 2\lambda\delta^2 + 1 \pm \sqrt{(2\lambda\delta^2)^2 + (\lambda - 1)^2}}{2}.
\]
We are interested in the eigenvalues of $2M$, that we denote \( \mu_{\pm} = 2\tilde{\mu}_{\pm}\). 
For all $\delta > 0$, these satisfy $\mu_+ > \mu_- > 0$. Hence, the smallest eigenvalue of $2M$ is $\mu_- = \mu_-(\delta)$. Moreover, $\mu_-(\delta)$ is increasing in $\delta$ and satisfies
\[
\mu_-(\delta) \xrightarrow[\delta \to \infty]{} 1 + \lambda.
\]

Let $\delta = \lambda^{-\alpha}$, a direct computation yields,
\[
\mu_-(\lambda,\lambda^{-\alpha})
\xrightarrow[\lambda \to 0]{}
\begin{cases}
1, & \text{if } \alpha > 1/2, \\[4pt]
2\left(\tfrac{1}{2} + 1 - \tfrac{\sqrt{5}}{2}\right), & \text{if } \alpha = 1/2, \\[6pt]
0, & \text{if } 0 < \alpha < 1/2.
\end{cases}
\]
\end{proof}
In particular, the choice $\delta = \lambda^{-1/2}$ ensures that $\mu_-$ remains bounded away from zero as $\lambda \to 0$.

\subsection{Convergence of higher moments} \label{Higher moments}

In this section, we analyze the moments of higher order $\Phi_{k,l}\coloneqq \mathbb E [\phi_{k,l}(\textbf{X}_t)]$, where $\phi_{k,l}(\textbf{x}) \coloneqq x^k y^l$.

\begin{proposition} \label{prop ODEs Moments}
    The moments \(\Phi_{k,l}\) of order \( k + l = n\) satisfy a system of ODEs of the form
 \begin{equation}
    \frac{d}{dt}\mathbf{A}_{n} = -M_n\mathbf{A}_{n} + \mathbf{B}_n, \quad \mathbf{A}_n,\mathbf{B}_n  \in \mathbb{R}^{r_n}, \: M_n \in \mathbb{R}^{r_n\times r_n}, r_n\in \mathbb N.\label{ODEs Moments}
 \end{equation}
 In particular, the moments of even order of the form \(\Phi_{2k,2n-2k}\) satisfy \eqref{ODEs Moments} with \(r_{2n} = n+1\) and
\begin{align*}
    &\mathbf{A}_{2n} = \begin{pmatrix}
    \Phi_{2n,0} \\  \vdots \\  \Phi_{2n-2k,2k} \\ \vdots \\ \Phi_{0,2n}, \end{pmatrix}, \: M_{2n} =  \begin{pmatrix}
    a_0 &  b_0 \\
     c_1 & a_1 & b_1 \\
      & \ddots & \ddots & \ddots &  \\
     &  &c_n & a_n
\end{pmatrix} ,
\end{align*}
where the entries of $M_{2n}$ are given by
\begin{align*}
     a_k &= 2n(1+ \lambda \delta^2)  + 2k(\lambda - 1 + 4(n-k)\lambda\delta^2 ),  \\
     b_k &= -(2n-2k)(2n-2k-1)\lambda^2 \delta^2 , \\
      c_k &= -(2k)(2k-1) \delta^2,
\end{align*}
for \(k=0, \dots, n\) and the entries of \(\mathbf{B}_{2n}\) for are given by 
\[
(\mathbf{B}_{2n})_k = -\frac{b_k}{(\lambda \delta)^2} \Phi_{2n-2k-2,2k} -\frac{c_k}{\delta^2} \Phi_{2n-2k,2k-2}, \quad k=0, \dots, n.
\]

\end{proposition}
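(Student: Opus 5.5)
The plan is to apply the generator of \eqref{Reversible SDE 2d} to monomials and then use Dynkin's formula. Writing \eqref{Reversible SDE 2d} in It\^o form, the drift is $-B\mathbf{x}$. The diffusion matrix is $2I + 2\delta^2 (JS\mathbf{x})(JS\mathbf{x})^T$, with $JS\mathbf{x} = (\lambda y, -x)^T$ for $J$ the symplectic matrix. The generator $\mathcal{R} = \mathcal{L} + \mathcal{A}_g^2$ therefore reads, for smooth $\phi$,
\[
\mathcal{R}\phi = -(1+\lambda\delta^2)\, x\,\partial_x\phi - \lambda(1+\delta^2)\, y\,\partial_y\phi + \Delta\phi + \delta^2\left(\lambda^2 y^2\,\partial_{xx}\phi - 2\lambda xy\,\partial_{xy}\phi + x^2\,\partial_{yy}\phi\right).
\]
This can be checked either directly from the It\^o quadratic variation or by expanding $\mathcal{A}_g^2\phi = g\cdot\nabla(g\cdot\nabla\phi)$. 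The first-order part of $\mathcal{A}_g^2$ produces exactly the $\delta^2\lambda$ contributions already absorbed in $B_\delta$.

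Next we compute $\mathcal{R}\phi_{p,q}$ for $\phi_{p,q} = x^p y^q$ with $p+q=n$. The first-order terms and the mixed term $-2\lambda\delta^2 xy\,\partial_{xy}$ return a multiple of $\phi_{p,q}$. The $\delta^2$ second-order terms map $x^py^q$ to $x^{p-2}y^{q+2}$ and $x^{p+2}y^{q-2}$, which are still of degree $n$. The Laplacian maps it to $x^{p-2}y^q$ and $x^py^{q-2}$, which are of degree $n-2$. Hence $\mathcal{R}$ maps $\mathcal{P}^n$ into itself, preserves the parity of each exponent, and sends degree-$n$ monomials to degree-$n$ monomials plus lower-order terms. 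Taking expectations gives a closed linear system \eqref{ODEs Moments} for the degree-$n$ moments, whose forcing $\mathbf{B}_n$ involves only moments of degree $n-2$.

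For the even block we set $p=2n-2k$ and $q=2k$. The diagonal coefficient is
\[
p(1+\lambda\delta^2) + q\lambda(1+\delta^2) + 2\lambda\delta^2 pq,
\]
which simplifies to $a_k$. The coefficient of $\Phi_{p-2,q+2}$ (index $k+1$) is $\lambda^2\delta^2 p(p-1)$, giving $b_k$ after the sign change from writing $-M_{2n}$. The coefficient of $\Phi_{p+2,q-2}$ (index $k-1$) is $\delta^2 q(q-1)$, giving $c_k$. The Laplacian contributes $p(p-1)\Phi_{p-2,q} + q(q-1)\Phi_{p,q-2}$, which is exactly $-b_k/(\lambda\delta)^2\,\Phi_{2n-2k-2,2k} - c_k/\delta^2\,\Phi_{2n-2k,2k-2}$. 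The boundary entries vanish automatically since $b_n = c_0 = 0$, so $M_{2n}$ is tridiagonal with $r_{2n} = n+1$.

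The only nonroutine point is justifying $\frac{d}{dt}\mathbb E[\phi(\mathbf{X}_t)] = \mathbb E[\mathcal{R}\phi(\mathbf{X}_t)]$ for polynomial $\phi$. The plan is to use a localization by stopping times $\tau_R = \inf\{t : |\mathbf{X}_t| \ge R\}$, apply It\^o's formula, and pass to the limit $R\to\infty$. This requires finite polynomial moments of all orders uniformly on compact time intervals. These follow from a Gronwall argument on $\mathbb E|\mathbf{X}_t|^{2m}$, since the coefficients are linear (affine noise). Once the local martingale term is shown to have zero expectation, the stated ODE system follows.
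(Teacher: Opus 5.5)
Your proposal is correct and follows the paper's route. The paper applies It\^o's formula to $x^k y^l$ using the quadratic variations of \eqref{Reversible SDE 2d}, takes expectations to obtain the same scalar ODE for $\Phi_{k,l}$, and reads off $a_k$, $b_k$, $c_k$ and $\mathbf{B}_{2n}$ exactly as you do; your localization and moment-bound argument for discarding the martingale terms is an extra the paper leaves implicit, and it tacitly needs the initial law to have finite moments of all orders, which holds for $\pi_0\in L^2(\pi^{-1})$ or a deterministic start.
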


\begin{proof}[Proof of Proposition \ref{prop ODEs Moments}]
    Using the Itô formula on $d\phi_{k,l}(\mathbf{X}_t)$, we find
\[
d\phi_{k,l}(\mathbf{X}_t)= \partial_x \phi_{k,l} dx_t + \partial_y \phi_{k,l} dy_t + \frac{1}{2} ( \partial_{xx} \phi_{k,l} (dx_t)^2 + 2 \partial_{xy} \phi_{k,l} (dx_t)(dy_t) + \partial_{yy} \phi_{k,l} (dy_t)^2).
\]
Recall that,
\[
\eqref{Reversible SDE 2d} \iff
\begin{cases}
    d x_t = -(1 + \delta^2 \lambda)x_tdt + \lambda \delta y_t \sqrt2 d\beta_t + \sqrt2 dW_1(t), \\
    d y_t = -(1 + \delta^2) \lambda y_tdt - \delta x_t \sqrt2 d\beta_t + \sqrt2 dW_2(t).
\end{cases}
\]
We can now explicitly compute $d\phi_{k,l}(\textbf{X}_t)$ using
\[
(dx_t)^2 = 2(\lambda^2 \delta^2 y_t^2  + 1 ) dt, \: (dx_t)(dy_t) = -2\lambda \delta^2 x_ty_t dt, \: (dy_t)^2 = 2( \delta^2 x_t^2 +1 )dt.
\]
Finally we obtain

\begin{align*}
        d\phi_{k,l}(\textbf{X}_t) =& -k (1+\delta^2 \lambda) x_t^{k}y_t^l dt + k \lambda \delta x_t^{k-1}y_t^{l+1} \sqrt2 d\beta_t + \sqrt2 k x_t^{k-1}y_t^l dW_1(t) \\
     &-l (1+\delta^2) \lambda x_t^{k}y_t^l dt - l  \delta x_t^{k+1}y_t^{l-1} \sqrt2 d\beta_t + \sqrt2 l x_t^{k}y_t^{l-1} dW_2(t) \\
     &+k(k-1) [\lambda^2 \delta^2 x_t^{k-2}y_t^{l+2} + x_t^{k-2}y_t^{l}]dt + l(l-1) [\delta^2 x_t^{k+2}y_t^{l-2} + x_t^{k}y_t^{l-2}]dt \\
     &- 2kl \lambda \delta^2 x_t^{k}y_t^{l}dt.
\end{align*}

After taking the expectation, we find out that \(\Phi_{k,l}\) satisfies the ODE
\begin{align*}
    \frac{d}{dt}\Phi_{k,l} =& -(k(1+\delta^2 \lambda) + l (1+\delta^2) \lambda + 2kl \lambda \delta^2  ) \Phi_{k,l} + k(k-1)\lambda^2 \delta^2  \Phi_{k-2,l+2}\\
        &+ l(l-1) \delta^2 \Phi_{k+2,l-2} + k(k-1) \Phi_{k-2,l} + l(l-1)\Phi_{k,l-2}.
\end{align*}
We can then write
    \[
    \frac{d}{dt}\mathbf{A}_{n} = -M_n\mathbf{A}_n + \mathbf{B}_n.
    \]
It is straightforward to verify that the moments of even order of the form \(\Phi_{2k,2n-2k}\) satisfy the ODE presented in the statement of Proposition \ref{prop ODEs Moments}.

\end{proof}

\begin{remark}\label{odd moments not treated}
    From the proof in Section \ref{Proof in a general dimension: Rayleigh estimation}, the moments containing odd powers are not in the kernel of the generator of the perturbation and from Proposition \ref{prop cv non asymptotic pairwise matching} we can conclude that their convergence rate is at least bigger than \(1 + \lambda\). Moreover, we show in the rest of this section that moments containing only even orders converge at a rate at least \(\gamma <1\). Hence, we do not need to analyze the other systems of ODEs containing odd powers.
\end{remark}

Using \eqref{ODEs Moments}, we can describe more precisely the convergence rate of the Stratonovich perturbed process. In the following proposition, we consider the generator arising from the Stratonovich perturbation
\[
\mathcal{R} = \mathcal{L} + \mathcal{A}^2.
\]
It is a reversible, self-adjoint in \( L^2(\pi)\) and negative semi-definite operator.

\begin{proposition}\label{solution of ODEs Moments}
    The spectral gap of \(\mathcal{R}\) restricted to polynomials of degree at most \(2n\) with only monomials of even degree is at least 
    \[
    \gamma = \min_{k=1,\dots,n} \Lambda(M_{2n}).
    \]
\end{proposition}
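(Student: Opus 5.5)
We argue by induction on the degree, using the fact that the space of polynomials of degree at most $2n$ built only from even monomials (even in $x$ and in $y$) is invariant under $\mathcal{R}$. This invariance is the content of the computation in Proposition~\ref{prop ODEs Moments}. Indeed, the Itô formula shows that $\mathcal{R}\phi_{2k,2m-2k}$ is a linear combination of $\phi_{2j,2m-2j}$ of the same total degree $2m$ and of $\phi_{2j,2m-2-2j}$ of degree $2m-2$. Concretely,
\[
\mathcal{R}\phi_{k,l} = -\big(k(1+\delta^2\lambda)+l(1+\delta^2)\lambda+2kl\lambda\delta^2\big)\phi_{k,l} + k(k-1)\lambda^2\delta^2\phi_{k-2,l+2} + l(l-1)\delta^2\phi_{k+2,l-2} + \text{lower order}.
\]
Hence in the graded basis $\{\phi_{2k,2m-2k}\}_{m\le n,\,0\le k\le m}$, the restriction of $\mathcal{R}$ is block lower triangular, with diagonal blocks $-M_{2m}^T$ for $m=0,\dots,n$. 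The $m=0$ block is the constant function, with eigenvalue $0$.

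\textbf{Step 1 (spectrum).} For a block triangular matrix, the spectrum is the union of the spectra of the diagonal blocks. So the spectrum of $\mathcal{R}$ on this finite-dimensional invariant space is $\{0\}\cup\bigcup_{m=1}^n \sigma(-M_{2m})$. Since $\mathcal{R}$ is self-adjoint in $L^2(\pi)$, its restriction to an invariant finite-dimensional subspace is self-adjoint for the induced inner product. Its eigenvalues are therefore real, and the eigenvalues of each $M_{2m}$ are real as well. This can also be seen directly: $M_{2m}$ is tridiagonal with $b_k c_{k+1}>0$, so it is similar to a symmetric matrix. In particular $\Lambda(M_{2m})$ is well defined.

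\textbf{Step 2 (gap).} The kernel of $\mathcal{R}$ on this space consists of the constants. This is because $\mathcal{R}\le\mathcal{L}\le 0$ and $\ker\mathcal{L}$ consists of the constants, so each $M_{2m}$ with $m\ge1$ has positive eigenvalues. The orthogonal complement of the constants in $L^2(\pi)$ within the space is invariant, because $\mathcal{R}$ is self-adjoint. On that complement, the spectral theorem for the self-adjoint restriction gives
\[
\langle -\mathcal{R}\phi,\phi\rangle_{L^2(\pi)} \ge \gamma\,\|\phi\|^2_{L^2(\pi)}, \qquad \gamma=\min_{m=1,\dots,n}\Lambda(M_{2m}).
\]
Here I read the statement's $\min_k \Lambda(M_{2n})$ as a minimum over the blocks $M_{2k}$, $k=1,\dots,n$. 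Equivalently, at the level of moments, the ODE~\eqref{ODEs Moments} is solved by variation of constants inductively in $m$, and each $\mathbf{A}_{2m}$ converges to its stationary value at a rate governed by $\Lambda(M_{2m})$ and by the rates of lower blocks.

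\textbf{Main obstacle.} The delicate point is Step 1's reality and positivity claim. The matrices $M_{2m}$ are not symmetric, and the monomial basis is not orthogonal in $L^2(\pi)$, so one must make sure the eigenvalues of the triangular representation coincide with the $L^2(\pi)$-spectrum. I would handle this by noting that the spectrum of a linear operator on a finite-dimensional space does not depend on the choice of basis. Self-adjointness then gives reality, and the blocks inherit real eigenvalues. As a fallback, one can use the explicit symmetrization of the tridiagonal $M_{2m}$ via $D=\mathrm{diag}(d_k)$ with $d_{k+1}^2/d_k^2 = c_{k+1}/b_k$, which is positive because $b_k,c_{k+1}<0$.
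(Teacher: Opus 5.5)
Your proposal is correct and follows the paper's argument. In the graded monomial basis (even in each variable), $\mathcal{R}$ is block triangular with diagonal blocks given by the $M_{2m}$, so its eigenvalues on this invariant space are those of the blocks, and reading the minimum as $\min_{k}\Lambda(M_{2k})$ is the intended meaning. Your extra use of self-adjointness of the restriction (kernel equal to the constants, invariance of the orthogonal complement of constants) to obtain $\langle -\mathcal{R}\phi,\phi\rangle \ge \gamma\|\phi\|^2$ fills in a step the paper leaves implicit.
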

\begin{proof}
Indeed, we observe that the Stratonovich generator \(\mathcal{R}\) restricted to polynomials of degree at most \(n\) is a linear operator represented by a block diagonal triangular matrix in the monomial basis.

    Let \(n \in \mathbb{N}\) and denote \(S_n\) the set spanned by monomials of degree \(n\), i.e
    \[
    S_n = \text{span}\{ x^ky^l | k+l = n\}. 
    \]
    Let \(k,l \in \mathbb{N}\) such that \(k+l = n\). A straightforward computation similar as in Proposition \ref{prop ODEs Moments} shows that \(\mathcal{R} x^k y^l\) can be written as a sum of monomials of degree \(n\) and a sum of polynomials of degree \(n-2\). In particular, \(\mathcal{R}(S_n) \subset S_n \cup S_{n-2}\). Therefore, the linear operator \(\mathcal{R}\) restricted to polynomials of degree at most \(n\) can be represented as a block triangular matrix. In particular, its eigenvalue lies in the diagonal block, which corresponds to the matrices \(-M_n\) from Proposition \ref{prop ODEs Moments}. The spectral gap of \(\mathcal{R}\) restricted to polynomials of degree at most \(2n\) with only monomials of even degree is at least 
    \[
    \gamma = \min_{k=1,\dots,n} \Lambda(M_{2n}).
    \] 
\end{proof}

We now analyze spectral properties of \(M_{2n}\). We exploit the fact that a change of basis does not alter the spectrum. Therefore, we select appropriate basis in which the spectral properties of \(M_{2n}\) become apparent.

\begin{lemma}\label{M_n similar to T_n}
    In the notation of Proposition \ref{prop ODEs Moments}, the matrix $M_{2n}$ associated with moments containing only even order is similar to a symmetric tri-diagonal matrix $T_{2n}$. Namely, there exists a invertible matrix \(P\) such that
    \[
    P^{-1} M_{2n} P  = T_{2n}.
    \]
    In particular, $M_{2n}$ has real eigenvalues for all \(n\).
\end{lemma}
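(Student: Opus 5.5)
Since $M_{2n}$ is tridiagonal, I will symmetrize it with a diagonal similarity. Take $P=\mathrm{diag}(p_0,\dots,p_n)$ with $p_k>0$. Then
\[
(P^{-1}M_{2n}P)_{k,k+1}=b_k\,\frac{p_{k+1}}{p_k},\qquad (P^{-1}M_{2n}P)_{k+1,k}=c_{k+1}\,\frac{p_k}{p_{k+1}},
\]
while the diagonal entries $a_k$ are unchanged. The two off-diagonal entries coincide exactly when
\[
\Big(\frac{p_{k+1}}{p_k}\Big)^2=\frac{c_{k+1}}{b_k},
\]
and then both equal $\pm\sqrt{b_k c_{k+1}}$. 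So everything reduces to checking that $b_k c_{k+1}>0$ for $k=0,\dots,n-1$.

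From Proposition \ref{prop ODEs Moments},
\[
b_k=-(2n-2k)(2n-2k-1)\lambda^2\delta^2,\qquad c_{k+1}=-(2k+2)(2k+1)\delta^2 .
\]
For $0\le k\le n-1$ we have $2n-2k\ge 2$, so $b_k<0$; also $c_{k+1}<0$ because $\lambda,\delta>0$. Hence $b_k c_{k+1}>0$, and I can set recursively
\[
p_0=1,\qquad p_{k+1}=p_k\sqrt{c_{k+1}/b_k}=\frac{p_k}{\lambda}\sqrt{\frac{(2k+2)(2k+1)}{(2n-2k)(2n-2k-1)}} .
\]
This gives a well-defined invertible diagonal $P$. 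The resulting $T_{2n}$ is symmetric tridiagonal, with diagonal $a_k$ and off-diagonal entries $-\sqrt{b_kc_{k+1}}=-\lambda\delta^2\sqrt{(2n-2k)(2n-2k-1)(2k+2)(2k+1)}$. Being real symmetric, $T_{2n}$ has real spectrum, and so does the similar matrix $M_{2n}$.

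The only step that needs care is the sign condition $b_kc_{k+1}>0$. It holds for every index because the only $b$ that vanishes is $b_n$ and the only $c$ that vanishes is $c_0$, and neither enters a product $b_kc_{k+1}$ with $0\le k\le n-1$. The remaining work is the routine recursive choice of the $p_k$ above.
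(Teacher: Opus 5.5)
Your proof is correct and matches the paper's: you symmetrize $M_{2n}$ with the diagonal similarity $p_k=\sqrt{c_1\cdots c_k/(b_0\cdots b_{k-1})}$, which is exactly the paper's choice of $P$, and you obtain the same off-diagonal entries $-\sqrt{b_kc_{k+1}}$. Your explicit check that $b_kc_{k+1}>0$ for $0\le k\le n-1$, which makes the square roots real and nonzero, is a point the paper leaves implicit.
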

\begin{proof}[Proof of Lemma \ref{M_n similar to T_n}]
    Consider the change of basis induced by the following matrix 
    \[ P \coloneqq \text{diag}(\delta_0, \dots,\delta_n), \text{ with }
\delta_k = \begin{cases}
    1, \text { if } k = 0, \\
    \sqrt{ \frac{c_{k} \cdots c_{1} }{b_{k-1}\cdots b_0}}, \text{ else}.
\end{cases}
\]
We can now consider the matrix $T_{2n} = P^{-1} M_{2n} P $. We have
\[
T_{2n} = \begin{pmatrix}
    a_0 & -\sqrt{b_0 c_1}  \\
    -\sqrt{b_0 c_1} & a_1 & -\sqrt{b_1 c_2}   \\
     & \ddots & \ddots & \ddots  \\ 
     &&&& -\sqrt{b_{n-1}c_n} \\
     & && -\sqrt{b_{n-1}c_n} & a_n
\end{pmatrix},
\]
which is indeed a symmetric tri-diagonal matrix. The spectral theorem implies that \(T_{2n}\) has real eigenvalues. Since, \(M_{2n}\) is similar to \(T_{2n}\), the two matrices have the same spectrum, hence $M_{2n}$ has real eigenvalues for all \(n\).
\end{proof}

Before continuing, we computed the smallest eigenvalue of $T_{2n}$ as a function of $n$ and presented the result in the following figure. We consider the case \(\lambda = 0\), by setting \(\delta = \lambda^{-1/2}\) and taking the limit as \(\lambda \to 0^+\) of the coefficient in the matrix \(T_{2n}\). This limit can be rigorously justified using arguments presented in the proof of Lemma \ref{lemma gap M_n}.
\begin{figure}[H]
\centering
        \includegraphics[width=0.49\linewidth]{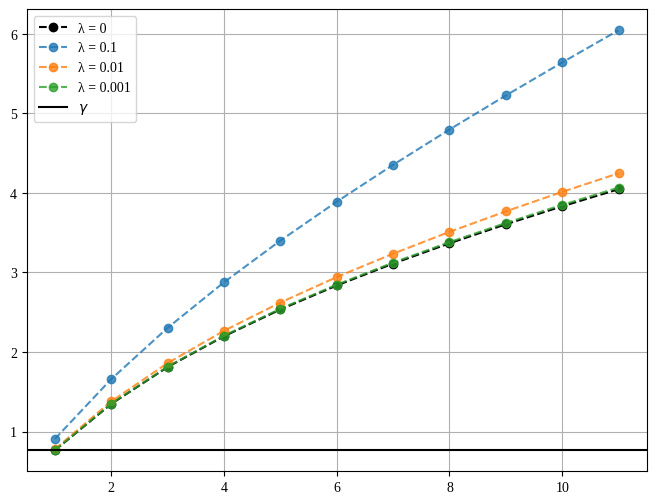}
        \includegraphics[width=0.49\linewidth]{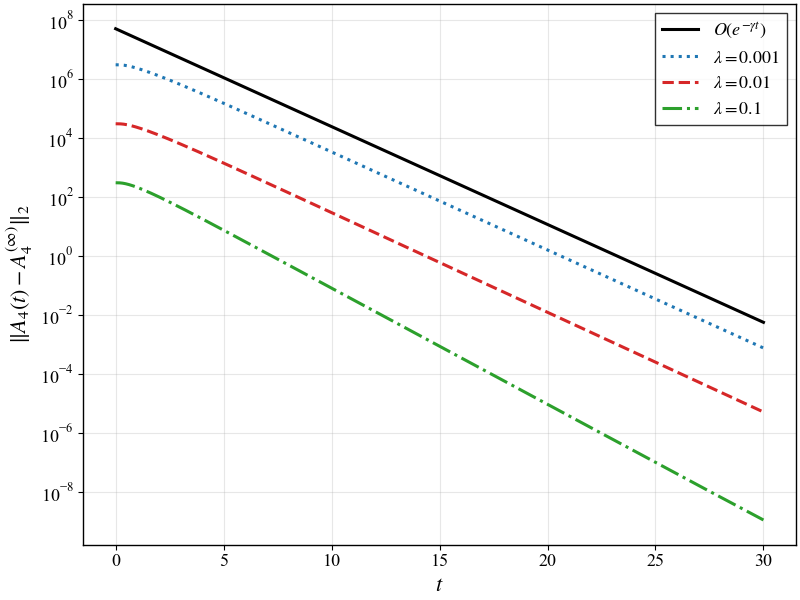}
        \caption{Left: Smallest eigenvalue of the matrix \(T_{2n}\) as function of \(n\). \newline
        Right: Dynamics \eqref{ODEs Moments} for different choices of \(\lambda\) with the critical scaling \(\delta = \lambda^{-1/2}\).}
\end{figure}

We observe that the spectral gap does not to converge to zero when $\lambda \to 0$ and that the convergence rate seems to increase when $n \to \infty$. The following result confirms that the convergence's rate does not vanish as \(\lambda \to 0^+, \text{ for all }~n~\geq~1\).

\begin{lemma} \label{lemma gap M_n}
Set $\delta = \lambda^{-1/2}$, then for all $\lambda >0,\: n \geq 1$ we have
\[
\Lambda(M_{2n}) \ge \inf_{\lambda>0} \, \Lambda(M_2) =  \gamma.
\]
\end{lemma}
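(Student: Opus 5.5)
The plan is to remove $\lambda$ from the problem first, and then bound the smallest eigenvalue of a fixed ($\lambda$-independent) family of Jacobi matrices uniformly in $n$.

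\emph{Step 1 (remove $\lambda$).} By Lemma \ref{M_n similar to T_n}, $\Lambda(M_{2n})=\Lambda(T_{2n})$, so I work with the symmetric matrix $T_{2n}$. Substituting $\delta^2=\lambda^{-1}$ into Proposition \ref{prop ODEs Moments} gives
\[
a_k=4n-2k+8k(n-k)+2k\lambda ,
\]
and the products of off-diagonal coefficients become
\[
b_{k-1}c_k=(2n-2k+2)(2n-2k+1)(2k)(2k-1).
\]
These products do not depend on $\lambda$. Hence
\[
T_{2n}=T^0_{2n}+2\lambda\,\mathrm{diag}(0,1,\dots,n),
\]
where $T^0_{2n}$ is the $\lambda=0$ matrix. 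The added diagonal matrix is positive semidefinite, so by the Rayleigh quotient characterization $\Lambda(T_{2n})\ge\Lambda(T^0_{2n})$ for every $\lambda>0$, with equality in the limit $\lambda\to0^+$.

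\emph{Step 2 (the case $n=1$).} Here
\[
T^0_2=\begin{pmatrix}4&-2\\-2&2\end{pmatrix},
\]
whose smallest eigenvalue is $3-\sqrt5$. Since $\Lambda(M_2)$ is nonincreasing as $\lambda\downarrow0$ by Step 1, this identifies $\inf_{\lambda>0}\Lambda(M_2)=3-\sqrt5=\gamma$, consistent with Lemma \ref{Smallest eigenvalue M}. The lemma therefore reduces to the $\lambda$-free inequality
\[
\Lambda(T^0_{2n})\ge 3-\sqrt5\qquad\text{for all } n\ge1 .
\]

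\emph{Step 3 (uniform bound via Collatz--Wielandt).} The matrix $T^0_{2n}$ is symmetric, tridiagonal, and has nonpositive off-diagonal entries; it is irreducible for $n\ge1$. By Perron--Frobenius applied to $cI-T^0_{2n}$, for any entrywise positive vector $v$,
\[
\Lambda(T^0_{2n})\ge\min_k \frac{(T^0_{2n}v)_k}{v_k}.
\]
So it suffices to exhibit a positive test vector $v^{(n)}$ with $(T^0_{2n}v)_k\ge\gamma v_k$ for every $k$.

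The diagonal grows like $8k(n-k)+4n$, while the off-diagonal terms are of size about $\sqrt{(2k)^2(2n-2k)^2}=4k(n-k)$ in the bulk. This suggests that the bulk rows have a large margin and that only the boundary rows $k=0$ and $k=n$ are tight; the $n=1$ case is the extreme instance. I would first try a ratio-type ansatz $v_{k+1}/v_k=r_k$. In this form the row inequalities read
\[
a_k-\sqrt{b_{k-1}c_k}\,r_{k-1}^{-1}-\sqrt{b_kc_{k+1}}\,r_k\ge\gamma ,
\]
and I would choose $r_k$ to balance the two off-diagonal contributions. The natural candidate is $r_k=\sqrt{\frac{(2n-2k)(2n-2k-1)}{(2k+2)(2k+1)}}\,\rho$, a binomial-type profile mimicking the Gaussian structure, with a constant $\rho$ tuned so that row $0$ gives exactly $4n-\sqrt{2n(2n-1)\cdot2}\,\rho\ge\gamma$. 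With this choice I would then verify the interior rows by elementary inequalities between $8k(n-k)$ and the geometric means of the neighbouring products.

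\emph{Main obstacle.} The difficulty is finding a single test vector that works for all $n$ simultaneously, in particular making the last row $k=n$ hold while $a_n=2n$ is comparatively small. If the binomial ansatz fails there, my fallback is to split
\[
T^0_{2n}=\sum_j E_j ,
\]
where each $E_j$ is a sum of $2\times2$ principal blocks coupling indices $k,k+1$, each block dominating (a rescaled copy of) $T^0_2$. Then $\Lambda(T^0_{2n})\ge\Lambda(T^0_2)=\gamma$ follows by additivity of quadratic forms. This approach requires distributing each diagonal entry $a_k$ among its two neighbouring blocks so that each block $\begin{pmatrix}\alpha&-\beta\\-\beta&\alpha'\end{pmatrix}$ satisfies $(\alpha-\gamma)(\alpha'-\gamma)\ge\beta^2$, which is again an explicit inequality in $k$ and $n$.
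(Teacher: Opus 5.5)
Your Steps 1 and 2 are correct and match the paper's reduction. The paper also passes to $\lambda=0$ using monotonicity of the smallest eigenvalue in the diagonal, and the case $n=1$ is Lemma~\ref{Smallest eigenvalue M}; your positive-semidefinite-perturbation justification is, if anything, cleaner. Your Step 3 criterion is also the paper's device: Collatz--Wielandt with a positive $v$ is exactly Gershgorin applied to $\mathrm{diag}(v)^{-1}T^0_{2n}\,\mathrm{diag}(v)$.

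\textbf{The gap is that the test vector you propose cannot work.} Your ratios give $v_k=\rho^k\binom{2n}{2k}^{1/2}$, which simply undoes the symmetrization of Lemma~\ref{M_n similar to T_n}. The row quantities are
\[
\frac{(T^0_{2n}v)_k}{v_k}=4n-2k+8k(n-k)-\frac{2k(2k-1)}{\rho}-(2n-2k)(2n-2k-1)\rho .
\]
In particular row $0$ gives $4n-2n(2n-1)\rho$, not $4n-\sqrt{2n(2n-1)\cdot 2}\,\rho$. Positivity of row $0$ forces $\rho<2/(2n-1)$, while positivity of row $n$ forces $\rho>2n-1$. These are incompatible for every $n\ge 2$; for $n=2$ you need both $8-12\rho>0$ and $4-12/\rho>0$. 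So the binomial ansatz fails at both ends for all $n\ge2$, not merely possibly at $k=n$. It works only for $n=1$, where $\rho=(1+\sqrt5)/2$ reproduces $3-\sqrt5$.

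Your fallback is a legitimate sufficient condition: blocks $\succeq\gamma I_2$ sum to certify $T^0_{2n}\succeq\gamma\,\mathrm{diag}(1,2,\dots,2,1)$. However, you give no distribution of the $a_k$ and no verification. The uniform-in-$n$ inequality, which is the whole content of the lemma, is therefore left unproved. What is missing is a concrete, nearly flat positive vector.

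The paper takes
\[
v_{k+1}/v_k=\eta_n(k)=\Bigl(1+\tfrac1n\Bigr)\sqrt{\tfrac{2(n-k)}{2(n-k)-1}} .
\]
This cancels the asymmetric factors, giving
\[
\sqrt{b_kc_{k+1}}\,\eta_n(k)=2\bigl(1+\tfrac1n\bigr)(n-k)\sqrt{2(k+1)(2k+1)},\qquad \frac{\sqrt{b_{k-1}c_k}}{\eta_n(k-1)}=\frac{\sqrt2}{1+1/n}(2n-2k+1)\sqrt{k(2k-1)} .
\]
Next use $\sqrt{2k^2-k}\le\sqrt2(k-\tfrac14)$ and $\sqrt{2k^2+3k+1}\le\sqrt2(k+1)$. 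This turns each row $k\ge1$ into a convex quadratic in $k$. For $n\ge 11$ its minimum on $[1,n]$ is at $k=n$, with value $\frac{5n}{2(n+1)}>2$; row $0$ is checked directly. For $2\le n\le 11$, the same construction with $1+\frac1{10}$ in place of $1+\frac1n$ gives all rows $\ge\frac{9}{11}$. Both bounds exceed $3-\sqrt5$.
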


\begin{proof}[Proof of Lemma \ref{lemma gap M_n}]
    We define
\begin{enumerate}
    \item[] \[ \alpha_k^{(n)} \coloneqq  a_k = 4n + 2k(4(n-k)+ \lambda-1), \: k = 0,\dots,n \]
    \item[] \[ \gamma_k^{(n)} \coloneqq \sqrt{b_k c_{k+1}} = 2\sqrt{(n-k)(2n-2k-1)(k+1)(2k+1)}, \: k = 0,\dots,n . \]
\end{enumerate}
Since $T_{2n}$ is a Jacobi matrix, its smallest eigenvalue is a non-decreasing function of the diagonal coefficients (see section II part 9 in \cite{Oscillation}). Thus, to prove the lemma, it suffices to consider the case \( \lambda = 0 \).

We aim to apply the lower bound from Gershgorin’s circle theorem. Let us define
\[ 
\eta_n(k) \coloneqq \left(1+ \frac{1}{n}\right) \sqrt{\frac{2(n-k)}{2(n-k)-1} },
\] 
and consider the matrix
\[
P\coloneqq \text{diag}(\delta_0, \dots,\delta_n), \text{ with }
\delta_k = \begin{cases}
    1, \text { if } k = 0, \\
    \prod_{j = 0}^{k-1} \eta_n(j), \text{ else},
\end{cases}
\]
such that the following change of basis gives
    \[P^{-1} (T_{2n})P = \begin{pmatrix} \alpha_0^{(n)} & \gamma_0^{(n)}\eta_n(0) & \\
    \gamma_0^{(n)}/\eta_n(0) & \alpha_1^{(n)} &  \gamma_0^{(n)}\eta_n(1) \\
    \quad \ddots & \quad\ddots & \quad\ddots \\ &&& \gamma_{n-1}^{(n)}\eta_n(n-1) \\
    &&\gamma_{n-1}^{(n)}/\eta_n(n-1) & \alpha_n^{(n)}
    \end{pmatrix}. \]
With this rescaling, we now apply the lower bound from the Gershgorin's circle theorem. Let us define
\[
G^{(n)}(k) \coloneqq \alpha_k^{(n)} - \left( \frac{\gamma_{k-1}^{(n)}}{\eta_n(k-1)}  +\gamma_k^{(n)} \eta_n(k)\right).
\]
Here we set $ \gamma_{-1}^{(n)} = 0$. We show in Lemma \ref{borne n ge 11} found in the Appendix \ref{Complementary results for the 2 dimensional cases} that for all \(n\geq 11\) and for all $k = 0,\dots,n$, $G^{(n)}(k) \geq 5/2$. In particular, for all \(n \geq 11\)
\[
\Lambda(M_{2n}) = \Lambda(P^{-1} (T_{2n})P) \geq \min_{n \ge 11, \:k \in [0,n]} G_k^{(n)} \ge \frac 5 2 \ge \gamma.
\]
For \( 1 \le n \le 11\), we consider
\[ 
\eta_n(k) \coloneqq \left(1+ \frac{1}{10}\right) \sqrt{\frac{2(n-k)}{2(n-k)-1} },
\] 
and by defining a similar transformation, we show in Lemma \ref{borne n leq 11} that the following holds for all \(2 \leq n \le 11\)
\[
\Lambda(M_{2n}) = \Lambda(P^{-1} (T_{2n})P)  \geq \min_{\substack{2\le n \le 11, \\ \:k \in [0,n]}}{G_k^{(n)}} \geq \frac{9}{11} \geq \gamma.
\]
We can finally conclude using Lemma \ref{Smallest eigenvalue M} that for all \(n \geq 1\)
\[
\Lambda(M_{2n})  \ge \inf_{\lambda >0} \,\Lambda(M_2) = \gamma.
\]
\end{proof}
\begin{remark}
The proof of Lemma \ref{lemma gap M_n} relies on the choice of the rescaling parameter \(\eta\). Indeed, applying the theorem directly to \( T_{2n} \) yields a negative value, which is useless, since we know that \( T_{2n} \) is positive definite. Different choices of \(\eta\) are possible. For instance, we initially considered
\[ 
\eta_n(k) \coloneqq \sqrt{\frac{2(n-k)}{2(n-k)-1} },
\] 
which leads to the weaker result \(\Lambda(M_{2n}) \ge\dfrac{1}{2}\). Moreover, we can see numerically that with the choice
\[ 
\eta_n(k) \coloneqq \left(1+ \frac{\log(n)}{n}\right) \sqrt{\frac{2(n-k)}{2(n-k)-1} },
\] 
we could show that \( \Lambda(M_{2n}) \) is bounded below by an increasing function of \(n\). However, the choice of the proof allows simpler computations and is strong enough to show that the second moment is the slowest of all.
\end{remark}

We conclude this section by proving the result from Theorem \ref{Main Result 2d} on polynomials.

\begin{proposition}
    
\label{SP Gap on polynomials}
    The spectral gap of \(\mathcal{R}\) restricted to the space of polynomial \(\mathcal{P}\) is at least \(\gamma\). In particular, we have
    \[
        \lVert e^{t\mathcal{R}}P - \pi(P) \rVert_{L^2(\pi)} \leq \lVert P - \pi(P) \rVert_{L^2(\pi)} e^{-\gamma t}, \text{ for all } P \in \mathcal{P}.
    \]
\end{proposition}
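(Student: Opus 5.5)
Since $\mathcal{R}=\mathcal{L}+\mathcal{A}^2$ is self-adjoint and negative semi-definite in $L^2(\pi)$, the plan is to reduce the semigroup estimate to a Rayleigh-quotient bound on a finite-dimensional invariant subspace. Fix $P\in\mathcal{P}$ of degree at most $N$. By the argument in the proof of Proposition \ref{solution of ODEs Moments}, $\mathcal{R}$ maps $\mathcal{P}^N$ into itself. The space $\mathcal{P}^N$ therefore carries a finite-dimensional self-adjoint restriction, with real eigenvalues $0=\mu_0\ge -\mu_1\ge\dots$ and an $L^2(\pi)$-orthonormal eigenbasis. The eigenvalue $0$ is attained only on constants, since $\mathcal{R}$ is dissipative: $\langle -\mathcal{R}\phi,\phi\rangle_\pi=\|\nabla\phi\|^2_\pi+\|\mathcal{A}\phi\|^2_\pi$, which vanishes only for constants. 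Because $\pi(\mathcal{R}\phi)=0$, the mean-zero polynomials $\mathcal{P}^N_0$ form an invariant subspace orthogonal to the constants. It then suffices to show that every eigenvalue of $-\mathcal{R}|_{\mathcal{P}^N_0}$ is at least $\gamma$. Indeed, expanding $P-\pi(P)$ in the eigenbasis gives $\|e^{t\mathcal{R}}(P-\pi(P))\|_\pi\le e^{-\gamma t}\|P-\pi(P)\|_\pi$, and $e^{t\mathcal{R}}P-\pi(P)=e^{t\mathcal{R}}(P-\pi(P))$ because $e^{t\mathcal{R}}1=1$.

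To bound these eigenvalues, we use the block-triangular structure. We order the monomial basis by degree and split it by the parity pattern of the exponents $(k,l)$. From the ODE in Proposition \ref{prop ODEs Moments}, $\mathcal{R}(x^ky^l)$ is a combination of $x^ky^l$, $x^{k\mp2}y^{l\pm2}$ and degree-$(n-2)$ terms, so the parities of $k$ and $l$ are preserved. Hence $\mathcal{R}$ on $\mathcal{P}^N$ is block triangular with diagonal blocks $-M_n^{(p)}$, indexed by degree $n$ and parity class $p$. The spectrum of $-\mathcal{R}|_{\mathcal{P}^N_0}$ is contained in the union of the spectra of these diagonal blocks for $n\ge1$; the constants are removed, and they correspond to the $n=0$ block with eigenvalue $0$. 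For the even-even blocks, which are exactly the matrices $M_{2n}$, Lemma \ref{M_n similar to T_n} gives real eigenvalues and Lemma \ref{lemma gap M_n} gives $\Lambda(M_{2n})\ge\gamma$ for all $n\ge1$. The remaining blocks, odd-odd, odd-even and even-odd, are the ones Remark \ref{odd moments not treated} dismisses by appeal to the later section.

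I expect the main obstacle to be treating these remaining blocks without relying on results stated later. I would first try to handle them directly, exactly as for $M_{2n}$. Each is again tridiagonal, with diagonal $k(1+\lambda\delta^2)+l(1+\delta^2)\lambda+2kl\lambda\delta^2$ and off-diagonals $k(k-1)\lambda^2\delta^2$ and $l(l-1)\delta^2$, whose products are nonnegative. The same diagonal similarity therefore symmetrizes each block into a Jacobi matrix with real spectrum. With $\delta=\lambda^{-1/2}$, the monotonicity in the diagonal coefficients lets us set $\lambda=0$, and a Gershgorin estimate with a rescaling analogous to $\eta_n$ should then give a lower bound.

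A more robust route is a variational one. For a polynomial $\phi$ with an odd power in $x$ or in $y$, the symmetry $x\mapsto -x$ or $y\mapsto -y$ leaves $\pi$ and $\mathcal{R}$ invariant, so $\phi$ lies in an $L^2(\pi)$-invariant subspace of functions odd in some variable. On such functions one can use the Poincaré inequality for $\mathcal{L}$ together with the contribution $\|\mathcal{A}\phi\|^2_\pi$ to show $\langle -\mathcal{R}\phi,\phi\rangle_\pi\ge(1+\lambda)\|\phi\|^2_\pi\ge\gamma\|\phi\|^2_\pi$, which matches the rate claimed in the remark. If neither route closes cleanly, I would fall back on the later Rayleigh estimate, as the paper's remark does.

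Combining the bounds over all parity classes and degrees $1\le n\le N$ yields the spectral gap $\gamma$ on $\mathcal{P}$, and the semigroup inequality follows as described above.
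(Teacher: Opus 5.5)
Your proposal is correct and follows the paper's argument: $\mathcal{R}$ leaves $\mathcal{P}^N$ invariant, it is block triangular in the monomial basis, Lemma~\ref{lemma gap M_n} gives $\Lambda(M_{2n})\ge\gamma$ on the even blocks, the remaining parity classes are handled by the Rayleigh bounds of Section~\ref{Proof in a general dimension: Rayleigh estimation} as in Remark~\ref{odd moments not treated}, and the decay estimate follows (your eigenbasis expansion is equivalent to the paper's Gr\"onwall step). Your reflection idea also tightens one point: the classes odd in $x$ already satisfy $\langle -\mathcal{L}\phi,\phi\rangle_\pi\ge\|\phi\|_\pi^2>\gamma\|\phi\|_\pi^2$, and this includes odd--odd polynomials such as $xy$, which sit in even chaos and so are not reached directly by the odd-chaos estimate invoked through Proposition~\ref{prop cv non asymptotic pairwise matching}; only the even--odd class genuinely needs the bound $\|\mathcal{A}\phi\|_\pi\ge\|\phi\|_\pi$ on odd chaos from Section~\ref{Spectral gap on Bk}.
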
 
\begin{proof}[Proof of Proposition \ref{SP Gap on polynomials}]
Using Lemma \ref{lemma gap M_n}, Remark \ref{odd moments not treated} and Proposition \ref{solution of ODEs Moments}, the spectral gap of \(\mathcal{R}\) restricted to the space of polynomial \(\mathcal{P}\) is at least \(\gamma\). We now show that this implies the exponential decay in the statement of the proposition. 

Let \(P \in \mathcal{P}\). If \(P\) is a constant, the bound is trivial. Let \(P \in \mathcal{P}\) be a non-constant function and define
    \[ U(t) = \lVert u(t) \rVert_{L^2(\pi)}^2 \coloneqq \lVert e^{t\mathcal{R}}P - \pi(P) \rVert_{L^2(\pi)}^2.\]
    To prove the result, we bound the derivative of \(U\) and use the Grönwall lemma. We can see that
    \[
    \frac{d}{dt}U(t) =2\inner{\frac{d}{dt}u}{u}
    \]
    By Kolmogorov's equations, we have
    \[
    2\inner{\frac{d}{dt}u}{u} =2\inner{\mathcal{R}u}{u}.
    \]
    Note that, since \(P\) is not a constant, \(u \neq 0\). Since the operator is symmetric and negative definite, we can use the fact that the Rayleigh quotient of \(u\in \mathcal{P}_0\) gives a bound on the spectral gap. For instance,
    \[
    \frac{\inner{\mathcal{R}u}{u}}{\inner{u}{u}} \leq -\gamma.   \]
    So we find
    \[
    2\inner{\mathcal{R}u}{u} \leq -2\gamma \inner{u}{u} = -2\gamma U(t).
    \]
    By the Grönwall lemma, we have
    \[
    U(t) \leq U(0)e^{-2\lambda t}.
    \]
    And we can conclude, since taking the square root on both sides yields
    \[
    \lVert e^{t\mathcal{R}}P - \pi(P) \rVert_{L^2(\pi)} \leq \lVert P - \pi(P) \rVert_{L^2(\pi)} e^{-\gamma t}.
    \]
\end{proof}

\subsection{Extension to general observables} \label{Extension to general observables}
Consider the generator arising from the Stratonovich perturbation
\[
\mathcal{R} = \mathcal{L} + \mathcal{A}^2.
\]
It is a reversible, self-adjoint in \( L^2(\pi)\) and negative semi-definite operator. What we have shown in the previous sections that, for all polynomials \(P \in \mathcal{P} \), we have
\begin{equation}
    \lVert e^{t\mathcal{R}}P - \pi(P) \rVert_{L^2(\pi)} \leq \lVert P - \pi(P) \rVert_{L^2(\pi)} e^{-\gamma t}. \label{Bound polynomial}
\end{equation}
We want to extend this result to general observable and conclude that the spectral gap is at least~\( \gamma\). We will use the following lemmas.

\begin{lemma} \label{Constant}
Let \(\lambda \) be the spectral gap of \( \mathcal{R}\) in \(L^2(\pi)\), a symmetric negative semi-definite operator, then, for all \(\phi \in L^2(\pi) \), we have
    \[
    \lVert e^{t\mathcal{R}}\phi - \pi(\phi) \rVert_{L^2(\pi)} \leq  \lVert \phi - \pi(\phi) \rVert_{L^2(\pi)}  e^{-\lambda t}. 
    \] 
    
\end{lemma}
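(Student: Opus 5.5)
The plan is to prove Lemma~\ref{Constant} by the spectral theorem for the self-adjoint operator $\mathcal{R}$, or equivalently by the same energy/Grönwall argument used in Proposition~\ref{SP Gap on polynomials}, now on the whole space. Write $L^2_0(\pi)=\{\phi\in L^2(\pi) : \pi(\phi)=0\}$.

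The first step is to check that $L^2_0(\pi)$ and the constants are invariant under $e^{t\mathcal{R}}$. Since $\mathcal{R}\mathbb{1}=0$ and $\mathcal{R}$ is self-adjoint, $\pi(\mathcal{R}\phi)=\inner{\mathcal{R}\phi}{\mathbb{1}}=\inner{\phi}{\mathcal{R}\mathbb{1}}=0$ for $\phi$ in the domain. Hence $\pi(e^{t\mathcal{R}}\phi)=\pi(\phi)$: this follows by differentiating on the domain and then using density together with the contractivity of the semigroup, or directly from invariance of $\pi$. Consequently
\[
e^{t\mathcal{R}}\phi-\pi(\phi)=e^{t\mathcal{R}}\bigl(\phi-\pi(\phi)\bigr),
\]
and it suffices to prove $\lVert e^{t\mathcal{R}}u\rVert_{L^2(\pi)}\le e^{-\lambda t}\lVert u\rVert_{L^2(\pi)}$ for $u\in L^2_0(\pi)$.

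The second step uses the definition of the spectral gap. For the restriction $\mathcal{R}_0$ of $\mathcal{R}$ to the invariant subspace $L^2_0(\pi)$, the gap $\lambda$ means $\sigma(\mathcal{R}_0)\subset(-\infty,-\lambda]$, equivalently $\inner{-\mathcal{R}u}{u}\ge\lambda\lVert u\rVert^2$ for $u\in D(\mathcal{R})\cap L^2_0(\pi)$ (Rayleigh quotient). By the spectral theorem, with $E$ the spectral measure of $\mathcal{R}_0$,
\[
\lVert e^{t\mathcal{R}}u\rVert^2=\int_{(-\infty,-\lambda]}e^{2\mu t}\,d\langle E_\mu u,u\rangle\le e^{-2\lambda t}\lVert u\rVert^2 .
\]

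Alternatively, I would mimic the proof of Proposition~\ref{SP Gap on polynomials}. For $u$ in the domain set $U(t)=\lVert e^{t\mathcal{R}}u\rVert^2$. Then
\[
U'(t)=2\inner{\mathcal{R}e^{t\mathcal{R}}u}{e^{t\mathcal{R}}u}\le-2\lambda U(t),
\]
so Grönwall's lemma gives the bound. The estimate extends to all of $L^2_0(\pi)$ by density of the domain and boundedness of $e^{t\mathcal{R}}$.

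The only delicate point is the functional-analytic justification: that $\mathcal{R}$ is genuinely self-adjoint (not merely symmetric on smooth functions) so that $e^{t\mathcal{R}}$ and the spectral calculus make sense, and that differentiation is allowed in the Grönwall route. I would take self-adjointness and negativity as given by the theorem of~\cite{StraoAbd19} quoted in Section~\ref{R Perturbation}, via the Friedrichs extension of the Dirichlet form. With that granted, the spectral-theorem argument is immediate and avoids regularity issues, so I would present it as the main proof.
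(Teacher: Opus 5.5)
Your proposal is correct. Your fallback Grönwall argument is exactly the paper's proof. The paper sets $U(t)=\lVert e^{t\mathcal{R}}\phi-\pi(\phi)\rVert_{L^2(\pi)}^2$ and uses Kolmogorov's equation to get $U'(t)=2\inner{\mathcal{R}u}{u}$. It then bounds this by $-2\lambda U(t)$ via the Rayleigh quotient on $L^2_0(\pi)$ and concludes with Grönwall.

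Your main route, through the spectral measure of the restriction $\mathcal{R}_0$ to $L^2_0(\pi)$, is a genuinely different technique. It needs the full spectral theorem, and hence genuine self-adjointness of $\mathcal{R}$ rather than symmetry on smooth functions. In exchange, it requires no time differentiation, no appeal to Kolmogorov's equations, and no check that $e^{t\mathcal{R}}\phi$ stays in the domain so the form inequality can be applied along the trajectory.

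The Grönwall route instead uses only the quadratic-form bound $\inner{-\mathcal{R}u}{u}\ge\lambda\lVert u\rVert^2$ along the trajectory. This is why it transfers verbatim to Proposition~\ref{SP Gap on polynomials}, where the Rayleigh bound is only available on the $\mathcal{R}$-invariant space of polynomials.

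Finally, your first step, showing $\pi(e^{t\mathcal{R}}\phi)=\pi(\phi)$ so that $u(t)=e^{t\mathcal{R}}\bigl(\phi-\pi(\phi)\bigr)$ remains in $L^2_0(\pi)$, makes explicit something the paper uses tacitly when it applies the Rayleigh bound to $u(t)$.
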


\begin{proof}[Proof of Lemma \ref{Constant}]
The proof follows the line of Proposition \ref{SP Gap on polynomials}, but we state it for completeness.

    If \(\phi\) is a constant, the bound is trivial. Let \(\phi \in L^2(\pi)\) be a non-constant function and define
    \[ U(t) = \lVert u(t) \rVert_{L^2(\pi)}^2 \coloneqq \lVert e^{t\mathcal{R}}\phi - \pi(\phi) \rVert_{L^2(\pi)}^2.\]
    To prove the result, we bound the derivative of \(U\) and use the Grönwall lemma. We can see that
    \[
    \frac{d}{dt}U(t) =2\inner{\frac{d}{dt}u}{u}
    \]
    By Kolmogorov's equations, we have
    \[
    2\inner{\frac{d}{dt}u}{u} =2\inner{\mathcal{R}u}{u}.
    \]
    Note that, since \(\phi\) is not a constant, \(u \neq 0\). Since the operator is symmetric and negative definite, we can use the fact that the Rayleigh quotient of \(u\in L^2_0(\pi)\) gives a bound on the spectral gap. For instance,
    \[
    \frac{\inner{\mathcal{R}u}{u}}{\inner{u}{u}} \leq -\lambda.   \]
    So we find
    \[
    2\inner{\mathcal{R}u}{u} \leq -2\lambda \inner{u}{u} = -2\lambda U(t).
    \]
    By the Grönwall lemma, we have
    \[
    U(t) \leq U(0)e^{-2\lambda t}.
    \]
    And we can conclude, since taking the square root on both sides yields
    \[
    \lVert e^{t\mathcal{R}}\phi - \pi(\phi) \rVert_{L^2(\pi)} \leq \lVert \phi - \pi(\phi) \rVert_{L^2(\pi)} e^{-\lambda t}.
    \]
\end{proof}

\begin{lemma} \label{Bound spectral gap}
    Suppose that
    \[
    \lVert e^{t\mathcal{R}}\phi - \pi(\phi) \rVert_{L^2(\pi)} \leq  \lVert \phi - \pi(\phi) \rVert_{L^2(\pi)} e^{-\gamma t}, \text{ for all } \phi \in L^2(\pi).
    \]
    Then the spectral gap \(\lambda\) of \( \mathcal{R}\) is at least \(\gamma\).
\end{lemma}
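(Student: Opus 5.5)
The plan is to prove the contrapositive via the spectral theorem for the self-adjoint, negative semi-definite operator \(\mathcal{R}\) on \(L^2(\pi)\). Write \(L^2_0(\pi)=\{\phi\in L^2(\pi):\pi(\phi)=0\}\). This subspace is invariant under \(e^{t\mathcal{R}}\), since \(\pi\) is invariant and \(\pi(e^{t\mathcal{R}}\phi)=\pi(\phi)\). For every \(\phi\), the function \(\phi-\pi(\phi)\) lies in \(L^2_0(\pi)\) and \(e^{t\mathcal{R}}\phi-\pi(\phi)=e^{t\mathcal{R}}(\phi-\pi(\phi))\). The hypothesis is therefore equivalent to the operator-norm bound
\[
\lVert e^{t\mathcal{R}}|_{L^2_0(\pi)}\rVert \le e^{-\gamma t}, \quad t\ge 0.
\]
The spectral gap \(\lambda\) is the infimum of \(\sigma(-\mathcal{R}|_{L^2_0(\pi)})\), which equals the infimum of the Rayleigh quotient \(-\inner{\mathcal{R}u}{u}/\inner{u}{u}\) over \(u\in D(\mathcal{R})\cap L^2_0(\pi)\setminus\{0\}\).

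The key step uses the spectral mapping for self-adjoint semigroups. Let \(E\) be the spectral measure of \(-\mathcal{R}\) restricted to \(L^2_0(\pi)\). Then
\[
\lVert e^{t\mathcal{R}}u\rVert^2=\int_{[0,\infty)} e^{-2t\mu}\,d\langle E_\mu u,u\rangle ,
\]
so \(\lVert e^{t\mathcal{R}}|_{L^2_0}\rVert=e^{-t\lambda}\), where \(\lambda=\inf\sigma\). Comparing this with the hypothesis gives \(e^{-t\lambda}\le e^{-t\gamma}\), hence \(\lambda\ge\gamma\).

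To make this elementary and avoid citing the norm identity, I argue by contradiction. Suppose \(\lambda<\gamma\) and pick \(\mu_0\in(\lambda,\gamma)\). The spectral projection \(E([\lambda,\mu_0])\) is nonzero because \(\lambda\) lies in the spectrum. Take a nonzero \(u\) in its range. Then \(\lVert e^{t\mathcal{R}}u\rVert\ge e^{-\mu_0 t}\lVert u\rVert\), which exceeds \(e^{-\gamma t}\lVert u\rVert\) for every \(t>0\), contradicting the hypothesis. As an alternative avoiding spectral projections, one can differentiate at \(t=0\). For \(u\in D(\mathcal{R})\cap L^2_0\), the hypothesis gives \(\lVert e^{t\mathcal{R}}u\rVert^2\le e^{-2\gamma t}\lVert u\rVert^2\), with equality at \(t=0\). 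Comparing right derivatives at \(t=0\) yields \(2\inner{\mathcal{R}u}{u}\le -2\gamma\lVert u\rVert^2\). Taking the infimum of the Rayleigh quotient then gives \(\lambda\ge\gamma\) directly. I would present this derivative argument as the main proof, since it mirrors Lemma \ref{Constant}.

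The only delicate point, and the one I expect to be the main obstacle, is the domain issue. The Rayleigh characterization of the gap requires the infimum over the form or operator domain, so I must justify that restricting to \(u\in D(\mathcal{R})\) loses nothing. This follows because \(D(\mathcal{R})\) is a core for the quadratic form of a self-adjoint operator. The derivative \(\frac{d}{dt}\lVert e^{t\mathcal{R}}u\rVert^2|_{t=0}=2\inner{\mathcal{R}u}{u}\) is valid for \(u\in D(\mathcal{R})\) by the strong differentiability of the semigroup there. I would state these standard facts and then conclude.
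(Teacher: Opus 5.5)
Your proof is correct, but your main argument takes a different route from the paper.

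\textbf{The paper's proof.} It is a one-liner. It takes a normalized eigenfunction $\psi$ with $\mathcal{R}\psi=-\lambda\psi$, so that $\pi(\psi)=\inner{\psi}{\mathbf{1}}_\pi=0$. Plugging $\psi$ into the hypothesis gives $e^{-\lambda t}\le e^{-\gamma t}$ for all $t$, and the conclusion follows. This tacitly assumes that the bottom of the spectrum of $-\mathcal{R}$ on $L^2_0(\pi)$ is an eigenvalue, which the lemma does not state. In the paper's Gaussian setting this is true: each Wiener chaos $\mathcal{H}_k$ is finite dimensional and $\mathcal{R}$-invariant, and $-\mathcal{R}\succeq-\mathcal{L}\succeq k s_d$ on $\mathcal{H}_k$, so the spectrum is discrete. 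But the proof does not justify it at that point.

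\textbf{Your spectral-projection argument.} This is the same mechanism, with an approximate eigenvector taken from the range of $E([\lambda,\mu_0])$ instead of an exact one. It therefore works for any self-adjoint nonpositive $\mathcal{R}$, with no discreteness assumption.

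\textbf{Your derivative argument.} This is genuinely different. It is the exact converse of Lemma \ref{Constant}, which passes from the Rayleigh bound to the semigroup bound via Gr\"onwall. Your argument goes the other way and produces the Rayleigh-quotient bound directly, which is how the paper defines the gap at the start of Section \ref{Proof in a general dimension: Rayleigh estimation}. It uses only strong differentiability of $t\mapsto e^{t\mathcal{R}}u$ for $u\in D(\mathcal{R})$ and the variational characterization of the gap.

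\textbf{The domain point.} The issue you flag is lighter than you suggest. For a self-adjoint operator bounded below, $\inf\sigma(-\mathcal{R}|_{L^2_0})$ already equals the infimum of the Rayleigh quotient over the operator domain $D(\mathcal{R})\cap L^2_0(\pi)$. This follows from the spectral theorem, since spectral subspaces of bounded intervals lie in the domain. So no form-core argument is needed.
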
 
\begin{proof}[Proof of Lemma \ref{Bound spectral gap}]
    Let \(\psi\) be the eigenvector corresponding to \(-\lambda\) in an ONB. We have
    \[
    e^{t\mathcal{R}} \psi = e^{-\lambda t} \psi, \: \pi(\psi) = \inner{\psi}{\mathbf{1}}_\pi = 0.
    \]
    We then have for all \(t\)
    \[
    e^{-\lambda t} =\lVert e^{t\mathcal{R}}\psi - \pi(\psi) \rVert_{L^2(\pi)} \leq \lVert \psi - \pi(\psi) \rVert_{L^2(\pi)} e^{-\gamma t} = e^{-\gamma t}. 
    \]
    This is possible if and only if \(\lambda \ge \gamma\)
\end{proof}

\begin{proof}[Proof of Theorem \ref{Main Result 2d}]
    
From Proposition \ref{SP Gap on polynomials}, we have the bound
    \[
        \lVert e^{t\mathcal{R}}P - \pi(P) \rVert_{L^2(\pi)} \leq \lVert P - \pi(P) \rVert_{L^2(\pi)} e^{-\gamma t}.
    \]
Since \( \mathcal{R} \) is a semi-definite negative operator, \(e^{t \mathcal{R}}\) is a bounded linear operator, so it is continuous. Moreover, it is known that the polynomials are dense in \(L^2(\pi)\). Using these both arguments, we can then extend the bound to a general observable \(\phi \in L^2(\pi)\). This leads to
\begin{equation}
        \lVert e^{t\mathcal{R}}\phi - \pi(\phi) \rVert_{L^2(\pi)} \leq  \lVert \phi - \pi(\phi) \rVert_{L^2(\pi)}  e^{-\gamma t}, \text{ for all } \phi \in L^2(\pi). 
\end{equation}
By Lemma \ref{Bound spectral gap}, this implies that the spectral gap \(\lambda\) is at least \(\gamma\). This finishes the proof of Theorem~\ref{Main Result 2d}.
\end{proof}

\section{Proof in general dimension: Rayleigh estimation}\label{Proof in a general dimension: Rayleigh estimation}
In this section, we consider the same setting as in Section \ref{Pairwise matching in a general dimension}. In order to prove Theorem \ref{Main result d=2n asymptotic}, we find a lower bound for the Rayleigh quotient. Indeed, for reversible dynamics, the spectral gap \(\lambda\) of the associated generator \(\mathcal{R}\) is given by the Rayleigh quotient
\[
\lambda= \inf_{ u \in L^2_0(\pi)} \frac{\langle u,-\mathcal{R}u\rangle_{L^2(\pi)}} {\langle u,u\rangle_{L^2(\pi)}} = \inf_{\substack{ u \in L^2_0(\pi) \\ \langle u,u\rangle = 1 }} {\langle u,-\mathcal{R}u\rangle_{L^2(\pi)} }.
\]
    In our case, \(\mathcal{R} = \mathcal{L} + \mathcal{A}_g^2\). Since \(\inf A + B \geq \inf A + \inf B\), we have the lower bound
    \[
    \lambda \geq
    \underbrace{\inf_{\substack{ u \in L^2_0(\pi) \\ \langle u,u\rangle = 1 }} {\langle  u,-\mathcal{L} u\rangle}}_{A} + \underbrace{\inf_{\substack{ u \in L^2_0(\pi) \\ \langle u,u\rangle = 1 }} {\langle u,-\mathcal{A}_g^2 u\rangle}}_{B}.
    \]
In the quadratic case, \((A)\) is well understood and corresponds to the unperturbed Langevin dynamics \eqref{OLD}, so the main effort is to understand \((B)\). Note that looking for the infimum on the whole space leads to unsatisfying result \(B = 0\).

In our analysis, we start by reducing the problem to a family of finite dimensional space \((\mathcal{H}_k)_{k\geq1}\). Then, in order to obtain a tangible bound, we estimate the Rayleigh quotient on the kernel \(A^k = \ker(\mathcal{A}) \cap \mathcal{H}_k\) and on its orthogonal complement \(B^k = (A^k)^\perp = \{ p \in \mathcal{H}_k : p \perp a, \forall a \in A^k \}\). I.e, we estimate
        \[
        \inf_{\substack{ u \in A^k \\ \langle u,u\rangle = 1 }} {\langle u,-\mathcal{R}u\rangle_{L^2(\pi)} }, \quad \text{and} \quad     \inf_{\substack{ u \in B^k \\ \langle u,u\rangle = 1 }} {\langle u,-\mathcal{R}u\rangle_{L^2(\pi)} }.
        \]
We then use these estimates to find a lower bound of the Rayleigh quotient restricted to the space \(\mathcal{P}_0\) of polynomials orthogonal to constant. Finally, we use a density argument to extend the result to the whole space as in Section \ref{Extension to general observables}.

In this section, we use the following notation for the Rayleigh quotient
    \[
    R_\delta(f) = \frac{\inner{f}{ -\mathcal{R}_\delta) f}}{\inner{f}{f}} = \frac{\inner{f}{(\nabla^\# \nabla + \delta^2 \mathcal{A}^\# \mathcal{A}) f}}{\inner{f}{f}},
    \]
where \(\mathcal{A} = JSx \cdot \nabla\), with \(J\) of the form \eqref{J pairwise matching}. Moreover, when \(\delta = 1\), we note \(R_1(f) = R(f)\).We also denote \(\delta\) instead of \(\hat \delta\), because we only used this notation in Section \ref{Pairwise matching in a general dimension} to avoid confusion with the scaling in two dimensional scaling from Section \ref{2D Results}. 

\subsection{Reduction to finite dimensional space}\label{section reduction to Wiener Chaos}
In order to reduce the problem to a finite dimensional space, we present a decomposition using the so called \(k\)-th Wiener chaos \(\mathcal{H}_k\). Despite their name, they are not related to chaos theory, but they were introduced by Wiener to decompose a Gaussian random variable into a sum of polynomials. See for instance \cite[Chapter~2]{Janson_1997} for a precise introduction. We now recall how these spaces are defined. Let us consider the one dimensional Hermite polynomials
\[
h_n(x) = (-1)^n e^{x^2/2}  \partial_x^n e^{-x^2/2},
\]
and the scaled and normalized \(d\)-dimensional version
\begin{equation}
    H_\alpha(\textbf{x}) =\prod_{i=1}^d \frac{1}{\sqrt{\alpha_i!}} h_{\alpha_i}(\sqrt{s_i} \textbf{x}_i), \quad  \alpha \in \mathbb{N}^d. \label{definition d-dimensional hermite}
\end{equation}
It appears that the polynomials \eqref{definition d-dimensional hermite} are eigenvectors of the operator \(\nabla^\# \nabla\) and they form an ONB in \(L^2(\pi)\). In particular, let us define the \(k\)-th Wiener chaos \(\mathcal{H}_k\) as
    \[
    \mathcal{H}_k = \text{ span}\{ H_\alpha \big| \:   \alpha \in \mathbb{N}^d, |\alpha| =\sum_{i=1}^d \alpha_i = k\}.
    \]
We have
\[
L^2_0(\pi) = \bigoplus_{k\geq 1} \mathcal{H}_k.
\]

\begin{lemma} \label{Wiener Chaos preservation}
    The generator \(\mathcal{A}\) preserves the \(n\)-th Wiener chaos for all \(n\in \mathbb{N}\). That is
    \[
    \mathcal{A}( \mathcal{H}_n) \subset \mathcal{H}_n.
    \]
    Therefore, \(\mathcal{R}(\mathcal{H}_m) \subset \mathcal{H}_m \).
\end{lemma}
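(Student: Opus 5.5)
We must show that $\mathcal A = JS\mathbf x\cdot\nabla$ maps $\mathcal H_n$ into itself, where $S=\mathrm{diag}(s_1,\dots,s_d)$ and $J$ is skew-symmetric.

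\textbf{Step 1: rescale to standard Gaussian coordinates.} Set $y_i=\sqrt{s_i}\,x_i$, that is, $\mathbf y=S^{1/2}\mathbf x$. Then $H_\alpha(\mathbf x)=\prod_i h_{\alpha_i}(y_i)/\sqrt{\alpha_i!}$, and $\mathcal H_n$ becomes the usual $n$-th Wiener chaos for the standard Gaussian in $\mathbf y$. Using $\nabla_{\mathbf x}=S^{1/2}\nabla_{\mathbf y}$, we get
\[
\mathcal A f = (JS\mathbf x)\cdot S^{1/2}\nabla_{\mathbf y}f=(S^{1/2}JS^{1/2}\mathbf y)\cdot\nabla_{\mathbf y} f = (\tilde J\mathbf y)\cdot\nabla_{\mathbf y}f,
\]
where $\tilde J = S^{1/2}JS^{1/2}$ is again skew-symmetric. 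So it suffices to show that a vector field $\tilde J\mathbf y\cdot\nabla$ with $\tilde J$ skew-symmetric preserves each standard chaos.

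\textbf{Step 2: express $\mathcal A$ through creation and annihilation operators.} Let $a_i=\partial_{y_i}$ and $a_i^\#=y_i-\partial_{y_i}$, its adjoint in $L^2$ of the standard Gaussian. Then $y_i=a_i^\#+a_i$, and
\[
\mathcal A=\sum_{i,j}\tilde J_{ij}\,y_j\,a_i=\sum_{i,j}\tilde J_{ij}\,a_j^\# a_i+\sum_{i,j}\tilde J_{ij}\,a_j a_i .
\]
The second sum is zero: the operators $a_ja_i$ commute, so $\sum_{i,j}\tilde J_{ij}a_ja_i$ is the contraction of a skew-symmetric tensor against a symmetric one. Each $a_j^\# a_i$ maps $\mathcal H_n$ into $\mathcal H_n$, because $a_i h_{\alpha}$ lowers $|\alpha|$ by one and $a_j^\#$ raises it by one. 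Concretely, in one variable $h_k'=k\,h_{k-1}$ and $(y-\partial_y)h_{k}=h_{k+1}$. Hence $\mathcal A(\mathcal H_n)\subset\mathcal H_n$.

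As a cross-check, one can compute directly on polynomials. $\mathcal A$ preserves the total degree in $\mathbf y$, so $\mathcal A(\mathcal P^n)\subset\mathcal P^n$. Skew-symmetry gives $\nabla\cdot(\tilde J\mathbf y\, e^{-|\mathbf y|^2/2})=0$, so $\mathcal A^\#=-\mathcal A$, and $\mathcal A^\#$ also preserves $\mathcal P^{n-1}$. Since $\mathcal H_n=\mathcal P^n\ominus\mathcal P^{n-1}$, for $f\in\mathcal H_n$ and $q\in\mathcal P^{n-1}$ we have $\langle\mathcal Af,q\rangle=-\langle f,\mathcal Aq\rangle=0$. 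Thus $\mathcal Af\in\mathcal H_n$. This second argument avoids computation and is probably the cleanest one to write down.

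\textbf{Step 3: the consequence for $\mathcal R$.} $\mathcal L=-\nabla^\#\nabla$ acts on $\mathcal H_m$ as multiplication on each $H_\alpha$ by $-\sum_i\alpha_is_i$, since the $H_\alpha$ are eigenfunctions. So $\mathcal L$ preserves $\mathcal H_m$. Then $\mathcal R=\mathcal L+\mathcal A^2$ preserves $\mathcal H_m$ as well.

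The only delicate point is the change of variables in Step 1, namely checking that the rescaled matrix $S^{1/2}JS^{1/2}$ stays skew-symmetric. Once that holds, the rest follows immediately.
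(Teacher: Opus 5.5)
Your proof is correct, and Step 2 is essentially the paper's argument: after the change of variables $\mathbf y = S^{1/2}\mathbf x$, the paper also writes $\mathcal A$ as a sum of terms $a_j^\# a_i$ that preserve the total Hermite degree. The paper does this only for the specific pairwise $J$ and does not say why the $a_j a_i$ terms vanish, whereas you justify that cancellation for any skew-symmetric $\tilde J$. Your orthogonal-complement cross-check (that $\mathcal A$ preserves $\mathcal P^n$ and $\mathcal A^\# = -\mathcal A$, hence $\mathcal A$ preserves $\mathcal P^n \ominus \mathcal P^{n-1}$) is also valid, and it avoids the Hermite ladder identities altogether.
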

\begin{proof}
    We consider the change of variable \(S^{1/2} \mathbf x = \mathbf y\). The operator \(\mathcal{A
    }\) becomes
    \[
    \mathcal{A} =\sum_{i=1}^n \mathbf y_{d+1-i} \partial_i - \mathbf y_{i} \partial_{d+1-i} = \sum_{i=1}^n a_{d+1-i}^\#a_i - a_{i}^\# a_{d+1-i},
    \]
    where we introduced \(a_i = \partial_i\) and \(a_i^\# = \mathbf y_i -\partial_i\). The Hermite polynomials satisfy
    \[
    a_i H_{\alpha} = \alpha_i H_{\alpha - e_i},
    \]
    where \(e_i\) is the \(i\)-th canonical vector in \(\mathbb{R}^d\). Moreover,
    \[
    a_i^\# H_{\alpha} = H_{\alpha + e_i}.
    \]
    In particular, the action of any pair \( a_{i}^\# a_{d+1-i}\) preserves the total degree. Hence so do \(\mathcal{A}\). Therefore, for any \(f \in \mathcal{H}_n\), we have \(\mathcal{A}f_n \in \mathcal{H}_n\). Finally, Hermite polynomials are eigenfunctions of the operator \(\mathcal{L}\). Hence, \(\mathcal{R}(\mathcal{H}_m) \subset \mathcal{H}_m \).
\end{proof}

\begin{proposition}\label{reduction to Wiener Chaos}
    For any \(f \in L^2_0(\pi)\backslash\{\mathbf{0}\}\) , we have
    \[
    R(f) \geq  \inf_{k \geq 1} \inf_{f_k \in \mathcal{H}_k} R(f_k).
    \]
\end{proposition}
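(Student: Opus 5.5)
The plan is to decompose $f$ along the Wiener chaos decomposition $L^2_0(\pi)=\bigoplus_{k\ge1}\mathcal{H}_k$ and use Lemma \ref{Wiener Chaos preservation}, which says that $\mathcal{R}$ (and $\mathcal{A}$) leave each $\mathcal{H}_k$ invariant. Write $f=\sum_{k\ge1} f_k$ with $f_k\in\mathcal{H}_k$ the orthogonal projection of $f$ onto $\mathcal{H}_k$. Since $f\neq 0$, at least one $f_k\neq 0$, and $\inner{f}{f}=\sum_k \inner{f_k}{f_k}$ by orthogonality.

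The key step is to show that the quadratic form also splits diagonally:
\[
\inner{f}{-\mathcal{R} f} = \sum_{k\ge 1} \inner{f_k}{-\mathcal{R} f_k}.
\]
First do this for $f$ a polynomial, hence a finite sum. Then $\mathcal{R}f_k\in\mathcal{H}_k$ by Lemma \ref{Wiener Chaos preservation}, so $\inner{f_j}{\mathcal{R}f_k}=0$ for $j\neq k$ by orthogonality of distinct chaoses. For a general $f$ in the form domain, use the Dirichlet form $\inner{f}{-\mathcal{R}f}=\norm{\nabla f}^2+\delta^2\norm{\mathcal{A}f}^2$. Both $\nabla^\#\nabla$ and $\mathcal{A}$ map $\mathcal{H}_k$ into itself; $\mathcal{A}$ is built from pairs $a^\#_i a_j$, and $\mathcal{A}^\#=-\mathcal{A}$ since $g$ is divergence free with respect to $\pi$. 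Hence $\mathcal{A}f=\sum_k\mathcal{A}f_k$ is an orthogonal sum and $\norm{\mathcal{A}f}^2=\sum_k\norm{\mathcal{A}f_k}^2$. Similarly $\norm{\nabla f}^2=\inner{f}{\nabla^\#\nabla f}=\sum_k (\text{eigenvalue})\norm{f_k}^2$, because Hermite polynomials are eigenfunctions. If $f$ is outside the form domain, $R(f)=+\infty$ and there is nothing to prove.

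Given the splitting, $R(f)$ is a weighted average of the $R(f_k)$:
\[
R(f)=\frac{\sum_k \inner{f_k}{f_k}\,R(f_k)}{\sum_k \inner{f_k}{f_k}} \ge \inf_{k:\,f_k\neq0} R(f_k) \ge \inf_{k\ge1}\inf_{f_k\in\mathcal{H}_k} R(f_k),
\]
where terms with $f_k=0$ are omitted. This is the elementary mediant inequality $\sum a_k/\sum b_k\ge\min a_k/b_k$ for $b_k>0$, and it extends to countable sums by the same argument.

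The main obstacle is the justification for infinite sums, namely exchanging $\mathcal{A}$ with the series. I would handle it by working first on $\mathcal{P}_0$, where everything is finite; this is all that the paper's strategy needs, since the subsequent density argument of Section \ref{Extension to general observables} extends the bound. Alternatively, one can note that $\mathcal{A}$ commutes with the projections onto $\mathcal{H}_k$, because it is a closed operator preserving each finite-dimensional chaos.
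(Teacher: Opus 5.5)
Your proposal is correct and follows the paper's own argument: expand $f$ over the chaoses $\mathcal{H}_k$, use Lemma~\ref{Wiener Chaos preservation} and the mutual orthogonality of the chaoses to make the quadratic form of $-\mathcal{R}$ diagonal, and conclude with the weighted-average inequality. Your justification of the infinite sums (via the Dirichlet form, or by first working on $\mathcal{P}_0$) is extra rigor that the paper omits. One correction: $\mathcal{H}_k$ is invariant under $\nabla^{\#}\nabla$ but is not an eigenspace unless all $s_i$ coincide, since $H_\alpha$ has eigenvalue $\sum_i \alpha_i s_i$, so the identity to use is $\norm{\nabla f}^2=\sum_k \norm{\nabla f_k}^2$ rather than a single eigenvalue per chaos.
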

\begin{proof}
    Since \(L^2_0(\pi)\) is a direct sum of \(\mathcal{H}_k\) for \(k\geq 1\), we can write any \(f \in L^2_0(\pi)\backslash\{\mathbf{0}\}\) as
    \[
    f = \sum_{k\geq 1} f_k, \quad f_k \in \mathcal{H}_k.
    \]
    Since the family \(( \mathcal{H}_k)_{k\ge1}\) is pairwise orthogonal and using Lemma \ref{Wiener Chaos preservation}, we obtain
    \[
    R(f) = \frac{\sum_{k \ge 1} \inner{f_k }{\mathcal{R}f_k}}{\sum_{k \ge 1} \inner{f_k }{f_k}} \geq \inf_{k \geq 1} \inf_{f_k \in \mathcal{H}_k} R(f_k).
    \]
\end{proof}

\subsection{Characterization of the kernel \texorpdfstring{$A^{k}$}{Ak}} \label{Characterization of the kernel\texorpdfstring{$A^{k}$}{Ak}}
\begin{proposition}[Characterization of $A^k$] \label{Characterization of $A^k$}
    The set
    \[
    A^{k} = \left\{ u\in\mathcal{H}_{k} | \: \forall \textbf x \in \mathbb{R}^d ,\,\nabla u(\textbf x) ^TJS \textbf x = 0 \right\},
    \]
    contains only polynomials with monomials of even degree. In fact, it is spanned by polynomials of the form 
    \[
    \Phi = \Phi(\mathcal{R}e(z_i\bar z_j),\mathcal{I}m(z_i\bar z_j)), \: \Phi \in \mathcal{P}^{k},
    \]
    where \(z_i = \sqrt{s_i}x_i +\mathrm{i}\sqrt{s_{d+1-i}}x_{d+1-i}\).
\end{proposition}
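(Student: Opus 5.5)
Set $\mathbf y = S^{1/2}\mathbf x$, as in the proof of Lemma \ref{Wiener Chaos preservation}. In these coordinates the measure $\pi$ becomes the standard Gaussian, each $\mathcal{H}_k$ is the span of homogeneous Hermite polynomials of degree $k$ in $\mathbf y$, and
\[
\mathcal{A} = \sum_{i=1}^n \left( \mathbf y_{d+1-i}\partial_{\mathbf y_i} - \mathbf y_i \partial_{\mathbf y_{d+1-i}}\right),
\]
up to the positive scalars coming from $a_i\sqrt{s_is_{d+1-i}}=1$. This is exactly the generator of a simultaneous rotation in each of the $n$ planes $(\mathbf y_i,\mathbf y_{d+1-i})$. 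Writing $z_i=\mathbf y_i+\mathrm{i}\,\mathbf y_{d+1-i}$, which is the paper's $z_i$, the flow of $\mathcal A$ is $z_i\mapsto e^{\mathrm{i}\theta}z_i$ with the \emph{same} angle $\theta$ for every $i$. So $\mathcal A$ generates the diagonal $U(1)$ action on $\mathbb C^n$, and $\ker\mathcal A$ consists of the functions invariant under this circle action.

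\emph{Step 1: reduce to monomials.} I will work with monomials $z^\alpha\bar z^\beta$, $\alpha,\beta\in\mathbb N^n$, which span all complex polynomials in $\mathbf y$. Each is an eigenfunction of $\mathcal A$, with $\mathcal A(z^\alpha\bar z^\beta)=\mathrm{i}(|\alpha|-|\beta|)z^\alpha\bar z^\beta$ up to sign convention. This follows from $\mathcal A z_i=\mathrm{i} z_i$ (or $-\mathrm{i}z_i$) and the Leibniz rule, since $\mathcal A$ is a derivation. Because $\mathcal A$ preserves the polynomial degree, a polynomial $u$ lies in the kernel if and only if only monomials with $|\alpha|=|\beta|$ appear in it.

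\emph{Step 2: interplay with $\mathcal H_k$.} By Lemma \ref{Wiener Chaos preservation}, $\mathcal A$ commutes with the orthogonal projection onto $\mathcal H_k$. This can also be seen from the Hermite ladder representation, since the operators $a^\#_{j}a_i$ preserve each chaos and commute with the number operator. Therefore $A^k$ is the projection onto $\mathcal H_k$ of $\ker\mathcal A\cap\mathcal P^k$. Alternatively, $A^k$ is spanned by the real and imaginary parts of the Wick-ordered monomials $:\!z^\alpha\bar z^\beta\!:$ with $|\alpha|=|\beta|$ and $|\alpha|+|\beta|=k$; these are obtained by replacing $\mathbf y_j$ with $a_j^\#$ acting on $1$. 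Two consequences follow.
\begin{itemize}
\item[(i)] If $k$ is odd, then $|\alpha|=|\beta|$ is impossible, so $A^k=\{0\}$.
\item[(ii)] If $k$ is even, every element of $A^k$ is a polynomial in the products $z_i\bar z_j$, of degree at most $k$. A monomial with $|\alpha|=|\beta|$ factors as a product of the pairs $z_i\bar z_j$ after matching each $z$ factor with a $\bar z$ factor. The lower-order Wick corrections preserve this property, because projecting onto lower chaoses commutes with $\mathcal A$ and hence keeps invariant polynomials invariant. Each invariant polynomial of lower degree again factors in the same way, by induction on the degree.
\end{itemize}
Taking real parts, we can write $z_i\bar z_j=\mathcal Re(z_i\bar z_j)+\mathrm{i}\,\mathcal Im(z_i\bar z_j)$, which gives the claimed form $\Phi(\mathcal Re(z_i\bar z_j),\mathcal Im(z_i\bar z_j))$ with $\Phi\in\mathcal P^k$. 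Since each $z_i\bar z_j$ is quadratic in $\mathbf x$, and the Wick corrections lower the degree by $2$, only monomials of even degree occur.

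\emph{Main obstacle.} The delicate point is justifying that the kernel decomposes monomial by monomial in the real setting. Here $\mathcal A$ is a real operator with eigenvalues $\mathrm{i}m$, so I will complexify: $u$ is real and in the kernel if and only if its complex expansion has support only on $|\alpha|=|\beta|$. I also need to handle the Wick-ordering bookkeeping, i.e.\ to check that the kernel within $\mathcal H_k$ is not bigger than the projection of invariant polynomials. This follows because the projection onto $\mathcal H_k$ restricted to degree-$k$ homogeneous polynomials is a bijection that commutes with $\mathcal A$. Together these show that $A^k$ equals the projection of the degree-$k$ homogeneous invariants, which completes the characterization.
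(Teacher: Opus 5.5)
Your proof is correct, and it rests on the same mechanism as the paper's. In the variables $z_i$, the operator $\mathcal{A}$ acts diagonally on the monomials $z^\alpha\bar z^\beta$. In fact, since $a_i\sqrt{s_is_{d+1-i}}=1$ exactly, $\mathcal{A}z_i=-\mathrm{i}z_i$ with no extra scalars. Hence the polynomial kernel is spanned by the monomials with $|\alpha|=|\beta|$.

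The routes differ in how this is established. The paper uses the method of characteristics: it solves $\dot{\mathbf z}=-\mathrm{i}\mathbf z$ and tests invariance of $P(z_t,\bar z_t)$ along the flow. You use the infinitesimal version, reading off the eigenvalues of a derivation on the basis $z^\alpha\bar z^\beta$. This is slightly cleaner, since it bypasses the equivalence between solutions of the PDE and conserved quantities.

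Your Step 2 is not needed for the statement as written. The inclusion $A^k\subset\ker\mathcal{A}\cap\mathcal{P}^k$ is trivial, so Step 1 alone already gives:
\begin{itemize}
\item the evenness of all monomials, and hence $A^k=\{0\}$ for odd $k$ by the parity of elements of $\mathcal{H}_k$;
\item the representation $\Phi(\mathcal{R}e(z_i\bar z_j),\mathcal{I}m(z_i\bar z_j))$.
\end{itemize}

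What Step 2 buys is a sharper result that the paper does not state: an explicit spanning set of $A^k$ itself. This set consists of the real and imaginary parts of the Wick-ordered invariants $:\!z^\alpha\bar z^\beta\!:$ with $|\alpha|=|\beta|=k/2$. It follows from the fact that the orthogonal projection onto $\mathcal{H}_k$, restricted to homogeneous polynomials of degree $k$, is a bijection onto $\mathcal{H}_k$ that commutes with $\mathcal{A}$.
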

\begin{proof}
    We want to find all solutions of \(\nabla u(\textbf x)^TJS \textbf x = 0 \). Written algebraically, we have
    \[
    \sum_{i=1}^d \left(JS \textbf x\right)_i \partial_{x_i}u( \textbf x) = 0.
    \]
    The characteristic of this PDE is the set
    \[
    \left \{ \begin{pmatrix}
        X_1(t)&\dots & X_d(t), c
    \end{pmatrix} | u(\mathbf{X} (t)) =c, \dot{\mathbf{X}}(t) = JS \mathbf{X} (t) \right\}.
    \]
    In our case, \((JS \textbf x)_i = (-1)^{\lfloor i/(n+1) \rfloor} a_{i} s_{d+1-i}x_{d+1-i}\), hence the ODE is in fact \(n\) system of coupled two variables ODE 
    \[
    \begin{cases}
        \dot X_i = a_i s_{d+1-i}X_{d+1-i}, \\
        \dot X_{d+1-i}  = -a_{i} s_{i}X_i.
    \end{cases}
    \]
    By introducing the variable \(z_i\), the system becomes
    \[
    \dot z_i = - \mathrm{i}a_i\sqrt{s_i s_{d+1-i}} \:z_i.
    \]
    With our definition of \(J\) \(a_i\sqrt{s_i s_{d+1-i}}=1 \), so we can write the whole system in \(\mathbb C^n\) as
    \[
    \dot {\mathbf z} = -i \mathbf z.
    \]
    The polynomials functions that are conserved quantities of this system are polynomials in variables \(z_i \bar z_j\). Indeed, let us denote \(z_j = r_j e^{i \theta_j}\). If \(z^{\alpha} \bar z^{\beta}\) is a monomial and a conserved quantity, we must have
    \[
    \mathbf z^{\alpha} \bar {\mathbf z}^{\beta}  = \prod_{j=1}^n r_j^{\alpha_j + \beta_j} e^{i \theta_j (\alpha_j - \beta_j)} = \prod_{j=1}^n r_j^{\alpha_j + \beta_j} e^{i \theta_j (\alpha_j - \beta_j) + it\sum_{j=1}^n\alpha_j - \beta_j}  = \mathbf z^{\alpha}_t \bar {\mathbf z}^{\beta}_t.
    \]
    For that to be true, we must have \(|\alpha| = |\beta|\). Thus, we can decompose monomials that are conserved quantities as product of \(z_j \bar z_k\). For this polynomial to be a conserved quantity, it must satisfy \(P(z_t,\bar z_t)=P(z,\bar z) \) for all \(t\). Substituting the evolution of the system, we get
    \[
    \sum_{\alpha,\beta} c_{\alpha,\beta} z^{\alpha} \bar z^\beta e^{it(|\alpha|-|\beta|)} = \sum_{\alpha,\beta} c_{\alpha,\beta} z^{\alpha} \bar z^\beta.
    \]
    By independence of the monomials, it is possible if only if \(|\alpha| = |\beta|\) for all \(\alpha,\beta\) contributing in the sum so we can write all contributing monomials as product of \(z_j \bar z_k\).
    
    The argument of the method of characteristics is to use the fact that \(u\) is a solution of the PDE if and only if it is a conserved quantities of the ODE. Indeed, if \(\Phi(\mathbf X_t) =\Phi(\mathbf X_0)\) is a conserved quantity of the ODE, it solves the PDE since
    \[
    0 = \dot \Phi(\mathbf X) = \nabla \Phi(\mathbf X)^T\dot X = \nabla\Phi(\mathbf X)^TJS \mathbf X.
    \]
     In particular, when \(\Phi\) is a polynomial of degree \(k\), it belongs to \(A^k\). Moreover, if \(u\) is a solution of \(\nabla u(\mathbf X)^TJS \mathbf X = 0 \), then \(u(\mathbf X(t))\) is a conserved quantity since
     \[
     \frac{d}{dt} u(\mathbf X(t)) = \nabla u(\mathbf X(t))^T JS \mathbf X(t) = 0.
     \]
     Hence, \(u(\mathbf X(t)) = u(\mathbf X_0)\). We can write for all \(u\in A^{2k}\)
    \[
    u(\mathbf x) = u \left( \mathcal{R}e(z_i \bar z_j), \mathcal{I}m(z_i \bar z_j), i,j = 1,\dots,n \right).
    \]
    We note that the conserved quantities are polynomials of degree \(2\). Therefore, \(u\) must be a polynomial of even order.

\end{proof}
\begin{remark} \label{R_theta for A2k}
    Let us consider the change of variables \(\mathbf y = S^{1/2}\mathbf x\). We can see from the proof that for all \(u(\mathbf y) \in A^{2k}\), we have
    \[
    R^{(\theta)} \, u(\mathbf y) = u(\mathbf y),
    \]
    Where \(R^{(\theta)}\) is an operator defined as
    \[
    R^{(\theta)} \, u(\mathbf y)\coloneqq u(M_\theta \,\mathbf y),\quad M_\theta = \cos(\theta) I_d + \sin(\theta) J_d,
    \]
    where \(J_d\) is the symplectic skew-symmetric matrix. Moreover, we can note that the adjoint of \(R^{(\theta)}\) in \(L^2(\pi(dy))\) is simply \(R^{(-\theta)}\). This follows directly from a change of variable and the fact that \(M_\theta\) is orthogonal.
\end{remark}

    \subsection{Spectral gap on \texorpdfstring{$A^{k}$}{Ak}} \label{Spectral gap on A2k}
    Let \(k>1\). Our goal is to find a lower bound for
    \[
    \inf_{u\in A^{k}} R(u),
    \]
    where
    \[
     A^k = \ker(\mathcal{A}_{|\mathcal{H}^{k}}),\: \mathcal{A}_g= -x^TSJ \nabla, \: R(u) = \frac{\langle \nabla u,\nabla u\rangle}{\langle u,u\rangle}.
    \]
    Recall that from the previous section, \(A^{2k+1} = \{0\}\) for all \(k\ge0\). Therefore, we focus our analysis on even order.

    \begin{proposition}\label{Prop sp gap A^2k}
        For all \(k\ge1\), we have
        \[
        \inf_{u  \in A^{2k} \backslash \{0\}} R(u) \geq  k \gamma_{\min},
        \]
        with equality when \(k = 1\).
    \end{proposition}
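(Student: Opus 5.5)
The plan is to work in the variables $\mathbf y = S^{1/2}\mathbf x$ and use the ladder operators $a_i = \partial_{y_i}$, $a_i^\# = y_i - \partial_{y_i}$ from the proof of Lemma \ref{Wiener Chaos preservation}. In these variables $-\mathcal{L} = \nabla^\#\nabla = \sum_{i=1}^d s_i N_i$, where $N_i = a_i^\# a_i$ is the number operator. It satisfies $N_i H_\alpha = \alpha_i H_\alpha$, so on $\mathcal{H}_{2k}$ we have $\sum_i N_i = 2k$. The idea is to split $-\mathcal{L}$ into a part that only sees the pair numbers $N_i + N_{d+1-i}$ and a part that is invisible on the kernel $A^{2k}$. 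Pairing $i$ with $d+1-i$ for $i=1,\dots,n$, I write
\[
-\mathcal{L} = \sum_{i=1}^n \frac{s_i+s_{d+1-i}}{2}\,(N_i+N_{d+1-i}) + \sum_{i=1}^n \frac{s_i-s_{d+1-i}}{2}\,(N_i-N_{d+1-i}).
\]

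The key step is to show that $\inner{u}{(N_i - N_{d+1-i})u} = 0$ for every $u\in A^{2k}$ and every $i$. I would use Remark \ref{R_theta for A2k} with $\theta = \pi/2$. In that case $M_{\pi/2} = J_d$ maps $y_i \mapsto \pm y_{d+1-i}$ and $y_{d+1-i}\mapsto \mp y_i$ simultaneously in every plane. Hence $R^{(\pi/2)}$ is a unitary operator on $L^2(\pi(d\mathbf y))$ that conjugates $N_i$ into $N_{d+1-i}$; the sign flips are harmless because $N_j$ is invariant under $y_j\mapsto -y_j$. Thus $R^{(\pi/2)}(N_i-N_{d+1-i})R^{(-\pi/2)} = -(N_i - N_{d+1-i})$. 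Since $R^{(\pi/2)}u = u$ for $u\in A^{2k}$ and its adjoint is $R^{(-\pi/2)}$, I get
\[
\inner{u}{(N_i-N_{d+1-i})u} = \inner{R^{(\pi/2)}u}{R^{(\pi/2)}(N_i-N_{d+1-i})R^{(-\pi/2)}R^{(\pi/2)}u} = -\inner{u}{(N_i-N_{d+1-i})u},
\]
so it vanishes. An alternative check is that, in complex ladder variables, $N_i-N_{d+1-i}$ shifts the angular momentum of the $i$-th plane by $\pm 2$, while the kernel consists of states of total angular momentum zero.

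Granting this, for $u\in A^{2k}$ the first sum gives
\[
\inner{u}{-\mathcal{L}u} = \sum_{i=1}^n \frac{s_i+s_{d+1-i}}{2}\,\inner{u}{(N_i+N_{d+1-i})u} \ge \frac{\gamma_{\min}}{2}\inner{u}{\textstyle\sum_{j} N_j\, u} = \frac{\gamma_{\min}}{2}\, 2k\,\inner{u}{u}.
\]
Here I used that each $N_i+N_{d+1-i}$ is nonnegative, and the right-hand side is $k\gamma_{\min}\inner{u}{u}$. Since $\mathcal{A}u = 0$, we have $R(u) = \inner{u}{-\mathcal{L}u}/\inner{u}{u}$, which gives the bound.

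For equality at $k=1$, let $i^*$ attain $\gamma_{\min}$ and take $u = H_{2e_{i^*}} + H_{2e_{d+1-i^*}}$. Up to constants this is $|z_{i^*}|^2 - \pi(|z_{i^*}|^2)$, so it lies in $A^2$ by Proposition \ref{Characterization of $A^k$}. Its two orthonormal components have $-\mathcal{L}$-eigenvalues $2s_{i^*}$ and $2s_{d+1-i^*}$ with equal weights, so $R(u) = s_{i^*}+s_{d+1-i^*} = \gamma_{\min}$.

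The main obstacle is the vanishing of the cross term: one must check carefully that $R^{(\pi/2)}$ really swaps $N_i$ and $N_{d+1-i}$ as operators on $L^2(\pi)$. This holds because $\pi$ is standard Gaussian in $\mathbf y$ and $M_{\pi/2}$ is orthogonal and maps coordinate axes to coordinate axes, so $\nabla^\#$ and multiplication by $y_j$ transform covariantly.
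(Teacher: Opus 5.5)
Your proof is correct and is essentially the paper's argument. Both proofs pass to $\mathbf y=S^{1/2}\mathbf x$ and use the rotation invariance of $A^{2k}$ from Remark~\ref{R_theta for A2k} to get $\langle u,N_iu\rangle=\langle u,N_{d+1-i}u\rangle$, which lets $s_i$ be replaced by $(s_i+s_{d+1-i})/2$; they then use $\sum_j N_j=2k$ on $\mathcal H_{2k}$ and take $|z_{i^*}|^2-2$ for equality. The only difference is that you use the single quarter-turn $\theta=\pi/2$, which swaps $N_i$ and $N_{d+1-i}$ exactly, whereas the paper averages $R^{(-\theta)}E_iR^{(\theta)}$ over $\theta\in[0,2\pi)$ and computes $\bar E_i=(E_i+E_{d+1-i})/2$ in Lemma~\ref{Computation Ei bar}; your version is slightly leaner because it skips the Fubini step and the trigonometric averaging.
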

    \begin{proof}
    We consider the change of variable \(S^{1/2} \mathbf x = \mathbf y\). The measure \(\pi(d\mathbf x) \sim \mathcal{N}(0, S^{-1})\) becomes \(\pi(d\mathbf y) \sim\mathcal{N}(0, I)\). Moreover, the part of the generator acting on \(x_i\), which we denote \(\mathcal{L}_i^{(\mathbf x)}\), becomes
    \[
    \mathcal{L}_i^{(\mathbf x)} = -s_i x_i \partial_{x_i} + \partial_{x_i}^2  = s_i( -y_i \partial_{y_i} + \partial_{y_i}^2) = -s_i E_i.
    \]
    We have
    \begin{equation}
            \langle \nabla_\mathbf x u, \nabla_ \mathbf x u \rangle = -\sum_{i=1}^d \langle u, \mathcal{L}_i^{(\mathbf x)} u \rangle = \sum_{i=1}^d s_i\langle u, E_i u \rangle. \label{change of var in sp gap Ak}
    \end{equation}

    Since \( u \in A^{2k}\) and using the remark \ref{R_theta for A2k} we see that, for all \(i=1,\dots,d\) and for all \(\theta \in [0,2\pi)\), we have
    \[
    \langle u, E_i u \rangle = \langle R^{(\theta)} u, E_i R^{(\theta)} u \rangle = \langle  u, R_{-\theta} E_i R^{(\theta)} u \rangle.
    \]
    Using Fubini-Tonelli, we have
    \[
    \langle u, E_i u \rangle  = \frac{1}{2\pi} \int_0^{2\pi} \langle  u, R_{-\theta} E_i R^{(\theta)} u \rangle \, d\theta = \langle u, \bar E_i u \rangle,
    \]
    where 
    \[
    \bar E_i = \frac{1}{2\pi} \int_0^{2\pi}  R_{-\theta} E_i R^{(\theta)}  \, d\theta.
    \]
    We can compute explicitly the averaged operator (see Lemma \ref{Computation Ei bar}) 
    \[
    \bar E_i = \frac{E_i + E_{d+1-i}}{2}.
    \]
    Therefore, 
    \[
     \langle \nabla_x u, \nabla_x u \rangle = \sum_{i=1}^d \frac{s_i}{2}\langle u, (E_i+ E_{d+1-i}) u \rangle = \sum_{i=1}^d \frac{s_i + s_{d+1-i}}{2}\langle u, E_i u \rangle.
    \]
    A natural lower bound is 
    \[
   \langle \nabla_x u, \nabla_x u \rangle \geq \min_{i=1,\dots,n}\frac{s_i + s_{d+1-i}}{2} \sum_{j=1}^d \langle u, E_j u \rangle = \min_{i=1,\dots,n}\frac{s_i + s_{d+1-i}}{2} \langle \nabla_y u, \nabla_y u \rangle.
    \]
    Finally, we obtain
    \[
   R(u) = \frac{\langle \nabla_x u, \nabla_x u \rangle}{\langle u, u \rangle} \geq \min_{i=1,\dots,n}\frac{s_i + s_{d+1-i}}{2} \inf_{\substack{u \in \mathcal{H}_{2k} \backslash \{0\} \\ \text{deg}(u) \ge 2} } \frac{\langle \nabla_y u, \nabla_y u \rangle}{\langle u, u \rangle} = \min_{i=1,\dots,n}\frac{s_i + s_{d+1-i}}{2} \: 2k.
    \]
    Where the last equality is a known fact about the Ornstein-Uhlenbeck generator with identity for its drift matrix. 
    
    Finally, consider the function \(u_{i^{*}i^{*}}(\textbf{x}) = |z_{i^*}|^2\), with \(i^*\) the index such that \(\gamma_{\min} = s_{i^*}+ s_{d+1-i^*}\). A straightforward calculation for the function \(f = u_{i^{*}i^{*}} - 2 \in A^{2} \backslash \{0\}\) gives
    \[
    R(f) =  \min_{i=1,\dots,n}{s_i + s_{d+1-i}}.
    \]
    \end{proof}
    \begin{remark} We note that the naive bound
    \[
     \langle \nabla_\mathbf x u, \nabla_\mathbf x u \rangle \geq \min_{i=1,\dots,n} s_i \langle \nabla_\mathbf y u, \nabla_\mathbf y u \rangle,
    \]
    applied to \eqref{change of var in sp gap Ak} does not give anything new since we recover the known bound of the Ornstein-Uhlenbeck generator corresponding to the case \(J=0\). We should therefore use the structure of \(A^{k}\) if we want to gain something.
    \end{remark}

 \begin{lemma} \label{Computation Ei bar}
        In the notation of the proof above, we have
    \[
    \bar E_i = \frac{E_i + E_{d+1-i}}{2}.
    \]
    \end{lemma}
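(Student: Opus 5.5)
We will compute the conjugated operator $R^{(-\theta)}E_iR^{(\theta)}$ explicitly and then average over $\theta$. Throughout we work in the variables $\mathbf y=S^{1/2}\mathbf x$, where $\pi(d\mathbf y)$ is standard Gaussian and
\[
E_i=y_i\partial_{y_i}-\partial_{y_i}^2=a_i^\# a_i
\]
is the number operator in the $i$-th coordinate. Fix $i$ and write $j=d+1-i$.

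\emph{Step 1: how $M_\theta$ acts.} The matrix $M_\theta=\cos(\theta)I_d+\sin(\theta)J_d$ acts as a planar rotation on each pair of coordinates $(y_i,y_j)$ and leaves the pairs uncoupled. It is orthogonal. For $v=R^{(\theta)}u$, i.e. $v(\mathbf y)=u(M_\theta\mathbf y)$, the chain rule gives
\[
\partial_{y_i}v(\mathbf y)=(m\cdot\nabla u)(M_\theta\mathbf y),
\]
where $m=M_\theta^T e_i=\cos(\theta)\,e_i\pm\sin(\theta)\,e_j$. The sign depends on the convention for $J_d$ and plays no role below. In the same way $y_i=(M_\theta^T\mathbf w)_i=m\cdot\mathbf w$ with $\mathbf w=M_\theta\mathbf y$.

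\emph{Step 2: the conjugated operator.} Applying $R^{(-\theta)}$, which is the inverse substitution, Step 1 yields
\[
R^{(-\theta)}E_iR^{(\theta)}=(m\cdot\mathbf w)(m\cdot\nabla)-(m\cdot\nabla)^2 .
\]
Equivalently, since $a_i$ and $a_i^\#$ are linear in $(y,\partial)$, this operator is $(m\cdot a^\#)(m\cdot a)$. Expanding with $m=(\cos\theta,\pm\sin\theta)$ on the $(i,j)$ components gives
\[
\cos^2(\theta)\,E_i+\sin^2(\theta)\,E_j\pm\sin(\theta)\cos(\theta)\big(w_i\partial_j+w_j\partial_i-2\partial_i\partial_j\big).
\]
Here we use that the rotation does not mix the pair $(i,j)$ with the other coordinates, so no further terms appear.

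\emph{Step 3: averaging.} Integrating over $\theta\in[0,2\pi)$ and using
\[
\frac1{2\pi}\int_0^{2\pi}\cos^2=\frac1{2\pi}\int_0^{2\pi}\sin^2=\frac12,\qquad \frac1{2\pi}\int_0^{2\pi}\sin\cos=0,
\]
we obtain $\bar E_i=\tfrac12(E_i+E_j)$, as claimed.

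The Fubini step used in the proof of Proposition~\ref{Prop sp gap A^2k} is harmless. All operators involved act on the finite-dimensional space $\mathcal H_{2k}$, because rotations preserve total Hermite degree. The coefficients are also trigonometric polynomials in $\theta$, so the integrals can be exchanged freely.

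The only real obstacle is bookkeeping in Step 2. One has to check that pulling back by $M_\theta$ conjugates $\partial$ and multiplication by $y$ through the same orthogonal matrix $M_\theta^T$. This is exactly where orthogonality of $M_\theta$ is used. If this check fails for a convention issue, the product $(m\cdot a^\#)(m\cdot a)$ would pick up mismatched vectors $m$ and $m'$, and the cross terms would no longer cancel in the same clean way.
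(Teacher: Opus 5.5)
Your proposal is correct and follows the same route as the paper. In both, you conjugate $E_i$ by the rotation acting on the pair $(y_i,y_{d+1-i})$, expand into $\cos^2\theta\,E_i+\sin^2\theta\,E_{d+1-i}$ plus a $\sin\theta\cos\theta$ cross term, and average over $\theta$; your factorization $(m\cdot a^\#)(m\cdot a)$ and your justification for exchanging the integral are slightly more careful than the paper's version.

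One small slip: $\partial_{y_i}[u(M_\theta\mathbf y)]=\sum_k (M_\theta)_{ki}(\partial_k u)(M_\theta\mathbf y)$ and $(M_\theta^T\mathbf w)_i=(M_\theta e_i)\cdot\mathbf w$, so the correct vector is $m=M_\theta e_i$, not $M_\theta^T e_i$. This only flips the sign of the $\sin\theta$ component, which, as you note, disappears after averaging.
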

    \begin{proof}
        We notice that the rotation induced from \(R_{\theta}\) exactly adds \(\theta\) to each angles between \(y_i\) and \(y_{d+1-i}\). For each pair \((y_i, y_{d+1-i})\) the following changes appears
        \[
        \begin{cases}
            y_i \mapsto y_i \cos(\theta) + y_{d+1-i} \sin(\theta), \\
            y_{d+1-i} \mapsto -y_i \sin(\theta) + y_{d+1-i} \cos(\theta).
        \end{cases}
        \]
        By linearity of the derivative
                \[
        \begin{cases}
            \partial_{y_i} \mapsto \partial_{y_i} \cos(\theta) + \partial_{y_{d+1-i}} \sin(\theta), \\
            \partial_{y_{d+1-i}} \mapsto -\partial_{y_i} \sin(\theta) + \partial_{y_{d+1-i}} \cos(\theta).
        \end{cases}
        \]
        Therefore,
        \[
        y_i \partial_{y_i} \mapsto \cos(\theta)^2 y_i \partial_{y_i} + y_i \cos(\theta)\sin(\theta) \partial_{y_{d+1-i}} + y_{d+1-i} \cos(\theta)\sin(\theta) \partial_{y_{i}} + \sin(\theta)^2 y_{d+1-i} \partial_{y_{d+1-i}}.
        \]
 Similarly
        \[
        \partial_{y_i}^2 \mapsto \cos(\theta)^2 \partial{y_i}^2 + 2 \cos(\theta)\sin(\theta) \partial_{y_i} \partial_{y_{d+1-i}} + \sin{\theta}^2 \partial_{y_{d+1-i}}^2. 
        \]
        We use standard formulas for integrals of trigonometric functions to obtain
        \[
        \bar E_i = \frac{1}{2} \left(y_i \partial_{y_i} + y_{d+1-i} \partial_{y_{d+1-i}} -\partial{y_i}^2 - \partial_{y_{d+1-i}}^2  \right)  = \frac{E_i + E_{d+1-i}}{2}.
        \]
    \end{proof}

We can extend the result to the whole kernel of \(\mathcal{A}\).

\begin{proposition} \label{sp gap on ker(A)}
We have
    \[
    \inf_{\substack{ g \in \ker(\mathcal{A}) \\ \langle g,g\rangle = 1, \:\langle g,1\rangle = 0 }} R(g) \geq \gamma_{\min}.
    \]
\end{proposition}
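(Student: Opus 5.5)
The plan is to reduce the statement to Proposition \ref{Prop sp gap A^2k} by decomposing a general element of $\ker(\mathcal{A})\cap L^2_0(\pi)$ along the Wiener chaoses. The key structural fact is Lemma \ref{Wiener Chaos preservation}: $\mathcal{A}$ maps each $\mathcal{H}_k$ into itself. Let $g\in\ker(\mathcal{A})$ with $\langle g,1\rangle=0$ and $\langle g,g\rangle=1$, and write $g=\sum_{k\ge1}g_k$ with $g_k\in\mathcal{H}_k$, the orthogonal projection onto $\mathcal{H}_k$. Since $\mathcal{A}$ commutes with these projections (each $\mathcal{H}_k$ is invariant, and so is its orthogonal complement, because $\mathcal{A}^\#=-\mathcal{A}$ is skew-adjoint in $L^2(\pi)$ by the divergence-free condition), we get $0=\mathcal{A}g=\sum_k\mathcal{A}g_k$ with $\mathcal{A}g_k\in\mathcal{H}_k$. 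Hence $\mathcal{A}g_k=0$ for every $k$, that is, $g_k\in A^k$.

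Some care is needed in making this rigorous, since $g$ need not lie in the domain of $\mathcal{A}$ in a naive sense. I would interpret $\ker(\mathcal{A})$ via the closed skew-adjoint extension, or work with $g$ in the form domain of $\nabla^\#\nabla$ (otherwise $R(g)=+\infty$ and there is nothing to prove). The commutation of $\mathcal{A}$ with the chaos projectors $\Pi_k$ then follows from $\langle \mathcal{A}g,h_k\rangle=-\langle g,\mathcal{A}h_k\rangle$ for polynomials $h_k\in\mathcal{H}_k$. This yields $\Pi_k\mathcal{A}g=\mathcal{A}\Pi_k g$ weakly, and this domain bookkeeping is the step I expect to be the main technical obstacle.

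Next, by Proposition \ref{Characterization of $A^k$}, $A^{2j+1}=\{0\}$, so only the even chaoses contribute: $g=\sum_{j\ge1}g_{2j}$ with $g_{2j}\in A^{2j}$. The quadratic form $\langle \nabla u,\nabla u\rangle=\sum_i s_i\langle u,E_iu\rangle$ is diagonal with respect to the chaos decomposition, because each $E_i$ is diagonal on the Hermite basis and therefore preserves each $\mathcal{H}_k$. It follows that
\[
R(g)=\frac{\sum_{j\ge1}\langle g_{2j},-\mathcal{L}g_{2j}\rangle}{\sum_{j\ge1}\langle g_{2j},g_{2j}\rangle}\ \ge\ \inf_{j\ge1}\ \inf_{u\in A^{2j}\setminus\{0\}}R(u),
\]
exactly as in Proposition \ref{reduction to Wiener Chaos}, since a ratio of sums of nonnegative terms is at least the minimum of the termwise ratios.

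Finally, Proposition \ref{Prop sp gap A^2k} bounds the termwise ratios by $j\gamma_{\min}\ge\gamma_{\min}$ for all $j\ge1$, which gives the claim. Here $R$ denotes the full quotient with $\delta$ arbitrary. On $\ker(\mathcal{A})$ the term $\delta^2\langle\mathcal{A}g,\mathcal{A}g\rangle$ vanishes, so $R_\delta(g)=R(g)$ equals the unperturbed Dirichlet quotient used in Proposition \ref{Prop sp gap A^2k}. The bound is also sharp, being attained by $|z_{i^*}|^2-2\in A^2$.
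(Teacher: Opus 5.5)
Your proposal is correct and follows essentially the same route as the paper: expand $g$ along the Wiener chaoses, use the invariance $\mathcal{A}(\mathcal{H}_k)\subset\mathcal{H}_k$ together with orthogonality to get $g_k\in A^k$, discard the odd chaoses since $A^{2j+1}=\{0\}$, reduce the Rayleigh quotient chaos by chaos, and conclude with Proposition~\ref{Prop sp gap A^2k}. Your extra step of handling the domain of $\mathcal{A}$ through skew-adjointness and commutation with the chaos projectors justifies the identity $\langle\mathcal{A}g,\mathcal{A}g\rangle=\sum_k\langle\mathcal{A}g_k,\mathcal{A}g_k\rangle$, which the paper's contradiction argument uses without comment.
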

    \begin{proof}
        We first note that
        \[
        \ker(\mathcal{A}) \cap L^2_0(\pi) = \bigoplus_{k\ge 1} A_{2k}.
        \]
        Indeed, the inclusion \(\supseteq\) is trivial and for \(\subseteq\), let us consider \(g = \sum_{k} g_k \in \ker(\mathcal{A}) \cap L^2_0(\pi)\), with \(g_k \in \mathcal{H}_k\). By contradiction, we suppose that there exists \(g_l \in \mathcal{H}_l \backslash A_l\). Then \(\mathcal{A}g_l \neq 0\). On the other hand, due to the fact that \(\mathcal{A}(\mathcal{H}_k) \subset \mathcal{H}_k\) by Lemma \ref{Wiener Chaos preservation} and that \((\mathcal{H}_k)_{k\ge 1}\) is an orthogonal family, we have
        \[
        0 = \inner{\mathcal{A}g}{\mathcal{A}g} = \sum_{k\ge1} \inner{\mathcal{A}g_k}{\mathcal{A}g_k} \geq \inner{\mathcal{A}g_l}{\mathcal{A}g_l} >0,
        \]
        which is a contradiction. Therefore, \(g_k \in A^{k}\) for all \(k\ge 1\). Finally, we have \( A^{2k+1} = \{ 0\}\). So we find that
        \[
        \ker(\mathcal{A}) \cap L^2_0(\pi) = \bigoplus_{k\ge 1} A_{2k}.
        \]
        Moreover, by Lemma \ref{Wiener Chaos preservation} and Proposition \ref{reduction to Wiener Chaos}, we have
        \[
        \inf_{\substack{ g \in \ker(\mathcal{A}) \\ \langle g,g\rangle = 1, \:\langle g,1\rangle = 0 }} R(g) \geq \inf_{k\ge 1} \inf_{\substack{ g \in A^{2k} \\ \langle g,g\rangle = 1}} R(g).
        \]
        Finally, Proposition \ref{Prop sp gap A^2k} allows to lower bound the right hand side by \(\gamma_{\min}\) so that
        \[
         \inf_{\substack{ g \in \ker(\mathcal{A}) \\ \langle g,g\rangle = 1, \:\langle g,1\rangle = 0 }} R(g) \ge \gamma_{\min}.
        \]
    \end{proof}

\subsection{Spectral gap on \texorpdfstring{$B^k$}{Bk} } \label{Spectral gap on Bk}
    We first note that since \(\inf A + B \geq \inf A + \inf B\), we have the lower bound
    \[
    \inf_{u\in B^k \backslash\{0\}} R(u) \geq
    \inf_{u\in B^k \backslash\{0\} } \frac{\langle \nabla u,\nabla u\rangle}{\langle u,u\rangle} + \inf_{u\in B^k \backslash\{0\} } \frac{\langle g^T\nabla u,g^T\nabla u\rangle}{\langle u,u\rangle} 
    \geq k\,s_d + \inf_{u\in B^k \backslash\{0\} } \frac{\langle g^T\nabla u,g^T\nabla u\rangle}{\langle u,u\rangle}.
    \]
    We note that this lower bound is loose because the operators \(\mathcal{L}\) and  \(\mathcal{A}\) do not commute. In particular, they do not share the same eigenbasis and their Rayleigh quotient cannot be minimized simultaneously by the same eigenfunction. However, the approach we choose is simpler and yields a result sufficiently strong for our purpose.
    
    Our strategy will be to estimate \(\inf_{u\in B^k \backslash\{0\} } \frac{\langle g^T\nabla f,g^T\nabla f\rangle}{\langle f,f\rangle}\) by computing the spectrum of \(\mathcal{A}\) on \(B^k\).
    
\begin{proposition}
    
 \label{Spectrum of T}
Let \(\mathbb{K}\) be a field (either \(\mathbb{C}\) or \(\mathbb{R}\)). Let \(B \in \mathbb K^{d\times d}\) and let 
\[
\mu = \mathcal N(0,S^{-1}),
\qquad 
S=\mathrm{diag}(s_1,\dots,s_d), \; s_i>0.
\]
Consider the operator
\[
T = \mathbf x^\top B \nabla
\]
defined on the domain of polynomials in \(L^2(\mu)\). 

Let \(b_1,\dots,b_d\) denote the eigenvalues of \(B\), counted with algebraic multiplicity. Suppose that \(B\) is diagonalisable in \(\mathbb{K}\). Then the spectrum of \(T\) in the space of polynomials \(\mathcal{P}[\mathbb{K}]\) is
\[
\sigma(T)
=
\left\{
\sum_{i=1}^d \alpha_i b_i
\;:\;
\mathbf{\alpha} \in \mathbb N^d
\right\} \subset \mathbb{K}.
\]
\end{proposition}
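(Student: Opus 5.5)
We will treat $T=\mathbf x^\top B\nabla$ as a purely algebraic operator on the polynomial space $\mathcal P[\mathbb K]$; the Gaussian measure $\mu$ plays no role in the computation of eigenvalues. The key observation is that $T$ preserves the finite-dimensional spaces of homogeneous polynomials of degree $n$. Indeed, $x_i\partial_j$ maps a monomial of degree $n$ to a monomial of degree $n$. Hence $\mathcal P[\mathbb K]=\bigoplus_{n\ge0}\mathcal P_n^{\mathrm{hom}}$ is a decomposition into $T$-invariant finite-dimensional subspaces, and $\sigma(T)=\bigcup_n\sigma(T|_{\mathcal P_n^{\mathrm{hom}}})$. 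On an algebraic direct sum of invariant finite-dimensional blocks, $T-z$ is invertible if and only if it is invertible on each block.

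Next we diagonalise $B$. Let $B=P\,\mathrm{diag}(b_1,\dots,b_d)\,P^{-1}$ with $P\in\mathbb K^{d\times d}$ invertible, and write $B^\top=P^{-\top}\mathrm{diag}(b)P^{\top}$. We perform the linear change of variables $\mathbf y=P^{\top}\mathbf x$. The chain rule gives $\nabla_{\mathbf x}=P\nabla_{\mathbf y}$, so
\[
T=\mathbf x^\top BP\nabla_{\mathbf y}=\mathbf x^\top P\,\mathrm{diag}(b)\nabla_{\mathbf y}=\mathbf y^\top\mathrm{diag}(b)\nabla_{\mathbf y}=\sum_{i=1}^d b_i\,y_i\partial_{y_i}.
\]
The map $p(\mathbf x)\mapsto p(P^{-\top}\mathbf y)$ is an algebra automorphism of $\mathcal P[\mathbb K]$ that preserves the degree and each $\mathcal P_n^{\mathrm{hom}}$. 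The spectrum is therefore unchanged.

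In the new variables, each monomial $\mathbf y^\alpha$ is an eigenvector, since $T\mathbf y^\alpha=(\sum_i\alpha_ib_i)\mathbf y^\alpha$. The monomials with $|\alpha|=n$ form a basis of $\mathcal P_n^{\mathrm{hom}}$, so $T$ is diagonal on each block with eigenvalues $\{\alpha\cdot b:|\alpha|=n\}$. Taking the union over $n$ gives exactly $\{\sum_i\alpha_ib_i:\alpha\in\mathbb N^d\}$. Every such value is attained by an explicit polynomial eigenfunction, which gives the reverse inclusion. Since $b_i\in\mathbb K$ by the diagonalisability assumption, the set lies in $\mathbb K$.

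The main obstacle is to be precise about what "spectrum" means. It should be the set of $z$ for which $T-z$ fails to be bijective on $\mathcal P[\mathbb K]$, not the $L^2(\mu)$-closure spectrum. With that definition the argument reduces to the block-diagonal structure. We will check that, if $z$ is not of the form $\alpha\cdot b$, then $T-z$ is invertible on every block and hence on the whole space, because every polynomial lies in finitely many blocks. A secondary point is that the change of variables must be taken over $\mathbb K$. When $\mathbb K=\mathbb R$, this is guaranteed by the hypothesis that $B$ is diagonalisable in $\mathbb K$.
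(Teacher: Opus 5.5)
Your proof is correct and follows essentially the paper's route: your new coordinates $y_i=\mathbf v_i^\top\mathbf x$ (with $\mathbf v_i$ the columns of $P$) are exactly the paper's linear eigenfunctions $f_i$, and your monomials $\mathbf y^\alpha$ are its products $p_{\mathbf a}$. Packaging this as a change of variables makes the paper's inductive spanning lemma (Lemma~\ref{B_k spans polynomials}) immediate. Your homogeneous-block argument also supplies the invertibility of $T-z$ for $z$ outside the set, which the paper leaves implicit because it argues only about eigenfunctions.
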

\begin{proof}
    We first note that if \(\mathbf v_i\) is an eigenvector of \(B\) such that \(B\mathbf v_i = b_i \mathbf v_i\), then \(f_i(\mathbf x) = \mathbf v_i^T \mathbf x\) is an eigenvector of \(T\) with eigenvalue \(b_i\). Indeed, we have
    \[
    T f_i(\mathbf x) = T (\mathbf v_i^T \mathbf x) = \mathbf x^T B \mathbf v_i = b_i \mathbf x^T\mathbf v_i = b_i f_i(\mathbf x).
    \]
    We also note that \(T\) satisfy the product rule
    \[
    T(fg) = (Tf)g + f(Tg).
    \]
    In particular, if \(f,g\) are eigenfunction of \(T\) with eigenvalues \(\alpha,\beta\), we have that \(fg\) is an eigenfunction with eigenvalues \(\alpha + \beta\). Hence, any function of the form
    \[
    p_\mathbf a(\mathbf x) = \prod_{i=1}^d f_i(\mathbf x)^{a_i}, \tag{\(\star\)}
    \]
    is an eigenfunction of \(T\) with eigenvalue 
    \[
    \sum_{i=1}^d a_i b_i.
    \]
    Let us denote
    \[\mathcal{B}_k = \left\{ p_\mathbf a(\mathbf x)=\prod_{i=1}^d f_i(\mathbf x)^{a_i} , \text{ such that }  \mathbf a\in \mathbb{N}^d, |\mathbf a| \leq k  \right \}.
    \]
       
    Using Lemma \ref{B_k spans polynomials}, if a polynomial \(p(x)\) is not of the form (\(\star\)), we can write it as a linear combination of function of the form (\(\star\)). I.e.
    \[
    p(\mathbf x) = \sum_{|\mathbf \alpha|\leq k} c_\alpha p_\alpha(\mathbf x).
    \]
    Then, we can see that 
    \[
    Tp(\mathbf x) = \sum_{|\alpha|\leq k}  c_\alpha \left( \sum_{i=1}^d \alpha_i b_i \right) p_\alpha(\mathbf x).
    \]
    Therefore, if \(p(\mathbf x)\) is an eigenfunction of \(T\) with eigenvalue \(a\) and \(p(\mathbf x)\) is not of the form (\(\star\)), the following must hold for all \(\alpha \) indexing terms contributing in the linear combination (i.e \(c_\alpha \neq 0\))
    \[
    \left( \sum_{i=1}^d \alpha_i b_i \right) = a, 
    \]
    Therefore, the spectrum of \(T\) on polynomials is the following set
\[
\left\{
\sum_{i=1}^d \alpha_i b_i
\;:\;
\alpha \in \mathbb N^d
\right\}.
\]
    \end{proof}

\begin{lemma} \label{B_k spans polynomials}
        In the notation of Proposition \ref{Spectrum of T}, when \(B\) is diagonalisable, the set \(\mathcal{B}_k\) spans of polynomials of degree \(k\).
\end{lemma}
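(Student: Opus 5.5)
The idea is that, since $B$ is diagonalisable over $\mathbb{K}$, its eigenvectors $\mathbf v_1,\dots,\mathbf v_d$ form a basis of $\mathbb{K}^d$. Hence the linear forms $f_i(\mathbf x)=\mathbf v_i^T\mathbf x$ are linearly independent. Let $V$ be the matrix whose columns are $\mathbf v_1,\dots,\mathbf v_d$, so $V$ is invertible, and set $\mathbf y = V^T\mathbf x$, so that $y_i = f_i(\mathbf x)$. This is an invertible linear change of coordinates on $\mathbb{K}^d$, and I plan to reduce everything to it.

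The key steps are as follows.

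\textbf{Step 1 (coordinates).} Write each coordinate function as $x_j = \sum_i (V^{-T})_{ji}\, y_i$. Thus every $x_j$ is a linear combination of the $f_i$.

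\textbf{Step 2 (monomials).} Take any monomial $\mathbf x^{\beta}$ with $|\beta| = m \le k$. Expanding the product of $m$ such linear combinations, multilinearly, gives a linear combination of products $f_{i_1}\cdots f_{i_m} = p_{\mathbf a}$ with $|\mathbf a| = m$. Every $x$-monomial of degree at most $k$ therefore lies in $\operatorname{span}\mathcal{B}_k$.

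\textbf{Step 3 (conclusion).} Conversely, each $p_{\mathbf a}$ is clearly a polynomial of degree $|\mathbf a|\le k$. Hence $\operatorname{span}\mathcal{B}_k = \mathcal{P}^k[\mathbb{K}]$. If one wants the homogeneous statement, the same argument shows that the $p_{\mathbf a}$ with $|\mathbf a| = k$ span the homogeneous polynomials of degree $k$.

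As an optional sharpening, I would also note that $\mathcal{B}_k$ is in fact a basis. Polynomials in $\mathbf x$ of degree at most $k$ correspond bijectively to polynomials in $\mathbf y$ of degree at most $k$, because the substitution is invertible and linear, so it preserves degree. The $p_{\mathbf a}$ are exactly the $\mathbf y$-monomials, so they are linearly independent, and a dimension count, $\binom{d+k}{k}$ on both sides, closes the argument. Linear independence is not needed for the proof of Proposition \ref{Spectrum of T}, but it makes the coefficient comparison used there legitimate.

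The only point requiring care is over $\mathbb{K}=\mathbb{R}$. There the hypothesis that $B$ is diagonalisable over $\mathbb{R}$ must be used, so that $V$ is real and the $f_i$ are real polynomials. Otherwise one works in $\mathcal{P}[\mathbb{C}]$. No analytic obstacle is expected, since the statement is purely algebraic.
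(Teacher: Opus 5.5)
Your proof is correct and is essentially the paper's: the paper argues by induction on the degree, writing each degree-$(k+1)$ monomial as a coordinate $x_{i^*}$ (a linear combination of the $f_i$, since the eigenvectors form a basis) times a degree-$k$ monomial already in $\operatorname{span}\mathcal{B}_k$, which is your multilinear expansion after the substitution $\mathbf y = V^T\mathbf x$ carried out one degree at a time. Your extra observation that $\mathcal{B}_k$ is in fact a basis goes beyond what the paper proves and is worth keeping, since the coefficient comparison in the proof of Proposition~\ref{Spectrum of T} tacitly relies on the linear independence of the $p_{\mathbf a}$.
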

\begin{proof}
We proceed by induction.

    If \(k=1\), let \( f(\mathbf x) = \mathbf a^T\mathbf x\), with \(\mathbf a \in \mathbb{R}^d\). Since \(B\) is diagonalisable, there exists \(\mathbf v_i \in \mathbb{K}^d, \: i =1.\dots,d\) an eigenbasis of \(\mathbb{K}^d\). We have by linearity
    \[
    \mathbf a = \sum_{i=1}^d a_i \mathbf v_i \implies f(\mathbf x) = \sum_{i=1}^d  a_i \: f_i(\mathbf x).
    \]
    We suppose that there exist \(k \geq1\) such that the set \(\mathcal{B}_k\) spans of polynomials of degree \(k\). Let \(f \in \mathcal{P}^{k+1}\). We can always write it as \(f = f_k + h\), with \(f_k \in \mathcal{P}^{k} \) and where \(h\) is a homogeneous polynomial of degree \(k+1\). Let us denote \(I_k\) the set of indices required to decompose any \(f_k\) in terms of function in  \(\mathcal{B}_k\). By induction, we can write
    \[
    f_k(\mathbf x) = \sum_{i \in I_k} a_i \: p_{\alpha^{(i)}}(\mathbf x), \text{ with } p_{\alpha^{(i)}} \in \mathcal{B}_k.
    \]
    We note that \(h\) is of the form
    \[
    h(\mathbf x) = \sum_{ \substack{\alpha \in \mathbb{N}^d \\ |\alpha| = k+1} } c_\alpha \prod_{i=1}^d x_i^{\alpha_i} = \sum_{ \substack{\alpha \in \mathbb{N}^d \\ |\alpha| = k+1} } c_\alpha x_{i^{*}} \prod_{\substack{i=1\\ i \neq i^{*}}}^d x_i^{\alpha_i},
    \]
    where \( i^{*} = i^{*}(\alpha) = \min \{i : \alpha_i >0 \}\). By induction, we can write
    \[
    h(\mathbf x) = \sum_{ \substack{\alpha \in \mathbb{N}^d \\ |\alpha| = k+1} } \left( \sum_{i=1}^d \tilde c_i f_i(\mathbf x) \right) \left( \sum_{i \in I_k} \hat c_i \: p_{\alpha^{(i)}}(\mathbf x) \right), \text{ with } p_{\alpha^{(i)}} \in \mathcal{B}_k.
    \]
    By developing the product and combining the sums, we obtain
    \[
    f(\mathbf x) = \sum_{i \in I_{k+1}} a_i \: p_{\alpha^{(i)}}(\mathbf x), \text{ with } p_{\alpha^{(i)}}\in \mathcal{B}_{k+1}.
    \]
\end{proof}

\begin{lemma} \label{Spectrum of JS}
    The matrix \(JS\) is similar to the skew symmetric matrix \(S^{1/2}JS^{1/2}\). I.e.
    \[
    S^{1/2} (JS) S^{-1/2} = S^{1/2} J S^{1/2}
    \]
    In particular, its eigenvalues are purely imaginary and \(\sigma(JS) = \{ \pm \mathrm{i} \}\). Moreover, the matrix \(JS\) is diagonalizable over \(\mathbb{C}\).
\end{lemma}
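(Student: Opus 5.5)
The identity itself is immediate: since $S^{1/2}$ is invertible and commutes with $S$, we have $S^{1/2}(JS)S^{-1/2} = S^{1/2}J S^{1/2}\,S^{1/2}S^{-1/2} = S^{1/2}JS^{1/2}$. So $JS$ and $\tilde J := S^{1/2}JS^{1/2}$ are similar, and they share spectrum and diagonalizability over $\mathbb{C}$. Moreover, $\tilde J^T = S^{1/2}J^TS^{1/2} = -\tilde J$, so $\tilde J$ is real skew-symmetric. In particular $\tilde J$ is normal, hence unitarily diagonalizable over $\mathbb{C}$ with purely imaginary eigenvalues. Indeed, for $\tilde J v=\mu v$ with $\|v\|=1$, we get $\bar\mu = \langle v,\tilde J^T v\rangle$ (sesquilinear), and skew-symmetry gives $\mu=-\bar\mu$. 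Transferring back through the similarity gives diagonalizability of $JS$ and $\sigma(JS)\subset \mathrm{i}\mathbb{R}$.

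It remains to identify the eigenvalues exactly, using the structure \eqref{J pairwise matching} with $S=\mathrm{diag}(s_1,\dots,s_d)$. I will compute $\tilde J$ entrywise: $\tilde J_{i,d+1-i} = \sqrt{s_i}\,a_i\sqrt{s_{d+1-i}} = 1$ for $i\le n$, with $\tilde J_{d+1-i,i}=-1$ and all other entries zero. Thus $\tilde J$ is exactly the anti-diagonal symplectic matrix with $\pm1$ entries. Equivalently, it is a direct sum of $n$ blocks $\begin{pmatrix}0&1\\-1&0\end{pmatrix}$ acting on the coordinate pairs $(i,d+1-i)$.

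Each block has characteristic polynomial $\mu^2+1$, so $\sigma(\tilde J)=\{\pm\mathrm{i}\}$, each with multiplicity $n$. Alternatively, I can note that $\tilde J^2=-I$ (check $\tilde J\tilde J^T=I$), which forces $\mu^2=-1$. Both signs occur, as seen on any single block. The block eigenvectors $(1,\pm\mathrm{i})^T/\sqrt2$ also give an explicit eigenbasis, reconfirming diagonalizability; this matches the rotation $\dot{\mathbf z}=-\mathrm{i}\mathbf z$ found in the proof of Proposition \ref{Characterization of $A^k$}.

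There is no real obstacle here. The only point needing care is the normalization $a_i\sqrt{s_is_{d+1-i}}=1$, since that is what makes $\tilde J$ orthogonal and pins the spectrum to exactly $\{\pm\mathrm{i}\}$ rather than to general imaginary values $\pm\mathrm{i}\,a_i\sqrt{s_is_{d+1-i}}$.
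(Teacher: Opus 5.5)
Your proposal is correct and follows essentially the same route as the paper. You both compute $S^{1/2}JS^{1/2}$ entrywise and use the normalization $a_i\sqrt{s_is_{d+1-i}}=1$ to see that it is a direct sum of $n$ blocks $\begin{pmatrix}0&1\\-1&0\end{pmatrix}$ on the coordinate pairs $(i,d+1-i)$, which gives $\sigma(JS)=\{\pm\mathrm{i}\}$, and you both conclude diagonalizability over $\mathbb{C}$ from the normality of a real skew-symmetric matrix. Your extra checks ($\tilde J^2=-I$ and the explicit block eigenvectors) are consistent with this but not needed.
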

\begin{proof}
    We have \(S^{1/2} = \text{diag}(\sqrt{s_1}, \dots, \sqrt{s_d})\). Since \(s_i > 0\) for all \(i\), \(S^{1/2}\) is invertible. Define the matrix \(M\) as
\[
M = S^{1/2} (JS) S^{-1/2} = S^{1/2} J S^{1/2}
\]
Since \(JS\) and \(M\) are similar, they share the same characteristic polynomial and eigenvalues. The entries of \(M\) are given by
\[
M_{ij} = \sqrt{s_i} J_{ij} \sqrt{s_j}
\]
We consider the non-zero entries of \(J\)
\begin{itemize}
    \item[] For \(1 \leq i \leq n\) and \(j = d-i+1\)
    \[ M_{i, d-i+1} = \sqrt{s_i} \left( \frac{1}{\sqrt{s_i s_{d-i+1}}} \right) \sqrt{s_{d-i+1}} = 1. \]
    
    \item[] For \(n+1 \leq i \leq d\) and \(j = d-i+1\)
    \[ M_{i, j} = \sqrt{s_{i}} \left( -a_j \right) \sqrt{s_j} = \sqrt{s_{d-j+1}} \left( -\frac{1}{\sqrt{s_j s_{d-j+1}}} \right) \sqrt{s_j} = -1. \]
\end{itemize}

The matrix \(M\) is block diagonal matrix of the form
\[
M = \begin{pmatrix} 0 & 1 \\ -1 & 0 \end{pmatrix}.
\]
Specifically, for each pair of indices \(\{i, d-i+1\}\), the subspace \(V_i = \text{span}\{e_i, e_{d-i+1}\}\) is invariant under \(M\). The restriction \(M|_{V_i}\) in the basis \((e_i, e_{d-i+1})\) is exactly block \( B = \begin{pmatrix} 0 & 1 \\ -1 & 0 \end{pmatrix}\). The eigenvalues for each block \(B\) are \(\pm i\). Since there are \(n\) such blocks, the spectrum of \(M\) is 
\[
\sigma(JS) = \{ \pm i\},
\]
where each eigenvalues has multiplicity \(n\). Finally, we note that the matrix \(M = S^{1/2} J S^{1/2}\) is real and skew-symmetric (\(M^T = -M\)). We note that every real skew-symmetric matrix is a normal matrix (\(M^T M = M M^T = -M^2\)). Since by the spectral theorem, any normal matrix is unitarily diagonalizable over \(\mathbb{C}\), \(JS\) is diagonalizable over \(\mathbb{C}\).

\end{proof}

\begin{proposition}
    For all \(k\geq1\), we have
    \[
     \inf_{u\in B^k \backslash\{0\}} R(u) \geq 1 + k\, s_d.
    \]
\end{proposition}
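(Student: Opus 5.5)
The plan is to follow the splitting announced at the start of Section \ref{Spectral gap on Bk}. For $u\in B^k\setminus\{0\}$ (with $\delta=1$) we write
\[
R(u)=\frac{\langle \nabla u,\nabla u\rangle}{\langle u,u\rangle}+\frac{\langle \mathcal{A}u,\mathcal{A}u\rangle}{\langle u,u\rangle},
\]
and bound each quotient separately. Since $B^k\subset\mathcal{H}_k$, the first quotient is at least the smallest eigenvalue of $\nabla^\#\nabla$ on $\mathcal{H}_k$. The Hermite polynomials $H_\alpha$ from \eqref{definition d-dimensional hermite} are eigenfunctions of $\nabla^\#\nabla$ with eigenvalue $\sum_i\alpha_i s_i$. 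For $|\alpha|=k$ this is $\ge k\,s_d$, which gives the term $k\,s_d$. The whole task is therefore to show
\[
\langle \mathcal{A}u,\mathcal{A}u\rangle\ge \langle u,u\rangle \quad\text{for all } u\in B^k.
\]

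For the second quotient I would argue spectrally on the finite-dimensional space $\mathcal{H}_k$, complexified to $\mathcal{H}_k\otimes\mathbb{C}$.

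\emph{Step 1: $\mathcal{A}$ is skew-adjoint on $\mathcal{H}_k$.} Because $g=JS\mathbf x$ is divergence-free with respect to $\pi$, we have $\mathcal{A}^\#=-\mathcal{A}$ in $L^2(\pi)$. By Lemma \ref{Wiener Chaos preservation}, $\mathcal{A}$ maps $\mathcal{H}_k$ into itself. Hence $\mathcal{A}|_{\mathcal{H}_k}$ is a skew-adjoint, in particular normal, operator on a finite-dimensional space. It is therefore unitarily diagonalizable with purely imaginary eigenvalues. Moreover $\mathcal{A}^\#\mathcal{A}=-\mathcal{A}^2$ acts on each eigenspace by $|\mu|^2$.

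\emph{Step 2: $B^k$ is the sum of the nonzero eigenspaces.} The kernel $A^k$ is exactly the $\mu=0$ eigenspace. Its orthogonal complement $B^k$ in $\mathcal{H}_k$ is then the orthogonal sum of the eigenspaces with $\mu\neq0$; this identification uses normality, and it survives complexification because $\mathcal{A}$ is real. Consequently
\[
\langle \mathcal{A}u,\mathcal{A}u\rangle\ge \min\{|\mu|^2:\mu\in\sigma(\mathcal{A}|_{\mathcal{H}_k}),\ \mu\ne0\}\,\langle u,u\rangle \quad\text{for } u\in B^k.
\]

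\emph{Step 3: the nonzero eigenvalues satisfy $|\mu|\ge1$.} Write $\mathcal{A}=(JS\mathbf x)\cdot\nabla=\mathbf x^\top (JS)^\top\nabla$, which is the operator $T$ of Proposition \ref{Spectrum of T} with $B=(JS)^\top$. By Lemma \ref{Spectrum of JS}, $JS$ is diagonalizable over $\mathbb{C}$ with spectrum $\{\pm\mathrm{i}\}$, and $(JS)^\top$ has the same properties. Proposition \ref{Spectrum of T} with $\mathbb{K}=\mathbb{C}$ then shows that every eigenvalue of $\mathcal{A}$ on complex polynomials has the form $\mathrm{i}(m_+-m_-)$ with $m_\pm\in\mathbb{N}$, i.e. lies in $\mathrm{i}\mathbb{Z}$. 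An eigenvector of $\mathcal{A}|_{\mathcal{H}_k}$ is a polynomial eigenvector, so $\sigma(\mathcal{A}|_{\mathcal{H}_k})\subset\mathrm{i}\mathbb{Z}$. Every nonzero eigenvalue thus has $|\mu|\ge1$, which is the missing bound.

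Combining the two quotients gives $R(u)\ge k\,s_d+1$ on $B^k\setminus\{0\}$. The step I expect to need the most care is Step 2. The spectrum statement in Proposition \ref{Spectrum of T} is on the full polynomial space, and one must check that restricting to $\mathcal{H}_k$ and passing from the real space $B^k$ to its complexification loses nothing. The key point is that orthogonality of eigenspaces comes from skew-adjointness in $L^2(\pi)$, not from Proposition \ref{Spectrum of T} itself. For real $u$, $\langle \mathcal{A}u,\mathcal{A}u\rangle$ is unchanged when computed in the complexified space, so it suffices to argue there.
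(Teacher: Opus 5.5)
Your proof is correct and follows essentially the same route as the paper: you split $R(u)$ into the Ornstein--Uhlenbeck quotient, bounded below by $k\,s_d$ on $\mathcal{H}_k$, and the quotient $\langle \mathcal{A}u,\mathcal{A}u\rangle/\langle u,u\rangle$; Proposition \ref{Spectrum of T} and Lemma \ref{Spectrum of JS} put the spectrum of $\mathcal{A}$ in $\mathrm{i}\mathbb{Z}$, and skew-adjointness then gives $-\mathcal{A}^2\ge 1$ on the orthogonal complement of the kernel. Your explicit handling of normality, of the restriction to $\mathcal{H}_k$, and of complexification fills in steps the paper only sketches, and your description of the eigenvalues as $\mathrm{i}(m_+-m_-)$ is more accurate than the paper's set $\{\mathrm{i}(k-2r)\}$ on $\mathcal{P}^k$.
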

\begin{proof}
    
    From Proposition \ref{Spectrum of T} and Lemma \ref{Spectrum of JS}, we can see that the spectrum of the operator \(T = -\mathcal{A} = x^T SJ\nabla\) is an integer linear combination of \(\pm i\). Hence, on \(\mathcal{P}^{k}\)
    \[
        \sigma(T_{|\mathcal{P}^{k}}) = \left\{ i(k-2r) | \:k\geq 1, \: r=0,\dots,k \right\}.
    \]
    Since \(T \) is skew-symmetric in \(L^2(\mu)\), we can deduce the spectrum of \(T^{\#}T = -T^2\) from the spectrum of \(T\). Namely, on \(\mathcal{P}^{k}\), we have
    \[
    \sigma(T^{\#}T_{|\mathcal{P}^{k}}) = \left\{ (k-2r)^2| \:k\geq 1,\:  r=0,\dots,k \right\}.
    \]
    We cannot have a contribution to the eigenvector of \(T\) associated with the eigenvalue \(0\) on \(B^{k} \). Hence, since \(T^{\#}T\) is self adjoint and has a discrete spectrum, we can write any \( u\in B^{k}\) in an eigenbasis
    \[
    u = \sum u_i v_i.
    \]
    Therefore, the Rayleigh quotient becomes
    \[
    \frac{\langle Tu,Tu\rangle}{\langle u,u\rangle} = \frac{\sum \lambda_i u_i^2 }{\sum u_i^2} \geq \inf \{  \lambda \in \sigma(T^{\#}T _{|B^k}) \} .
    \]
    Finally, since all non zero eigenvalues of \(T\) on \(B^{k} \) are bigger than 1, we have
    \[
     \inf_{u\in B^{k}_0\backslash\{0\} }\frac{\langle Tu,Tu\rangle}{\langle u,u\rangle} \geq  1.
    \]
    In conclusion, we have for all \(k\geq1\)
    \[
     \inf_{u\in B^{k}_0 \backslash\{0\} } R(u) \geq 1 + k s_d.
    \]
    \end{proof}

\subsection{Spectral gap on \texorpdfstring{$\mathcal{P}^k_0$}{Pk}} \label{Spectral gap on Pk}
    We now have all the tools needed to prove the main results.
    
\begin{proof}[Proof of Theorem \ref{Main result d=2n asymptotic}]

To show the lower bound under the limit, we consider an increasing sequence \(\left( \delta_n\right)_{n\geq 1}\) such that \(\delta_n \to \infty\) and the eigenfunction \(f_{n}\) minimizing the Rayleigh quotient for a given \(\delta_n\). We have from Section \ref{Spectral gap on A2k} that for all \( \delta_n >0\),
    \[
    \inf_{\substack{ f \in L^2_0(\pi) \\ \langle f,f\rangle = 1 }} R_{ \delta_n}(f) \leq \gamma_{\min}.
    \]
    Moreover, \(\mathcal{R} \succeq \mathcal{L}\), hence for \(k \geq \gamma_{\min} s_d^{-1}\), we have
        \[
    \inf_{\substack{ f \in \mathcal{H}_{k} \\ \langle f,f\rangle = 1 }} R_{ \delta_n}(f) \geq k s_d \geq \gamma_{\min}.
    \]
    Therefore, for all \( \delta_n >0\), \(f_{n}\) must lie in the finite dimensional space \(\bigoplus_{k=1}^{ \lfloor \gamma_{\min} s_d^{-1} \rfloor } \mathcal{H}_k\).
     
    We now decompose \(f_n\) in a direct sum to use the results from Section \ref{Spectral gap on A2k}. Since, \(L^2_0(\pi) = \ker(\mathcal{A}) \oplus \overline {\text{Image}(\mathcal{A})}\), we can decompose any observable \(f\in L^2_0(\pi)\) as
    \[
    f = g + h,\quad g \in \ker(\mathcal{A}) , \:h\in \overline {\text{Image}(\mathcal{A})}.
    \]
    Without loss of generality, we can suppose that \(\langle f,f\rangle = 1\). Since, \(g \perp h\), we must have \[\langle g,g\rangle + \langle h,h\rangle = 1. \]
     Using reversibility and the fact that \(\mathcal{A} g = 0\), the Rayleigh quotient writes as
    \begin{equation}
        R(f) = \langle g, \nabla^{\#}\nabla g\rangle  + 2\langle g, \nabla^{\#}\nabla h \rangle + \langle h, (\nabla^{\#}\nabla + \mathcal{A}^{\#}\mathcal{A})h \rangle. \label{eq Rayleigh quotient decomposition into two blocks}
    \end{equation}

    We start by estimating the terms in the variables \(\delta_n \), we have
    \[
    R_{ \delta_n}(f) \geq \langle g, \nabla^{\#}\nabla g\rangle  - 2|\langle g, \nabla^{\#}\nabla h \rangle| + \langle h, (\nabla^{\#}\nabla +  \delta_n^2\mathcal{A}^{\#}\mathcal{A})h \rangle,
    \]
    In particular, the terms \(\langle h, (\nabla^{\#}\nabla +\delta_n^2 \mathcal{A}^{\#}\mathcal{A})h \rangle\) depends quadratically on \( \delta_n \), whereas the other terms are constant in \( \delta_n \).
    We have
    \[
      \inf_{\substack{ f \in L^2_0(\pi) \\ \langle f,f\rangle = 1 }} R_{ \delta_n}(f)= R_{ \delta_n}(f_n)  \geq \langle g_n, \nabla^{\#}\nabla g_n\rangle  - 2|\langle g_n, \nabla^{\#}\nabla h_n \rangle| + \langle h_n, (\nabla^{\#}\nabla + \delta_n^2 \mathcal{A}^{\#}\mathcal{A})h_n \rangle.
    \]
    Since the left hand-side explodes when \(h_n\) does not vanish in the \(L^2(\pi)\)-limit and \( f_n  \) lies in a finite dimensional space for all \(n\), we must have \(f_n \to \lim_n g_n = g \in \ker(\mathcal{A})\). In particular, we have \(2|\langle g_n, \nabla^{\#}\nabla h_n \rangle| \to 0\). Hence,
    \[
    \lim_{\delta \to \infty} \inf_{\substack{ f \in L^2_0(\pi) \\ \langle f,f\rangle = 1 }} R_{\delta_n}(f) = \lim_{n\to \infty} R_{\delta_n}(f_n) \geq    \lim_{n\to \infty} R_{\delta_n}(g_n) \geq  \lim_{n\to \infty}\inf_{\substack{ g \in \ker(\mathcal{A}) \\ \langle g,g\rangle = 1, \:\langle g,1\rangle = 0 }} R_{\delta_n}(g) \geq \gamma_{\min},
    \]
    where in the last inequality, we use the Proposition \ref{sp gap on ker(A)}. Finally, by the Squeeze theorem, we have
        \[
    \lim_{\delta \to \infty} \lambda_\delta = \gamma_{\min}.
    \]   
\end{proof}

Note that we can prove a non-asymptotic lower bound for the Rayleigh quotient on \(k\)-th Wiener Chaos using the analysis done in Sections \ref{section reduction to Wiener Chaos}, \ref{Spectral gap on A2k} and \ref{Spectral gap on Bk} except for finitely many \(k\). We detail this fact in the following proposition.

\begin{proposition} \label{prop cv non asymptotic pairwise matching}
    Let \(S \in \mathcal{S}_d^{>0}(\mathbb{R}), J \in \mathcal{A}_d(\mathbb{R})\) be as in Section \ref{Pairwise matching in a general dimension}. For all \(\delta \ge 1\), we have
    \[
    \inf_{f \in L^2_0(\pi)}R_\delta(f) \geq \min\{1, \min_{k\in 2\mathbb{N}\cap[2,s_d^{-1}]} \inf_{f \in \mathcal{H}_k} R_\delta(f_k)  \}.
    \]
\end{proposition}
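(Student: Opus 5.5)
The plan is to combine the chaos decomposition of Proposition \ref{reduction to Wiener Chaos} with the two estimates obtained on $A^k$ and $B^k$. By Proposition \ref{reduction to Wiener Chaos} (which holds verbatim for $R_\delta$, since Lemma \ref{Wiener Chaos preservation} shows that $\mathcal{L}$ and $\mathcal{A}$ both preserve each $\mathcal{H}_k$), we have
\[
\inf_{f\in L^2_0(\pi)} R_\delta(f)\ge \inf_{k\ge1}\inf_{f_k\in\mathcal{H}_k} R_\delta(f_k).
\]
It therefore suffices to show that every chaos $\mathcal{H}_k$ with either $k$ odd or $k>s_d^{-1}$ satisfies $\inf_{\mathcal{H}_k}R_\delta\ge 1$. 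The remaining chaoses, namely the even $k\in[2,s_d^{-1}]$, appear explicitly in the minimum of the statement.

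\emph{Odd chaoses.} For $k$ odd, Proposition \ref{Characterization of $A^k$} gives $A^k=\{0\}$, so $\mathcal{H}_k=B^k$. On $B^k$ the spectrum of $\mathcal{A}^\#\mathcal{A}=-\mathcal{A}^2$ consists of the values $(k-2r)^2$ with $k-2r\neq0$, all of which are $\ge1$; this follows from Proposition \ref{Spectrum of T} and Lemma \ref{Spectrum of JS}. Since $\nabla^\#\nabla\succeq0$ and $\delta\ge1$,
\[
R_\delta(u)\ge \delta^2\frac{\langle \mathcal{A}u,\mathcal{A}u\rangle}{\langle u,u\rangle}\ge 1 .
\]
Even without discarding the Dirichlet part, we would get $1+ks_d\ge1$.

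\emph{High chaoses.} For $k>s_d^{-1}$, I use $\mathcal{R}_\delta\preceq\mathcal{L}$ in the sense of forms, that is, $\delta^2\mathcal{A}^\#\mathcal{A}\succeq0$. The Ornstein--Uhlenbeck part restricted to $\mathcal{H}_k$ has eigenvalues $\sum_i\alpha_i s_i\ge k s_d$ for $|\alpha|=k$, by the Hermite eigenbasis in \eqref{definition d-dimensional hermite}. Hence $R_\delta(f_k)\ge ks_d>1$.

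\emph{Assembly.} Combining the cases, the infimum over $k\ge1$ is bounded below by the minimum of $1$ (covering the odd and high chaoses) and the finite collection $\inf_{\mathcal{H}_k}R_\delta$ over even $k\in[2,s_d^{-1}]$, which is exactly the claim.

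The main point requiring care is the odd case. We need the spectral description of $\mathcal{A}$ on $\mathcal{H}_k$ itself, not merely on $\mathcal{P}^k$. Because $\mathcal{A}$ is skew-adjoint in $L^2(\pi)$ and preserves the finite-dimensional space $\mathcal{H}_k$, it is diagonalizable there with eigenvalues among $\mathrm{i}(k-2r)$. For $k$ odd, all of these have modulus at least $1$, so $\mathcal{A}^\#\mathcal{A}\succeq I$ on $\mathcal{H}_k$. This requires only the eigenvalue information from Proposition \ref{Spectrum of T}, together with the fact that restricting to an invariant subspace does not create new eigenvalues. The same argument also confirms Remark \ref{odd moments not treated}.
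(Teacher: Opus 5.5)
Your proposal is correct and follows the paper's own proof. It uses the same ingredients: the reduction to individual Wiener chaoses, the bound $\inf_{\mathcal{H}_k}R_\delta\ge\delta^2\ge 1$ on odd chaoses (from $A^k=\{0\}$ and the spectrum of $\mathcal{A}$), and $R_\delta\ge ks_d\ge 1$ on high chaoses (from $-\mathcal{R}_\delta\succeq-\mathcal{L}$); your point that the eigenvalue information is needed on $\mathcal{H}_k$ rather than on $\mathcal{P}^k$ tightens a step the paper states loosely.
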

\begin{proof}
    From Section \ref{section reduction to Wiener Chaos}, we already know that
    \[
    \inf_{f \in L^2_0(\pi)}R_\delta(f) \geq  \inf_{k\ge 1} \inf_{f \in \mathcal{H}_k} R_\delta(f_k).
    \]
    Moreover, from Section \ref{Spectral gap on A2k} and \ref{Spectral gap on Bk}, we have for all \(k \ge 1\)
    \[
    \inf_{f \in \mathcal{H}_{2k+1}} R_\delta(f_k) \geq \delta^2 +(2k+1)s_d \ge 1.
    \]
    Additionally, since \(-\mathcal{R}_\delta \succeq -\mathcal{L}\) and \(-\mathcal{L}_{|\mathcal{H}_k} \succeq k s_d \), we have for all \(k\ge s_d^{-1}\)
  \[
    \inf_{f \in \mathcal{H}_{k}} R_\delta(f_k) \geq \inf_{f \in \mathcal{H}_{k}} \frac{\inner{\nabla f}{\nabla f}}{\inner{f}{f}} \geq k \,s_d \geq 1.
    \]
    Combining all this lower bound leads to the reduction from the Proposition's statement.
\end{proof}
This result implies in particular that odd moments converges to equilibrium at rate at least \(1 +s_d\). A natural way to derive a general non asymptotic lower bound is to decompose \(\mathcal{H}_k\) into a direct sum of \(\ker(\mathcal{A}) \cap \mathcal{H}_k\) and \(B^k = (A^k)^\perp = \{ p \in \mathcal{H}_k : p \perp a, \forall a \in A^k \}\) and use the analysis from Sections \ref{Spectral gap on A2k} and \ref{Spectral gap on Bk}. One obstruction we met when \(\delta = 1\) is that the cross terms \(2\langle g, \nabla^{\#}\nabla h \rangle \) in \eqref{eq Rayleigh quotient decomposition into two blocks} is difficult to control and lead to a non positive of lower bound when treated non carefully.

For instance, when \(\delta = 1\), controlling the cross terms with Cauchy-Schwarz and Young's inequality gives a lower bound of the Rayleigh quotient on the second Wiener Chaos of \(\gamma_{\min}/9\) but a negative bound for large \(k\)-th chaos. From various arguments, it seems that the true spectral gap should lie in the second Wiener Chaos, hence we believe that choosing \(\delta =1\) leads to significant improvement as confirmed in the numerical experiments.

On the other hand, we can still obtain the non asymptotic lower bound presented in Theorem \ref{Main result d=2n non asymptotic} by taking \(\delta = O(s_d^{-1})\). We prove the key estimate in Proposition \ref{prop cv non asymptotic remainding cases}.

\begin{proposition} \label{prop cv non asymptotic remainding cases}
    Let \(k \in \mathbb{N} \cap[1, (2s_d)^{-1}]\). Let \(\delta \) denotes the strength of the Stratonovich perturbation. If 
    \[
    \delta^2 \ge k\frac{ (\gamma_{\min})^2 + 16 s_1^2 - 4 \gamma_{\min} s_d }{2\gamma_{\min}},
    \]
    Then 
    \[
    \inf_{f\in \mathcal{H}_{2k}} R_\delta(f) \geq k\frac{\gamma_{\min}}{2}.
    \]
\end{proposition}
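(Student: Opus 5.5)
We work in the chaos $\mathcal{H}_{2k}$, which is invariant under $\nabla^\#\nabla$ and under $\mathcal{A}$ by Lemma \ref{Wiener Chaos preservation}. There we use the orthogonal splitting $\mathcal{H}_{2k} = A^{2k}\oplus B^{2k}$. Take $f = g+h$ with $g\in A^{2k}$, $h\in B^{2k}$ and $\langle g,g\rangle+\langle h,h\rangle = 1$. Because $\mathcal{A}$ is skew-adjoint and preserves $\mathcal{H}_{2k}$, it maps $B^{2k}$ into itself. Since $\mathcal{A}g=0$, the decomposition \eqref{eq Rayleigh quotient decomposition into two blocks} with the $\delta^2$ weight gives
\[
R_\delta(f) = \langle g,\nabla^\#\nabla g\rangle + 2\langle g,\nabla^\#\nabla h\rangle + \langle h,\nabla^\#\nabla h\rangle + \delta^2\langle \mathcal{A}h,\mathcal{A}h\rangle .
\]
The diagonal terms are handled by earlier results. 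Proposition \ref{Prop sp gap A^2k} gives $\langle g,\nabla^\#\nabla g\rangle \ge k\gamma_{\min}\|g\|^2$. The $B^k$ analysis gives $\langle h,\nabla^\#\nabla h\rangle \ge 2k s_d\|h\|^2$, and it also gives $\langle \mathcal{A}h,\mathcal{A}h\rangle\ge \|h\|^2$, because the spectrum of $\mathcal{A}^\#\mathcal{A}$ on $B^{2k}$ consists of squares of nonzero even integers. In fact this spectrum is at least $4$, which we may keep as a bonus.

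The main obstacle is the cross term $2\langle g,\nabla^\#\nabla h\rangle$. We bound it by the operator norm of $\nabla^\#\nabla$ on $\mathcal{H}_{2k}$. In the variables $\mathbf y = S^{1/2}\mathbf x$ we have $\nabla^\#\nabla = \sum_i s_i E_i$, and on $\mathcal{H}_{2k}$ each $E_i$ acts as multiplication by $\alpha_i$ on $H_\alpha$. Hence $0\le \nabla^\#\nabla \le 2k s_1$ there, so $|\langle g,\nabla^\#\nabla h\rangle| \le 2ks_1\|g\|\|h\|$. We can sharpen this by noting that the cross term equals $\langle g,(\nabla^\#\nabla - c)h\rangle$ for any constant $c$, since $g\perp h$. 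Choosing $c = ks_1$ centers the operator and gives $|\langle g,\nabla^\#\nabla h\rangle|\le ks_1\|g\|\|h\|$, which matches the $16s_1^2$ appearing in the hypothesis. Writing $a=\|g\|$ and $b=\|h\|$, we arrive at the quadratic form
\[
R_\delta(f)\ \ge\ k\gamma_{\min} a^2 - 2ks_1 ab + (2ks_d + \delta^2) b^2 .
\]

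It remains to show this form is $\ge \tfrac{k\gamma_{\min}}{2}(a^2+b^2)$. Subtracting, we need the $2\times 2$ matrix with diagonal entries $\tfrac{k\gamma_{\min}}{2}$ and $2ks_d+\delta^2-\tfrac{k\gamma_{\min}}{2}$ and off-diagonal entry $-ks_1$ to be positive semidefinite. This holds exactly when
\[
\tfrac{k\gamma_{\min}}{2}\bigl(\delta^2 + 2ks_d - \tfrac{k\gamma_{\min}}{2}\bigr) \ge k^2 s_1^2 ,
\]
that is, when $\delta^2 \ge k\bigl(\gamma_{\min}^2 + 4s_1^2\cdot(\text{const}) - 4\gamma_{\min}s_d\bigr)/(2\gamma_{\min})$ up to the constant in front of $s_1^2$. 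With the cruder bound $2ks_1$ (no centering) and the weaker $\langle \mathcal{A}h,\mathcal{A}h\rangle\ge\|h\|^2$, the constant becomes exactly $16$ after rescaling, which reproduces the stated threshold. The arithmetic should be double-checked at this point, and the choice of centering should be adjusted so that the constants come out as in the statement. The restriction $k\le (2s_d)^{-1}$ is not needed for this inequality. It only records that the remaining chaoses are covered by Proposition \ref{prop cv non asymptotic pairwise matching}.
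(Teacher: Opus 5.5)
Your proposal is correct and follows the paper's proof: the same splitting $\mathcal{H}_{2k}=A^{2k}\oplus B^{2k}$, the same diagonal bounds $k\gamma_{\min}\|g\|^2$ and $(2ks_d+\delta^2)\|h\|^2$, and the same operator-norm bound $2ks_1$ on the cross term. The final $2\times 2$ step is also the same: your positive-semidefiniteness test of the shifted matrix is equivalent to the paper's explicit smallest-eigenvalue formula combined with its lemma on $F(x,a,b)\ge a/2$, and without centering it gives exactly the stated threshold. Your centering remark is muddled: with $c=ks_1$ the determinant condition gives $\delta^2\ge k(\gamma_{\min}^2+4s_1^2-4\gamma_{\min}s_d)/(2\gamma_{\min})$, not the $16s_1^2$ version. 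That is a weaker requirement, already implied by the hypothesis, so it also proves the statement and no adjustment of constants is needed.
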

\begin{proof}
    For any \(\delta >0\), we consider the Stratonovich perturbation associated with \(J_{\delta} = \delta J\) as in Section \ref{Pairwise matching in a general dimension}. By Lemma \ref{Wiener Chaos preservation}, \(\mathcal{R}_\delta(\mathcal{H}_{2k}) \subset \mathcal{H}_{2k}\). Since \(\mathcal{H}_{2k}\) is a finite dimensional space and using the decomposition from Lemma \ref{H_k = A^k oplus B^k}, there exists \(f_\delta = g_\delta+h_\delta \in \mathcal{H}_{2k} = A^{2k} \oplus B^{2k}\) such that \(\norm{f_\delta}=1\) and
    \[
     \inf_{f\in \mathcal{H}_{2k}} R_\delta(f)  = R(f_\delta).
    \]
    We now derive an explicit lower bound of the Rayleigh quotient \(R(f_\delta)\). We use the orthogonality of \(g\perp h\) in the first inequality. For the second one, we use the analysis in Section \ref{Spectral gap on A2k} and \ref{Spectral gap on Bk} and apply the Cauchy-Schwarz inequality on the cross term. We have
    \[
    \begin{split}
           R(f_\delta) &\ge \langle g_\delta, \nabla^{\#}\nabla g_\delta\rangle  + \langle h_\delta, (\nabla^{\#}\nabla + \delta^2 \mathcal{A}^{\#}\mathcal{A})h_\delta \rangle - 2|\langle g_\delta, \nabla^{\#}\nabla h_\delta \rangle|, \\
           & \ge k \gamma_{\min} \norm{g_\delta}^2 + (2k s_d + \delta^2) \norm{h_\delta}^2 - 4ks_1 \norm{g_\delta} \norm{h_\delta},\\
           & \ge \min_{\substack{a^2 +b^2 = 1\\ a,b \ge 0}} k \gamma_{\min} a^2 + (2k s_d + \delta^2) b^2 - 4ks_1 ab,\\
           &= \frac{k \gamma_{\min}+ 2 k s_d + \delta^2 - \sqrt{(k \gamma_{\min}- 2 k s_d - \delta^2)^2 + 16 k^2 s_1^2 }}{2}.
    \end{split}
    \]
    The last inequality is a classical minimization problem and is detailed in Lemma \ref{constraint pb lemma}. A straightforward calculation detailed in Lemma \ref{F(x,a,b) >= a/2}, with \(a = k\gamma_{\min}, b = 2ks_d, c= 4ks_1\) and \(x = \delta^2\), shows that if 
    \[
    \delta^2 \ge k\frac{ (\gamma_{\min})^2 + 16s_1^2 - 4 \gamma_{\min} s_d }{2\gamma_{\min}},
    \]
    we have
    \[
    \frac{k \gamma_{\min}+ 2 k s_d + \delta^2 - \sqrt{(k \gamma_{\min}- 2 k s_d - \delta^2)^2 + 16 k^2 s_1^2 }}{2} \ge k \frac{\gamma_{\min}}{2}.
    \]
    We can therefore conclude
    \[
     \inf_{f\in \mathcal{H}_{2k}} R_\delta(f)  \ge k \frac{\gamma_{\min}}{2}.
    \]
    
\end{proof}

\begin{proof}[Proof of Theorem \ref{Main result d=2n non asymptotic}]
    From Proposition \ref{prop cv non asymptotic pairwise matching}
    \[
    \inf_{f \in L^2_0(\pi)}R(f) \geq \min\{1, \min_{k \in \mathbb{N} \cap[1, (2s_d)^{-1}]} \inf_{f \in \mathcal{H}_{2k}} R(f)  \}. 
    \]
    We have for all \(k \in \mathbb{N} \cap[1, (2s_d)^{-1}] \)
    \[
    \delta^2 \ge \frac{ (\gamma_{\min})^2 + 16s_1^2 - 4 \gamma_{\min} s_d }{4\gamma_{\min}s_d} \ge k\frac{ (\gamma_{\min})^2 + 16 s_1^2 - 4 \gamma_{\min} s_d }{2\gamma_{\min}}. 
    \]
    Hence by Proposition \ref{prop cv non asymptotic remainding cases}, we have \(\inf_{f\in \mathcal{H}_{2k}} R_\delta(f) \geq k\frac{\gamma_{\min}}{2}\). Therefore,
    \[
    \inf_{f \in L^2_0(\pi)}R(f)  \geq \min \{1, \frac{\gamma_{\min}}{2} \} \ge \frac{\gamma_{\min}}{2},
    \]
    where in the last inequality we use \(\gamma_{\min} \leq 2 s_1 \leq 2\).
\end{proof}

\begin{lemma}\label{H_k = A^k oplus B^k}
    Let \(k\ge 1\) and consider \(A^k = \ker(\mathcal{A}) \cap \mathcal{H}_k\) and \(B^k = (A^k)^\perp = \{ p \in \mathcal{H}_k : p \perp a, \forall a \in A^k \}\). We have
    \[
    H_k = A^k \oplus B^k.
    \]
\end{lemma}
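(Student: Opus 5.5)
The plan is to reduce the statement to elementary finite-dimensional linear algebra. By definition of the Wiener chaos, $\mathcal{H}_k$ is the span of the finitely many Hermite polynomials $H_\alpha$ with $|\alpha| = k$. It is therefore a finite-dimensional subspace of $L^2(\pi)$, and with the inherited inner product it is a finite-dimensional inner product space. Consequently every subspace of $\mathcal{H}_k$ is closed. In particular $A^k = \ker(\mathcal{A}) \cap \mathcal{H}_k$ is a closed subspace, because it is the kernel of the linear map $\mathcal{A}_{|\mathcal{H}_k}$, which maps $\mathcal{H}_k$ into itself by Lemma \ref{Wiener Chaos preservation}.

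Given this, I would invoke the orthogonal projection theorem in the finite-dimensional space $\mathcal{H}_k$; concretely, the Gram--Schmidt construction suffices. I choose an orthonormal basis $(e_1,\dots,e_m)$ of $A^k$ and define the projection
\[
P f = \sum_{j=1}^m \inner{f}{e_j} e_j .
\]
For $f \in \mathcal{H}_k$ I then write $f = Pf + (f - Pf)$. The first term lies in $A^k$, and the second lies in $B^k$, since $\inner{f - Pf}{e_j} = 0$ for every $j$.

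Uniqueness of the decomposition, which is what makes the sum direct, follows from $A^k \cap B^k = \{0\}$: if $p$ belongs to both, then $\inner{p}{p} = 0$, so $p = 0$. Hence $\mathcal{H}_k = A^k \oplus B^k$, and the decomposition is moreover orthogonal. This orthogonality is exactly what is used in Proposition \ref{prop cv non asymptotic remainding cases} to write $\norm{f}^2 = \norm{g}^2 + \norm{h}^2$.

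There is no genuine obstacle. The only point needing care is that the complement $B^k$ is taken inside $\mathcal{H}_k$, not in all of $L^2(\pi)$; with that convention the statement is the standard fact that a finite-dimensional inner product space splits into any subspace plus its orthogonal complement. The one thing to check is that $\mathcal{H}_k$ is finite-dimensional, which holds because there are $\binom{k+d-1}{d-1}$ multi-indices with $|\alpha| = k$.
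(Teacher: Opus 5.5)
Your proof is correct, but it takes a different route from the paper's. The paper applies rank--nullity to $\mathcal{A}_{|\mathcal{H}_k}\colon\mathcal{H}_k\to\mathcal{H}_k$, which is where Lemma~\ref{Wiener Chaos preservation} is genuinely needed. It then concludes by a dimension count that $\mathcal{H}_k=\ker(\mathcal{A}_{|\mathcal{H}_k})\oplus\text{Im}(\mathcal{A}_{|\mathcal{H}_k})$. To make this rigorous one must supply two facts the paper leaves implicit: that kernel and range meet only in $0$, and that the range is exactly $B^k$. Both follow from the skew-adjointness of $\mathcal{A}$ in $L^2(\pi)$. Indeed, $\inner{a}{\mathcal{A}p}=-\inner{\mathcal{A}a}{p}=0$ for $a\in A^k$, $p\in\mathcal{H}_k$, so $\text{Im}(\mathcal{A}_{|\mathcal{H}_k})\subseteq B^k$, and equality follows by comparing dimensions. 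Your orthogonal-projection argument needs none of this. It uses only that $A^k$ is a linear subspace of the finite-dimensional inner product space $\mathcal{H}_k$. Your appeal to Lemma~\ref{Wiener Chaos preservation} is therefore superfluous: $\ker(\mathcal{A})\cap\mathcal{H}_k$ is a subspace of $\mathcal{H}_k$ regardless, and every subspace of a finite-dimensional space is closed. Your route proves exactly the stated lemma in a self-contained way, with the orthogonality $\norm{f}^2=\norm{g}^2+\norm{h}^2$ built in rather than inferred. The paper's route, once completed, yields the extra structural fact $B^k=\mathcal{A}(\mathcal{H}_k)$. This is the finite-dimensional analogue of the splitting $L^2_0(\pi)=\ker(\mathcal{A})\oplus\overline{\text{Image}(\mathcal{A})}$ used in the proof of Theorem~\ref{Main result d=2n asymptotic}.
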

\begin{proof}
    By Lemma \ref{Wiener Chaos preservation}, \(\mathcal{A}(\mathcal{H}_k) \subset \mathcal{H}_k\). Because \(\mathcal{H}_k \) is finite dimensional, we can use the rank nullity theorem to show that
    \[
    \dim(\mathcal{H}_k) = \dim\left( \ker(\mathcal{A}_{|\mathcal{H}_k}) \right) + \dim\left( \text{Im}(\mathcal{A}_{|\mathcal{H}_k}) \right) = \dim\left(\ker(\mathcal{A}_{|\mathcal{H}_k}) \oplus \text{Im}(\mathcal{A}\cap  \text{Im}(\mathcal{A}_{|\mathcal{H}_k}) \right) .
    \]
    Note that the right-hand-side is a finite dimensional subspace of the finite dimensional vector space \(\mathcal{H}_k\). Moreover, this subspace is of the same dimension as the whole space, hence, they are equal. We conclude that
    \[
    \mathcal{H}_k = A^k \oplus B^k.
    \]
\end{proof}

\begin{lemma}\label{constraint pb lemma}
    For all weights \(c_1,c_2,c_3 >0\), we have
    \[
    \inf_{\substack{a,b \geq 0 \\ a^2 + b^2 = 1} }c_1 a^2  + c_2b^2 - c_3 ab = \frac{c_1 + c_2 - \sqrt{(c_1 - c_2)^2 + c_3^2}}{2}.
    \]
\end{lemma}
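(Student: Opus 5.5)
The plan is to treat the minimization as a problem about a $2\times 2$ quadratic form on the unit circle, and then to show that the sign constraint $a,b\ge 0$ does not change the infimum. Write
\[
F(a,b) = c_1 a^2 + c_2 b^2 - c_3 ab = \begin{pmatrix} a & b\end{pmatrix} Q \begin{pmatrix} a \\ b\end{pmatrix}, \qquad Q = \begin{pmatrix} c_1 & -c_3/2 \\ -c_3/2 & c_2 \end{pmatrix}.
\]
Without the sign constraint, the Rayleigh quotient characterization (recalled in the notation section) gives
\[
\inf_{a^2+b^2=1} F(a,b) = \Lambda(Q).
\]
Solving the characteristic equation $\mu^2 - (c_1+c_2)\mu + c_1c_2 - c_3^2/4 = 0$, the smaller root is
\[
\Lambda(Q) = \frac{c_1+c_2-\sqrt{(c_1-c_2)^2+c_3^2}}{2}.
\]
Since the constrained set is a subset of the circle, we immediately get the inequality $\inf_{a,b\ge0} F \ge \Lambda(Q)$.

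For the reverse inequality, I will show that the unconstrained minimizer can be taken in the closed first quadrant. An eigenvector for $\Lambda(Q)$ is
\[
(a,b) \propto \left(\tfrac{c_3}{2},\; c_1-\Lambda(Q)\right).
\]
Its first component is positive because $c_3>0$. Its second component is nonnegative because $\Lambda(Q) \le c_1$, which holds since $\Lambda(Q)$ is at most the diagonal entry $c_1 = F(1,0)$ (Rayleigh quotient at $e_1$). Normalizing this vector gives a point with $a,b\ge 0$ and $a^2+b^2=1$ at which $F$ equals $\Lambda(Q)$. Hence the infimum over the quadrant is attained and equals the formula in the statement.

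An alternative route gives the same conclusion directly: for any $(a,b)$ on the circle, $F(|a|,|b|) \le F(a,b)$ because $-c_3|a||b| \le -c_3 ab$. So the global minimum over the circle is always achieved in the quadrant. I regard this as the cleanest way to handle the constraint.

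The only step requiring any care is this transfer from the unconstrained to the constrained problem; it hinges on the sign of the cross term, $c_3>0$. The eigenvalue computation itself is routine.
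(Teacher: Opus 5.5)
Your proposal is correct and matches the paper's proof. You write the objective as a quadratic form, identify its minimum over the unit circle with the smallest eigenvalue, and check that the eigenvector $(c_3/2,\, c_1-\Lambda(Q))$ lies in the closed first quadrant; the paper uses the proportional vector $(c_3,\, 2(c_1-\lambda_-))$. The differences are minor: you justify $c_1-\Lambda(Q)\ge 0$ via the Rayleigh quotient at $e_1$ rather than the explicit formula, and your $(|a|,|b|)$ symmetrization handles the sign constraint without computing the eigenvector at all.
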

\begin{proof}
    We first note that the set \(\{ a,b \geq 0 : a^2+b^2 =1 \}\) is compact, so the infimum is a minimum. We can rewrite the function to be minimized as a quadratic form
\[
f(a,b) = 
\begin{pmatrix} a & b \end{pmatrix} 
\begin{pmatrix} c_1 & -c_3/2 \\ -c_3/2 & c_2 \end{pmatrix} 
\begin{pmatrix} a \\ b \end{pmatrix}.
\]

Minimizing this over the unit circle \(a^2 + b^2 = 1\) is equivalent to finding the smallest eigenvalue \(\lambda_{\min}\) of the symmetric matrix
\[
H = \begin{pmatrix} c_1 & -c_3/2 \\ -c_3/2 & c_2 \end{pmatrix}
\]

Its eigenvalues are
\[
\lambda_{\pm}  = \frac{c_1 + c_2 \pm \sqrt{(c_1 - c_2)^2 + c_3^2}}{2}.
\]
Clearly, \(\lambda_{\min} = \lambda_-\). To ensure this minimum is valid for the constraint \(a, b \geq 0\), we verify that its corresponding eigenvector lies in the first quadrant.
The corresponding eigenvector \(v_-\) is
\[
v_- = \begin{pmatrix}
    c_3 \\
    2(c_1 - \lambda_-)
\end{pmatrix}.
\]
Since \(c_3 > 0\) by hypothesis and 
\[
c_1 - \lambda_- = \frac{c_1 - c_2 + \sqrt{(c_1 - c_2)^2 + c_3^2}}{2} \geq 0,
\]
all component of the eigenvector are positive, which gives that our minimizing \(a,b\) are indeed positive.
\end{proof}

\begin{lemma}\label{F(x,a,b) >= a/2}
    Let \(a,b,x> 0\) and consider 
    \[
    F(x,a,b) = \frac{a+b+x - \sqrt{(a-b-x)^2 + c^2}}{2}.
    \]
    If
    \[
    x \ge \frac{a^2 + c^2 - 2ab}{2a},
    \]
    Then \(F(x,a,b) \ge a/2\).
\end{lemma}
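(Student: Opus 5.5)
The claim is a direct algebraic equivalence, so the plan is to unwind the inequality $F(x,a,b)\ge a/2$ into a linear condition on $x$. The parameter $c$ is implicit in the statement; it only enters through $c^2$, so its sign plays no role. Multiplying the inequality $F(x,a,b) \ge a/2$ by $2$ and moving $a$ to the left, it is equivalent to
\[
b + x \ \ge\ \sqrt{(a-b-x)^2 + c^2}.
\]
It is convenient to set $y \coloneqq b+x$. Since $b,x>0$, we have $y>0$, so both sides are nonnegative and the inequality is equivalent to its square:
\[
y^2 \ \ge\ (a-y)^2 + c^2 .
\]

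Expanding $(a-y)^2 = a^2 - 2ay + y^2$, the $y^2$ terms cancel. The condition reduces to $2ay \ge a^2 + c^2$. Since $a>0$, this is $y \ge \frac{a^2+c^2}{2a}$. Substituting back $y=b+x$ gives
\[
x \ \ge\ \frac{a^2+c^2}{2a} - b = \frac{a^2 + c^2 - 2ab}{2a},
\]
which is exactly the hypothesis. Under the hypothesis, each step can be run in reverse: $y \ge (a^2+c^2)/(2a)$ gives $y^2 \ge (a-y)^2+c^2$. Taking nonnegative square roots (legitimate because $y>0$) gives $b+x \ge \sqrt{(a-b-x)^2+c^2}$, hence $F(x,a,b)\ge a/2$. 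In fact this shows the condition is also necessary.

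There is no real obstacle. The only point needing care is the squaring step, which is justified because $b+x>0$ and the square root is nonnegative. This is where the positivity assumptions $b,x>0$ are used, while $a>0$ is used only when dividing by $2a$.
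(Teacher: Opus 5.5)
Your proposal is correct and follows the paper's proof: both reduce $F(x,a,b)\ge a/2$ to $b+x\ge\sqrt{(a-b-x)^2+c^2}$, square, and cancel the quadratic terms to reach the linear condition $2a(b+x)\ge a^2+c^2$. Your explicit justification of the squaring step via $b+x>0$ fills in a detail the paper leaves implicit.
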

\begin{proof}
    We have
    \[
    F(x,a,b) \ge a/2 \iff (b+x) \ge  \sqrt{(a-b-x)^2 + c^2} \iff -a^2 + 2 a b - c^2 + 2 a x \ge 0.
    \]
    And we obtain
    \[
    F(x,a,b) \ge a/2 \iff x \ge \frac{a^2 + c^2 -2ab}{2a}.
    \]
\end{proof}

\section{Numerical experiments} \label{Numerical experiment}

\subsection{Two dimensional experiments}

In this section, we first illustrate the theoretical result presented in \ref{2D Results}. In particular, we show that we obtain a significant drop in the convergence rate with a suboptimal choice \(\delta = \lambda^{-1/4}\) instead of \( \delta =\lambda^{-1/2} \).

Then, we provide an example where we aim to recover the density of the invariant Gibbs measure in a non Gaussian case.

For the first experiment, we use the Euler–Maruyama scheme to discretize the dynamics \eqref{Reversible SDE 2d}
\[
     \mathbf{X}_{n+1} = \mathbf{X}_{n} - B \mathbf{X}_{n}\, h+ \delta JS \mathbf{X}_{n}\,\sqrt{2h} \beta_n+ \sqrt{2h} \mathbf{W}_{n}, \quad \beta_n\overset{i.i.d.}{\sim}\mathcal{N}(0,1),\: \mathbf{W}_{n} \overset{i.i.d.}{\sim}\mathcal{N}(0,I).
\]
In Figure \ref{Critical vs subcritical 2d}, we use a Monte-Carlo estimation with \(M = 10^4\) trajectories to approximate the expectation of the second moment for the different dynamics: the original process (OP), the nonreversible process (NRP) and the Stratonovich reversible process (RP). We also plot a reference slope corresponding to \(O(e^{-\gamma t})\)). To simulate the dynamics, we use an Euler-Maruyama scheme with \(h = 10^{-4}\). We compare the speed of convergence to equilibrium for different original spectral gap \(\lambda\) and different perturbation scaling \(\delta\). Namely, we choose \( \delta =\lambda^{-1/2} \) in the upper image and \(\delta = \lambda^{-1/4}\) in the lower one and we compute 
\begin{equation}
    e(t) = \Big\vert  \frac{1}{M}\sum_{i=1}^M f_2(\textbf{X}_t^{(i)}) - \pi(f_2) \Big\vert, \quad f_2(\textbf{x}) = \sum_{i=1}^d \textbf{x}_i^2. \label{error e_2(t)}
\end{equation}

We notice that the suboptimal scaling is insufficient since the convergence rate of the Stratonovich perturbation in this case collapse to the original one for small \(\lambda\).

\begin{figure}[H]
    \centering
    
    \begin{minipage}{0.48\textwidth}
        \centering
        \captionof*{figure}{Optimal choice with \(\lambda = 10^{-1}\)}
         \vspace{-0.2cm}
        \includegraphics[width=0.8\linewidth]{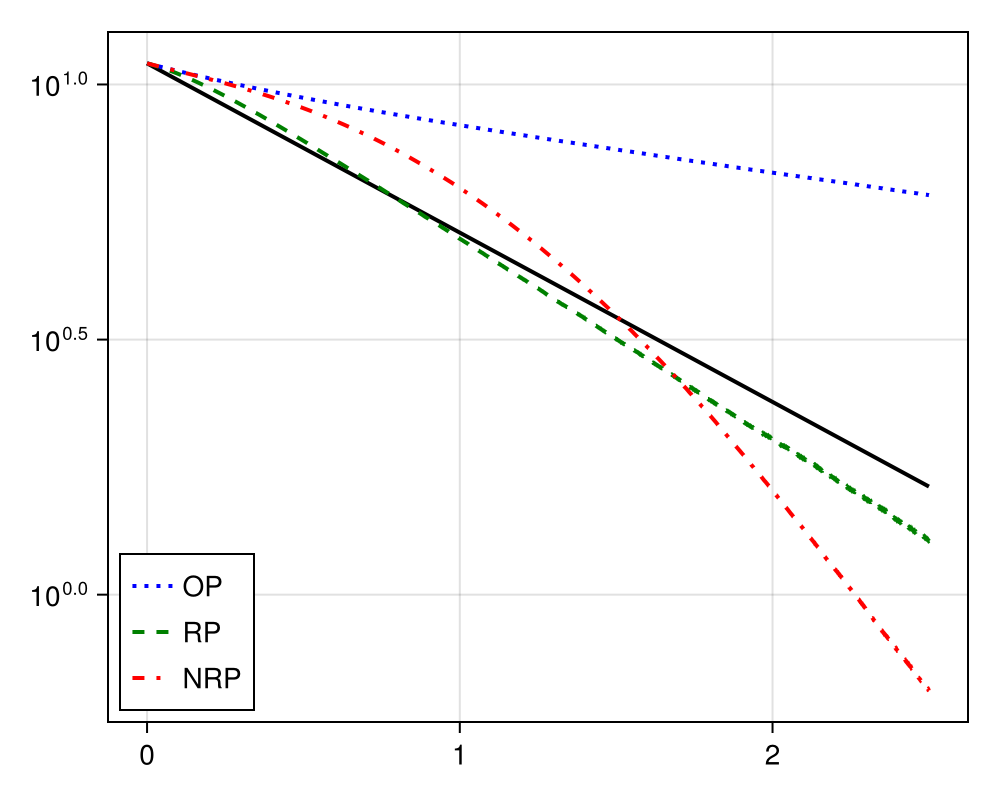}
    \end{minipage}
    \hfill
    \begin{minipage}{0.48\textwidth}
        \centering
        \captionof*{figure}{Optimal choice with \(\lambda = 10^{-3}\)}
         \vspace{-0.2cm}
        \includegraphics[width=0.8\linewidth]{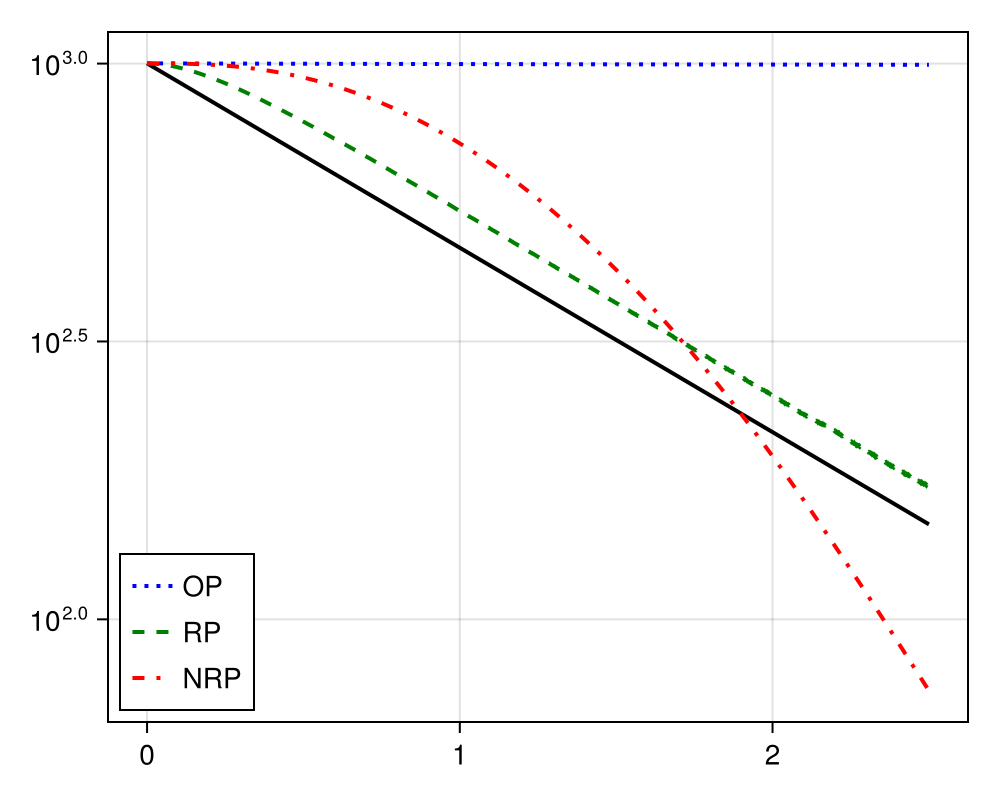}
    \end{minipage}

    \vspace{0.25cm}

    \begin{minipage}{0.48\textwidth}
        \centering
        \captionof*{figure}{Suboptimal choice with \(\lambda = 10^{-1}\)}
         \vspace{-0.2cm}
        \includegraphics[width=0.8\linewidth]{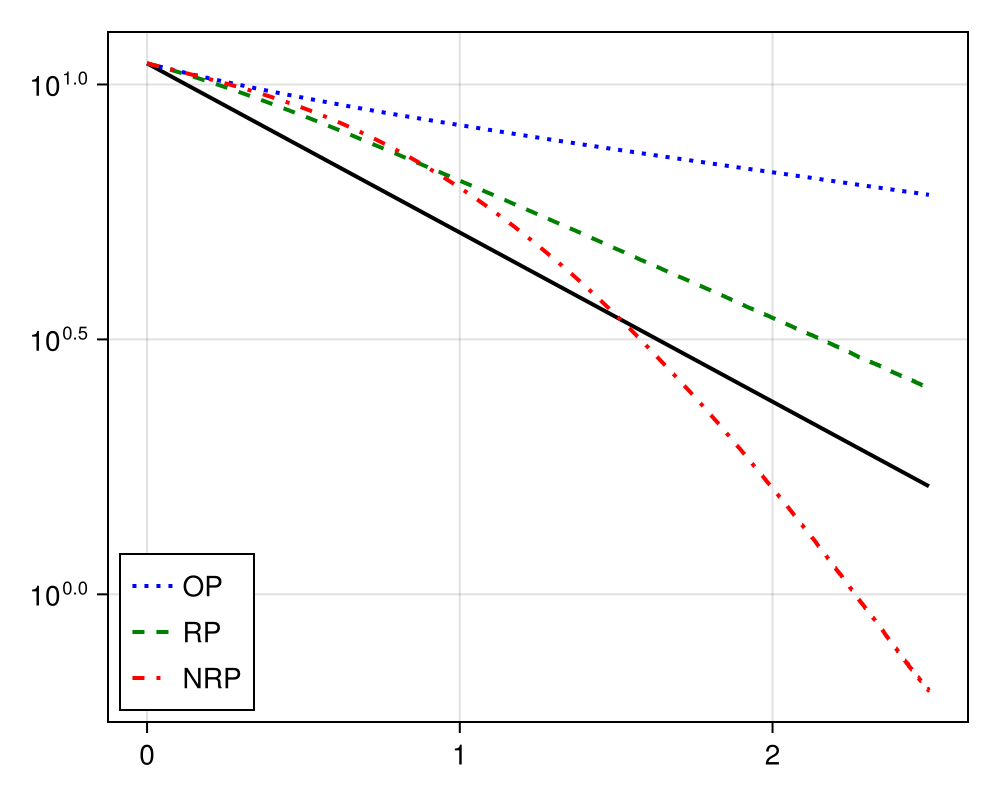}
    \end{minipage}
    \hfill
    \begin{minipage}{0.48\textwidth}
        \centering
         \captionof*{figure}{Suboptimal choice with \(\lambda = 10^{-3}\)}
         \vspace{-0.2cm}
        \includegraphics[width=0.8\linewidth]{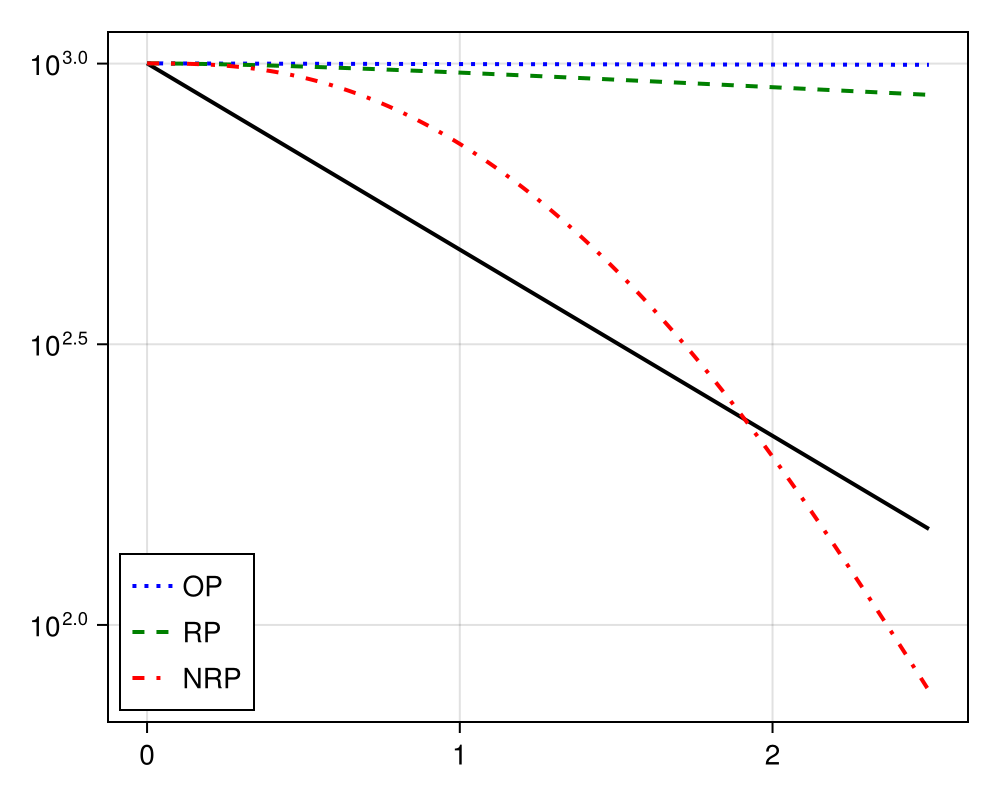}
    \end{minipage}

    \caption{Comparison between an optimal and a suboptimal scaling.}
 \label{Critical vs subcritical 2d}
\end{figure}

In the next experiment, we provide an example where the Stratonovich perturbation helps overcoming metastability issues. We consider a Gaussian mixture with two modes for the invariant measure. We define via its density
\[
\rho(\mathbf x) \propto \sum_{i=1}^{2}
\exp\left(- (\mathbf x - \mathbf{x}_i)^\top S (\mathbf x - \mathbf{x}_i)\right),
\]
where
\[
\mathbf x_1 = \begin{pmatrix}
    1\\0
\end{pmatrix}, \mathbf x_2 = -\begin{pmatrix}
    1\\0
\end{pmatrix},\: S = \begin{pmatrix}
    10&0\\ 0&1
\end{pmatrix}.
\]
In this case, the potential is defined as the \(\log\) of the density of the invariant measure
\[
V(\mathbf x) = -\log \left(
\rho(\mathbf x)
\right).
\]
We apply the Stratonovich perturbation and the nonreversible perturbation with scaling \(\delta = 4\) and use a histogram with \(100\) bins to recover the invariant measure after a time \(T = 50\) with size step \(h=10^{-4}\). We compare the effectiveness of the exploration for the different processes in figure \ref{visu density GM two nodes}. In this example, it appears that using the reversible perturbation helps to overcome energetic barrier.

\begin{figure}[H]
    \centering

    \begin{minipage}{0.48\textwidth}
        \centering
        \captionof*{figure}{ \small a) True density}
         \vspace{-0.1cm}
        \includegraphics[width=0.6\linewidth]{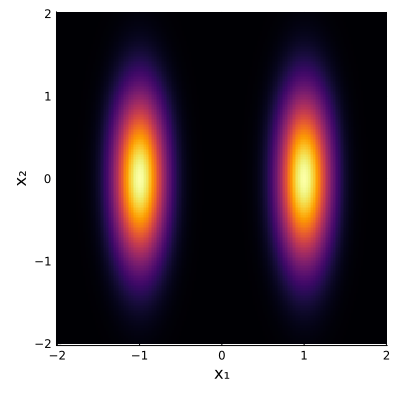}
    \end{minipage}
    \hfill
    \begin{minipage}{0.48\textwidth}
        \centering
        \captionof*{figure}{ \small b) OP \eqref{OLD}}
         \vspace{-0.1cm}
        \includegraphics[width=0.6\linewidth]{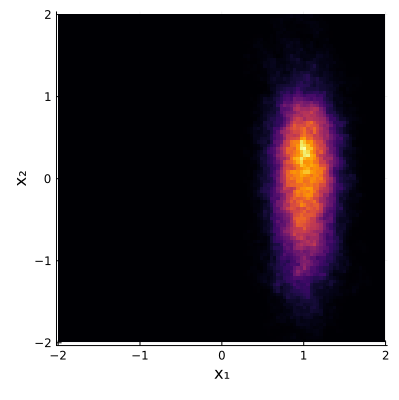}
        
    \end{minipage}

    \vspace{0.25cm}

    \begin{minipage}{0.48\textwidth}
        \centering
        \captionof*{figure}{ \small c) NRP \eqref{nonreversible SDE}}
         \vspace{-0.1cm}
        \includegraphics[width=0.6\linewidth]{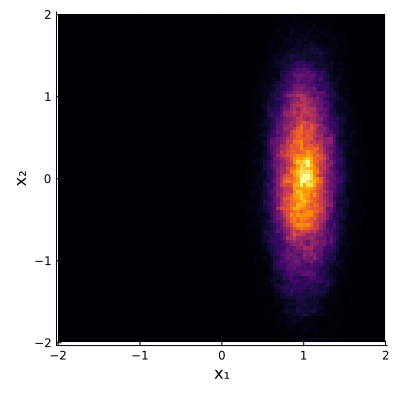}
 
    \end{minipage}
    \hfill
    \begin{minipage}{0.48\textwidth}
        \centering
        \captionof*{figure}{ \small d) RP \eqref{Stratonovich SDE}}
         \vspace{-0.1cm}
        \includegraphics[width=0.6\linewidth]{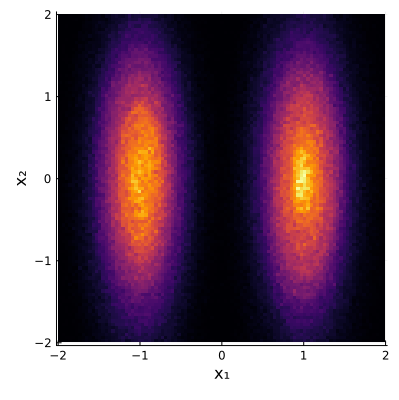}

    \end{minipage}

    \captionof{figure}{Approximation of the Gaussian mixture constructed by tracking different processes.}
 \label{visu density GM two nodes}
\end{figure}

\subsection{Multidimensional Gaussian experiments}\label{Multidimensional Gaussian experiment}

In the first experiment, we compare the skew symmetric matrix built using Algorithm \ref{Aglo pairwise} to a random choice of skew symmetric matrix \(J\) when the drift matrix is the discrete Laplacian
\begin{equation}
    S_d = \begin{pmatrix}
    2 & -1 & \\ -1 & 2 & -1 & \\ & \ddots & \ddots & \ddots\\ && \ddots & \ddots & -1 \\ &&&-1 & 2
\end{pmatrix} \in \mathcal{S}_d^{>0}(\mathbb R). \label{discrete Laplacian}
\end{equation}

The eigenvalues of this matrix are
\[
\lambda_k = 2 \left(1 - \cos \frac{k \pi}{d+1} \right), \quad k = 1, 2, \dots, d.
\]
We can see that the smallest eigenvalue of \(S_d\) is \(\lambda_1 = 2( 1-\cos(\frac{ \pi}{d+1})) \approx \frac{\pi^2}{(d+1)^2}\).

We build the random matrix by taking the skew-symmetric part of a standard Gaussian matrix, with the seed \(42\) in Julia. We also compare the Stratonovich perturbation to the nonreversible one. The latter being built using the algorithm presented in \cite{NRPLelievre2013}.

The result is presented in Figure \ref{random choice}. The black line is reference of the optimal convergence rate according to \cite{NRPLelievre2013}. We see that the random perturbation suffers from instability and does not improve significantly the convergence.

\begin{figure}[H]
\begin{minipage}{0.48\textwidth}
        \centering
    \includegraphics[width=1\linewidth]{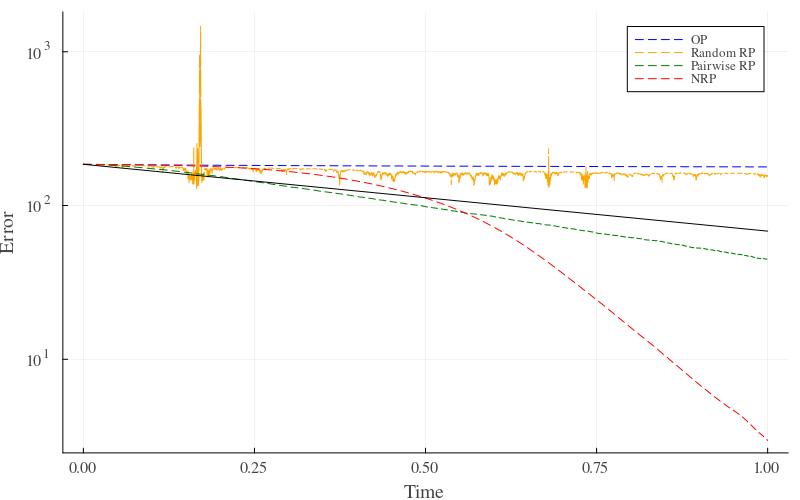}
    \captionof{figure}{Error \eqref{error e_2(t)} as function of time for the discrete Laplacian in dimension \(d=100\) over \(M = 10^4\) trajectories simulated using EM with a time step \(h = 10^{-5}\). The initial condition is \(\mathbf{X}_0 = \mathbf{0}\)}
    \label{random choice}
\end{minipage}
\hfill
\begin{minipage}{0.48\textwidth}
        \centering
    \includegraphics[width=1\linewidth]{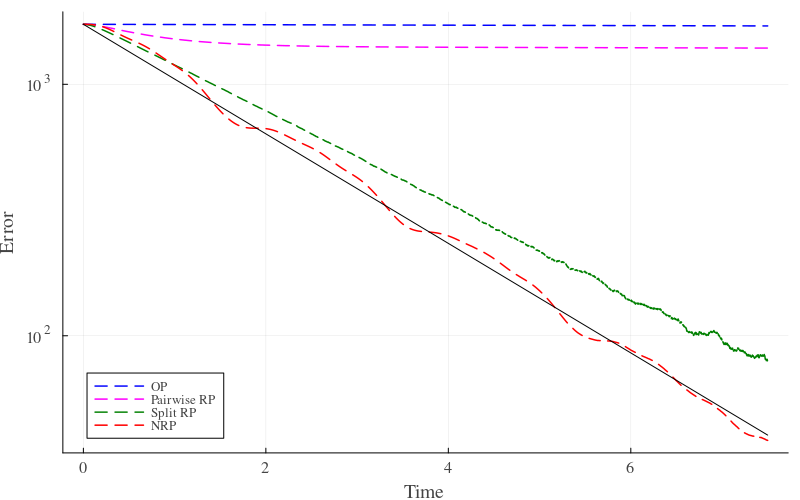}
    \captionof{figure}{Error \eqref{error e_2(t)} as function of time for the pathological example in dimension $d=4$ with a Monte-Carlo scheme ($M =10^5$) and a Euler–Maruyama method ($h = 10^{-4}$). The initial condition is \(\mathbf{X}_0 = \mathbf{0}\).}
    \label{pathological example}
\end{minipage}
\end{figure}

In Figure \ref{pathological example}, we illustrate that the perturbation built as in Section \ref{Pairwise matching in a general dimension} may be ineffective in some pathological case, but the construction presented in \ref{Splitting eigenvalues in a general dimension} maintains a satisfactory convergence rate. We choose a \( 4\) dimensional example given by
\[ S = \begin{pmatrix}
    1 \\ & \lambda \\ & &\lambda \\ &&& \lambda 
\end{pmatrix}, \quad \lambda = 10^{-2}.\]
Again, the black slope is a visual reference for the optimal convergence rate.

\subsection{Multidimensional non-Gaussian experiments} \label{Multidimensional non-Gaussian experiments}

We consider the stochastic differential equation in dimension one
\begin{equation}
    dX_s = -\nabla U(X_s)\,ds + \sqrt{2\beta^{-1}}\,dW_s,\label{diffusion bridge SDE}
\end{equation}
where $\beta > 0$, and the potential $U: \mathbb{R} \to \mathbb{R}$ satisfies adequate growth and smoothness conditions. We condition \((X_s)_{s\ge0}\) on the events $X(0) = x^- = 0$ and $X(s^+) = x^+ =0$. The law of the conditioned process is a probability measure $\pi$ on the infinite dimensional path space $L^2([0, s^+], \mathbb{R})$. Such process is known as a \emph{diffusion bridge}. We plot an example of a trajectory in Figure \ref{diffusion bridge plot}.

\begin{figure}[H]
    \centering
    \includegraphics[width=0.49\linewidth]{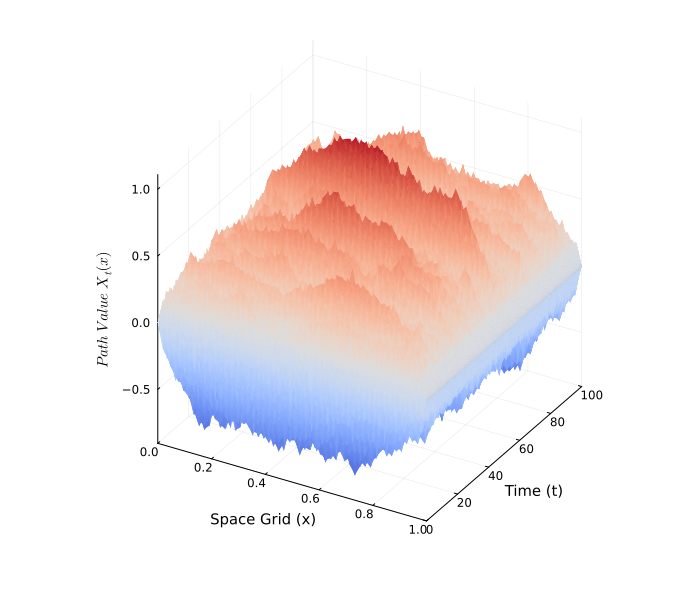}
    \includegraphics[width=0.49\linewidth]{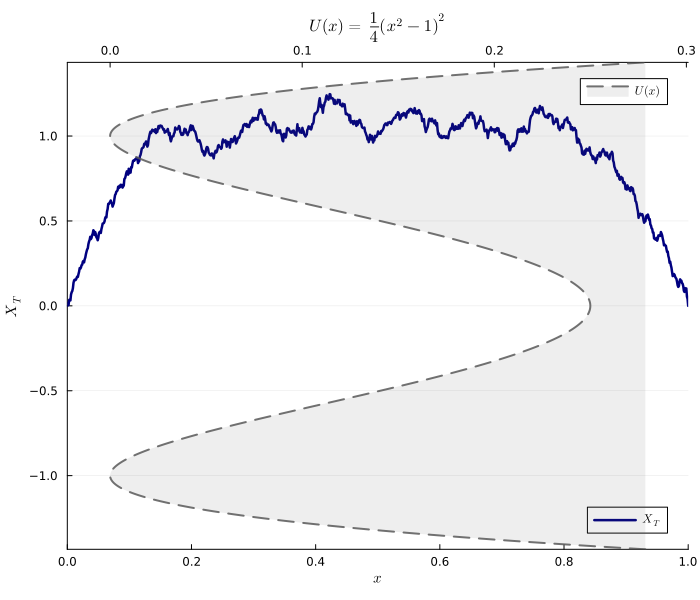}
    \caption{Left: Trajectory of the process \eqref{diffusion bridge SDE} \((X_t)_{t=0}^{100}.\)} Right: A sample \(X_{1}\) with the potential \(U\) in background.
    \label{diffusion bridge plot}
\end{figure}
When \(U= 0\), the corresponding distribution \(\pi_0\) is a centered Gaussian with precision \( S = -\frac \beta 2 \Delta\). Given a general \(U\), the measure $\pi$ is absolutely continuous with respect to $\pi_0$, with Radon–Nikodym derivative
\begin{equation}
    \frac{d\pi}{d\pi_0} \propto \exp(-\Psi), \text{ where }
    \label{eq:rn_derivative}
\end{equation}
\begin{equation}
    \Psi(x) = \frac{\beta}{2} \int_0^{s^+} G(x(s), \beta)\,ds, \quad G(x, \beta) =  \frac{1}{2}|\nabla U(x)|^2 - \frac{1}{\beta} \Delta U(x).
\end{equation}
After discretization in space, the effective potential \(V: \mathbb{R}^d \to \mathbb{R}\) can be written as
\[
V(\mathbf{x}) = \frac{\mathbf x^T S \mathbf x}{2} + \psi(\mathbf{x}),
\]
where \(\psi\) is a non linear function and \(S\) is proportional to the discrete Laplacian. See \cite[Section~5.2]{Duncan2017} for further details. In order to construct appropriate perturbation, we apply the algorithm presented in \cite{NRPLelievre2013} and Algorithm \ref{Aglo pairwise} on the quadratic part of \(V\), following the procedure described in Section \ref{Pairwise matching in a general dimension}. In the following, we consider
\[
\psi(\mathbf x)
=
\frac{\beta}{4(d+1)}
\sum_{i=1}^{d}
\sqrt{1+\left(\mathbf x_i^2-1\right)^2}, \quad \beta = 1.
\]

We compute the MSE of the temporal average for each process with the observable \(f(\mathbf{x}) = \mathbf{x}^T \mathbf{x}\). We compute the reference using a MALA scheme with for a total time of \(T= 10^5\) with stepsize \(h=10^{-5}\).

In Figure \ref{fixed budget}, we compare the efficiency of the various dynamics quantified by the MSE given a fixed computational budget \(N\), across different time-step sizes. We also compare the performance with a cheap preconditioner \(M(\mathbf{x}) = S^{-1}\), which is motivated by the fact that it corresponds to the optimal preconditioner when \(\psi=0\). For each sampler, the computational cost is defined by the number of force evaluations per time step, which constitutes the dominant computational overhead in high-dimensional spaces. This experiment relies on the numerical method employed, so we first explain our approach. 

In order to simulate the non linear dynamics above, we consider a splitting scheme for the numerical integration of the stochastic dynamics. Namely, we update the nonreversible dynamics using

\begin{equation}
        X_{n+1} = \Phi_h(X_n), \quad
\Phi_h \coloneqq  \Phi_{\Psi}^{h/2}  \circ \Phi_{P}^{h/2} \circ \Phi_{OU}^h  \circ \Phi_{P}^{h/2} \circ \Phi_{\Psi}^{h/2}. 
\end{equation}

Using the fact that Stratonovich noise preserves the chain rule, we define the following splitting for the Stratonovich perturbation
\begin{equation}
    X_{n+1} = \Phi_h(X_n), \quad
\Phi_h \coloneqq  \Phi_{\Psi}^{h/2}  \circ \Phi_{P}^{\beta_h/2} \circ \Phi_{OU}^h  \circ \Phi_{P}^{\beta_h/2} \circ \Phi_{\Psi}^{h/2}. \label{splitting stratonovich}
\end{equation}

 Here, we consider the splitting maps \(\Phi_{OU}, \Phi_P, \Phi_{\Psi} \), where each map is the integrator for the following problem respectively
\begin{align*}
  & dX_t = -S X_t dt + \sqrt{2}dW_t, \quad \text{(OU)} \\  \\
 & dX_t = -J \nabla V\left( X_t\right) dt,  \quad \text{(Perturbation)}  \\  \\
& dX_t = -\nabla \tilde \Psi\left( X_t\right) dt. \quad \text{(Non-linearity)}    
\end{align*}
We suppose that we have a fixed budget of computation \(N = 16\cdot 10^5\). We consider that the main cost of the method is to evaluate the forces. We want to compare the ratio cost/accuracy for the different dynamics.

We choose to update the non linear part of the drift using a RK\(4\) scheme, then the OU part of the dynamics using the exact flow and finally the perturbation using again a RK\(4\) scheme. 

When counting the evaluations of the forces at each step, we find that the reversibly perturbed process (RP) and the non-reversibly perturbed process (NRP) have a cost of \(16\) evaluations by time step whereas the preconditioned dynamics with a constant matrix \(S^{-1}\) (Prec) as well as the original process (OP) have a cost of \(8\) evaluations by time step. Hence, we run the former dynamics for a number of steps \(N/16\) and the latter for a number of step \(N/8\). We compute the relative error as the MSE divided by the dimension and plot the results for different time step using \(M = 10^4\) trajectories.

The results are presented in Figure \ref{fixed budget}. It appears that the Stratonovich perturbation and the nonreversible one offer a significant computational advantage in this setting.

\begin{figure}[H]
\begin{minipage}{0.48\textwidth}
        \centering
    \includegraphics[width=0.8\linewidth]{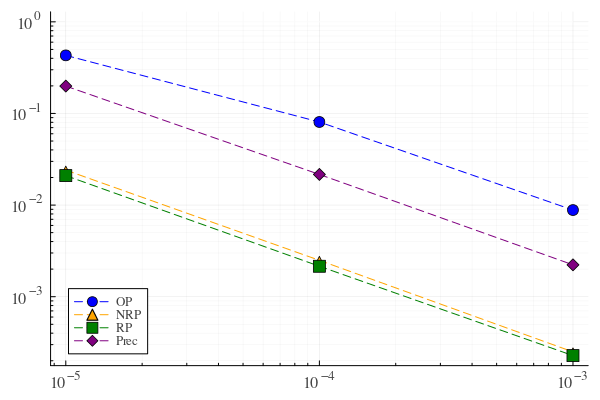}
    \caption{Relative MSE for different \(h\).}
    \label{fixed budget}
\end{minipage}
\hfill
\begin{minipage}{0.48\textwidth}
    \centering
    \includegraphics[width=0.8\linewidth]{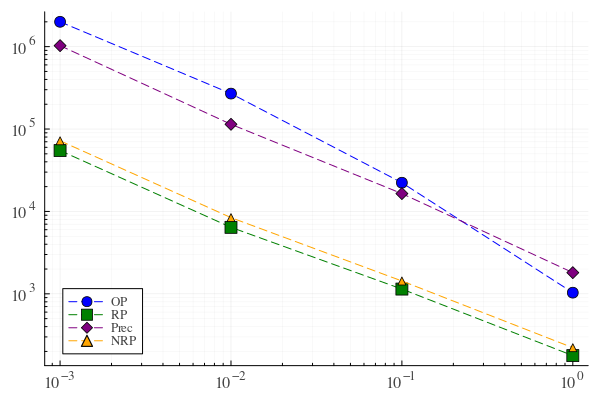}
    \caption{Number of step \(N\) to reach an error \(\epsilon\).}
    \label{fixed error}
\end{minipage}

\end{figure}

In the last experiment presented in Figure \ref{fixed error}, we use the splitting scheme \eqref{splitting stratonovich} and plot the number of step required to reach a fixed error. We simulate \(M=10^4\) trajectories of the dynamics in dimension \(d=20\) with a size step \(h = 10^{-4}\). It appears again that the Stratonovich perturbation behaves similarly to the nonreversible one, with a significant lead on the other process.
%
%
\section{Conclusions}\label{sec:conclusions}

The performance of the Langevin dynamics perturbed by a Stratonovich perturbation--first introduced in~\cite{StraoAbd19}--as a sampling scheme was studied in this paper. We showed that, for the case when the target distribution is Gaussian, an optimal Stratonovich perturbation with respect to maximizing the spectral gap exists and can be constructed in an algorithmic manner. Furthermore, we presented numerical experiments in which the superior performance of our proposed sampling scheme, relative to other Langevin-based sampling schemes, was demonstrated.

Several open questions remain. We mention here a few that are currently under investigation. First, the fully rigorous convergence analysis of the discretized Stratonovich-perturbed Langevin dynamics is an interesting question. Furthermore, the implementation of the proposed sampling scheme to high dimensional problems has not been explored in this paper. Finally, the rigorous analysis of the Stratonovich sampler for non-Gaussian target distributions remains open. Tackling this problem would require tools that are quite different from the ones that were used in the present article.

\section*{Acknowledgments}
G.L. and G.V. were partially supported by the Swiss National Science Foundation, projects No. 200020$\_$214819, No. 200020$\_$192129 and No. 10009199. G.A.P. is partially supported by an ERC-EPSRC Frontier Research Guarantee through Grant No. EP$/$X038645, ERC Advanced Grant No. 247031.

The AI Claude (Anthropic) was used during the preparation of this manuscript to chase typos in English phrasing and mathematical statements.

\bibliographystyle{abbrvnat}
\addcontentsline{toc}{section}{References}
\bibliography{biblio}

\begin{appendices}

\section{Complementary results for the 2 dimensional cases}\label{Complementary results for the 2 dimensional cases}

We recall that in Section \ref{Higher moments} we define
\[
G^{(n)}(k) \coloneqq \alpha_k^{(n)} - S_k^{(n)}, \quad k \in \{0, \dots, n\},
\]
where
\[
S_k^{(n)} = \frac{\gamma_{k-1}^{(n)}}{\eta_n(k-1)} + \gamma_k^{(n)} \eta_n(k),
\]
\[
\gamma_k^{(n)} = 2\sqrt{(n-k)(2n-2k-1)(k+1)(2k+1)},
\]
\[
\alpha_k^{(n)} = 4n + 8nk - 8k^2 - 2k.
\]

\begin{lemma}\label{borne n ge 11}
For all \(n \geq 1\) and all \(k = 0, \dots, n\), let
\[
\eta_n(k) = \left(1+\frac{1}{n}\right)\sqrt{\frac{2(n-k)}{2(n-k)-1}}.
\]
For all \(n \geq 11\) and all \(k = 0, \dots, n\), we have
\[
G^{(n)}(k) \geq \frac{5}{2}.
\]

\end{lemma}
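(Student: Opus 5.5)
The plan is to treat $G^{(n)}(k)$ as an explicit function of the two integers $n\ge 11$ and $0\le k\le n$. Write $m=n-k$ and $\eta_n(k)=(1+\tfrac1n)\,r(m)$ with $r(m)=\sqrt{2m/(2m-1)}$. The off-diagonal terms then factor as
\[
\gamma_k^{(n)}\eta_n(k)=2\Bigl(1+\tfrac1n\Bigr)\, m\sqrt{2}\,\sqrt{\tfrac{(k+1)(2k+1)}{2}}\cdot\sqrt{\tfrac{2m-1}{2m-1}}\;=\;2\Bigl(1+\tfrac1n\Bigr)\,m\,\sqrt{(k+1)(2k+1)},
\]
because $\sqrt{(2m-1)}\cdot r(m)=\sqrt{2m}$. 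Similarly,
\[
\frac{\gamma_{k-1}^{(n)}}{\eta_n(k-1)}=\frac{2\sqrt{(m+1)(2m+1)k(2k-1)}}{(1+\tfrac1n)\sqrt{(2m+2)/(2m+1)}}=\frac{\sqrt2\,(2m+1)\sqrt{k(2k-1)}}{1+\tfrac1n}.
\]
So $G^{(n)}(k)=\alpha_k^{(n)}-S_k^{(n)}$ becomes a combination of $\sqrt{(k+1)(2k+1)}$ and $\sqrt{k(2k-1)}$ with coefficients that are polynomial in $m$ and rational in $n$. The conventions $\gamma_{-1}^{(n)}=0$ and $\gamma_n^{(n)}=0$ make $k=0$ and $k=n$ one-sided.

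Next I would linearise the square roots with the elementary two-sided bounds
\[
\sqrt{(k+1)(2k+1)}\le \sqrt2\,\Bigl(k+\tfrac34\Bigr),\qquad \sqrt{k(2k-1)}\le\sqrt2\,\Bigl(k-\tfrac14\Bigr),
\]
which follow from AM--GM, and refine them by the second-order term $-\tfrac{1}{16\sqrt2\,k}$ where needed. With these bounds, $S_k^{(n)}$ is bounded above by $2(1+\tfrac1n)m(2k+\tfrac32)+(1+\tfrac1n)^{-1}(2m+1)(2k-\tfrac12)$. Using $\alpha_k^{(n)}=4n+8mk-2k$, the leading terms $8mk$ cancel exactly. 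What remains is a quadratic expression in $(m,k)$ whose positive part is of order $\tfrac{mk}{n}$, coming from the $(1+\tfrac1n)^{\pm1}$ mismatch, plus the constant $4n$. I would then show that this remainder is at least $5/2$ on the triangle $\{m+k=n\}$.

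To carry this out, I would substitute $m=n-k$ so that the remainder becomes a concave quadratic in $k$ for fixed $n$. Its minimum is therefore attained at an endpoint, $k=0$ or $k=n$. The case $k=0$ is immediate: $G^{(n)}(0)=4n-2\sqrt{n(2n-1)}\,\eta_n(0)$, which is of order $(2-2\sqrt2\cdot\ldots)$, grows linearly in $n$, and is easily above $5/2$ for $n\ge 11$.

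The main obstacle is the endpoint $k=n$. There $\alpha_n^{(n)}=2n$ and
\[
S_n^{(n)}=\frac{2n}{n+1}\sqrt{n(2n-1)}\Big/\sqrt2,
\]
so $G^{(n)}(n)=2n\bigl(1-\tfrac{n}{n+1}\sqrt{1-\tfrac1{2n}}\bigr)$. The first-order AM--GM bound loses exactly the margin needed here, since it only gives $\tfrac{5n}{2(n+1)}$. I would therefore expand $\sqrt{1-x}$ with $x=\tfrac1{2n}$ to second order, with its sharp upper bound, and check the resulting rational inequality in $n$ directly for $n\ge11$. I expect this endpoint to be tight or nearly so; a direct evaluation at $n=11$ gives roughly $2.3$. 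If it indeed falls short of $5/2$, the fallback is to adjust the last factor $\eta_n(n-1)$, or to use Gershgorin's theorem on columns for the final row. The quantity ultimately used in Lemma~\ref{lemma gap M_n} only needs to exceed $\gamma=3-\sqrt5$, which holds with a large margin.
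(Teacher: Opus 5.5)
\paragraph{The statement fails at $k=n$.} Your endpoint computation is right, and it decides the question \emph{against} the statement. The quantity $G^{(n)}(n)=2n-\frac{n}{n+1}\sqrt{2n(2n-1)}$ is an exact value, not an estimate, so no second-order refinement of $\sqrt{1-x}$ can raise it.

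\begin{itemize}
\item At $n=11$ it equals $22-\frac{11}{12}\sqrt{462}\approx 2.297$. The inequality $G^{(11)}(11)<\frac52$ is equivalent to $(234/11)^2<462$, i.e.\ $54756<55902$.
\item For general $n$, $G^{(n)}(n)<\frac52$ is equivalent, after squaring (both sides are positive for $n\ge2$), to $39n^2-10n-25>0$. So it holds for every $n\ge2$.
\item Asymptotically, $G^{(n)}(n)=\frac52-\frac{5}{2(n+1)}+\frac{1}{16n}+O(n^{-2})$, which tends to $\frac52$ from below.
\end{itemize}

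Hence your step ``check the resulting rational inequality in $n$ for $n\ge 11$'' must fail. Your fallbacks (changing $\eta_n(n-1)$, or column Gershgorin on the last row) do not prove this lemma either, because they change the quantity $G^{(n)}(k)$ that the lemma is about.

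\paragraph{What your argument actually proves.} For $1\le k\le n$, your linearization gives
$G^{(n)}(k)\ge \frac{4k^2-(2n^2-3)k+2n^3-\frac32 n^2-3n}{n(n+1)}$,
which equals $\frac{5n}{2(n+1)}$ at $k=n$. At the other endpoint, $G^{(n)}(0)=(4-2\sqrt2)n-2\sqrt2$. Therefore $G^{(n)}(k)\ge\frac{5n}{2(n+1)}\ge\frac{55}{24}$ for $n\ge 11$, but never $G^{(n)}(n)\ge\frac52$.

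\paragraph{Comparison with the paper.} The paper's proof follows essentially your route, with the cruder bound $\sqrt{2}(k+1)$. It arrives at the same $\frac{5n}{2n+2}$ and then asserts $\frac{5n}{2n+2}\ge\frac52$, which is false for every $n$. As you observe, $\frac{55}{24}>3-\sqrt5$ is all that Lemma~\ref{lemma gap M_n} needs, so the downstream result survives once the constant is corrected.

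\paragraph{Smaller points.}
\begin{itemize}
\item Your quadratic in $k$ is convex, not concave. Its $k^2$-coefficient is $-8+4\bigl((1+\tfrac1n)+(1+\tfrac1n)^{-1}\bigr)=\tfrac{4}{n(n+1)}>0$. So ``minimum at an endpoint'' requires checking that the vertex $k^\star=\tfrac{2n^2-3}{8}$ lies beyond $n$, which is true for $n\ge5$.
\item The leftover from the $(1+\tfrac1n)^{\pm1}$ mismatch is $-\tfrac{4mk}{n(n+1)}$, which is negative and $O(1)$. The positive part (about $2m$) comes from $4n$ against the linear terms.
\item Your first displayed identity drops a factor $\sqrt2$; it should read $2\sqrt2(1+\tfrac1n)m\sqrt{(k+1)(2k+1)}$. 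Your subsequent bound on $S_k^{(n)}$ does use the correct value.
\end{itemize}
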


\begin{proof}[Proof of lemma \ref{borne n ge 11}]
We first simplify \(S_k^{(n)}\). We obtain

\begin{align*}
\frac{\gamma_{k-1}^{(n)}}{\eta_n(k-1)}
&= \frac{1}{1+1/n}
\sqrt{2}(2n-2k+1)\sqrt{2k^2-k}, \\
\gamma_k^{(n)}\eta_n(k)
&= (1+1/n)\,2\sqrt{2}(n-k)\sqrt{2k^2+3k+1}.
\end{align*}

Thus,
\[
S_k^{(n)} =
\sqrt{2}\left[
\frac{1}{1+1/n}(2n-2k+1)\sqrt{2k^2-k}
+ (1+1/n)2(n-k)\sqrt{2k^2+3k+1}
\right].
\]
Using the inequalities for \(k \ge  1\)
\[
\sqrt{2k^2-k} \le \sqrt{2}k - \frac{\sqrt{2}}{4}, 
\quad
\sqrt{2k^2+3k+1} \le \sqrt{2}(k+1),
\]
we deduce
\begin{align*}
S_k^{(n)} 
&\le 2\left[
\frac{1}{1+1/n}(2n-2k+1)(k - 1/4)
+ (1+1/n)2(n-k)(k+1)
\right], \quad k \ge 1, \\
\quad S_0^{(n)} 
&= \sqrt{2}(1+1/n)2n.
\end{align*}

Now, we inject the upper bound in \(G\) for \(k\ge1\). 
\[
G^{(n)}(k) \ge  4n + 8nk - 8k^2 - 2k - 
2\left[ \frac{1}{1+1/n}(2n-2k+1)(k - 1/4) + (1+1/n)2(n-k)(k+1) \right].
\]
After simplification, we obtain
\[
G^{(n)}(k) \ge \frac{8 k + 8 k^2 - 8 n + 4 k n - 7 n^2 - 2 k n^2 + 2 n^3} {2 n (1 + n)}.
\]
The global minimum in \(k\) of the parabola \( 8 k^2 +  k (8 +4n - 2 n^2) +(- 8 n - 7 n^2  + 2 n^3)\) is reached at
\[
k^{\star} =\frac{n^2 - 2n -4}{8} \in [0,n] \iff  1 < n < 11.
\]
Since it is clear that \(G^{(n)}(0) = (4-\sqrt{8})n - 1\) is larger than the lower bound for all \(n\ge 11\), we conclude that, for \(n \geq 11\), the minimum is attained on the boundaries \(k = 1,n\). Therefore, we have for all \(n \geq 11\) and for all \(k = 0,\dots,n\)
\[
G^{(n)}(k)  \geq \min_{n \ge 11} \left\{ \frac{(16 - 4 n - 9 n^2 + 2 n^3)}{2n( 1 +  n)}, \frac{5n}{2n+2}  \right\} = \frac{5n}{2n+2}  \geq \frac{5}{2}.
\]

\end{proof}

\begin{lemma}\label{borne n leq 11}
For all \( 2 \leq n \leq 11\) and all \(k = 0, \dots, n\), let
\[
\eta_n(k) = \left(1+\frac{1}{10}\right)\sqrt{\frac{2(n-k)}{2(n-k)-1}}.
\]
For all \(1 \leq n \leq 11\) and all \(k = 0, \dots, n\), we have
\[
G^{(n)}(k) \geq \frac{9}{11}.
\]
\end{lemma}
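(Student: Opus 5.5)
The statement is a finite verification. Only the pairs $(n,k)$ with $2\le n\le 11$ and $0\le k\le n$ occur, about seventy of them, so I will not look for a structural argument. The plan is to put $G^{(n)}(k)$ in closed form, as in the proof of Lemma~\ref{borne n ge 11} but with the constant $1+\tfrac{1}{10}$ in place of $1+\tfrac1n$, and then bound every case rigorously.

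\textbf{Step 1: closed form.} Substituting $\eta_n(k)=\tfrac{11}{10}\sqrt{2(n-k)/(2(n-k)-1)}$ into the definition of $S_k^{(n)}$ gives, exactly as before,
\[
S_k^{(n)}=\sqrt2\Big[\tfrac{10}{11}(2n-2k+1)\sqrt{2k^2-k}+\tfrac{11}{10}\,2(n-k)\sqrt{2k^2+3k+1}\Big],
\]
with $G^{(n)}(k)=4n+8nk-8k^2-2k-S_k^{(n)}$. Two boundary conventions have to be checked:
\begin{itemize}
\item for $k=0$ the first term vanishes because $\gamma_{-1}^{(n)}=0$;
\item for $k=n$ the second term vanishes because $\gamma_n^{(n)}=0$. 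The factor $\eta_n(n)$ involves $\sqrt{0/(-1)}=0$, which causes no division by zero.
\end{itemize}

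\textbf{Step 2: the boundary rows.} These are where the bound is tightest, so I treat them in closed form.
\begin{itemize}
\item For $k=n$ we get $G^{(n)}(n)=2n-\tfrac{10}{11}\sqrt2\sqrt{2n^2-n}$. At $n=2$ this equals $4-\tfrac{10}{11}\sqrt{12}\approx0.851>9/11$. Since $\sqrt{2n^2-n}\le\sqrt2\,n-\tfrac{\sqrt2}{4}$, we have $G^{(n)}(n)\ge 2n\cdot\tfrac1{11}+\tfrac{5}{11}$, which is increasing in $n$ and already exceeds $9/11$ for $n\ge2$.
\item For $k=0$ we get $G^{(n)}(0)=4n-\tfrac{11}{10}\,2\sqrt2\,n=(4-2.2\sqrt2)n\ge 2(4-2.2\sqrt2)>9/11$ for all $n\ge2$.
\end{itemize}

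\textbf{Step 3: the interior rows $1\le k\le n-1$.} I will use the upper bounds $\sqrt{2k^2-k}\le\sqrt2(k-\tfrac14)$ and $\sqrt{2k^2+3k+1}\le\sqrt2(k+1)$. These turn the lower bound for $G$ into a quadratic polynomial in $k$ with rational coefficients depending on $n$, exactly as in Lemma~\ref{borne n ge 11}. That polynomial can be checked on the integer grid, or minimized over the interval $[1,n-1]$. If it is not sharp enough for the smallest $n$ (I expect $n=2,3$ to be the delicate cases), I fall back on direct evaluation of the exact expression at those few points. Each square root gets a certified rational upper bound, for example by truncating decimal expansions upward, so that every case reduces to a rational inequality.

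\textbf{Main obstacle: the statement has a typo.} The hypothesis says $2\le n\le 11$ but the conclusion says $1\le n\le11$, and the bound genuinely fails at $n=1$, $k=1$. There
\[
G^{(1)}(1)=2-\tfrac{10}{11}\sqrt2\approx0.714<9/11.
\]
I will therefore prove the lemma for $2\le n\le 11$ only. This loses nothing for Lemma~\ref{lemma gap M_n}, because the case $n=1$ is $\Lambda(M_2)$ itself and is covered by Lemma~\ref{Smallest eigenvalue M}. Apart from this, the main work is the bookkeeping in Step 3 for small $n$, where the margin above $9/11$ is only a few hundredths. That margin is why the fallback to exact, rationally bounded evaluation is needed there.
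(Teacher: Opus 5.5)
Your plan is correct and is essentially the paper's proof: the same closed form for $S_k^{(n)}$, the same two square-root bounds, and the resulting quadratic $H_n(k)=\frac{2k^2-2nk-18k+28n+25}{55}$. Its minimum over $1\le k\le n-1$ is $\frac{8n+45}{55}\ge\frac{61}{55}$, so the fallback to direct evaluation is never needed, and $H_n(n)=\frac{2n+5}{11}$ gives the sharp case $n=k=2$. You are also right that the conclusion should read $2\le n\le 11$: the paper's own argument only covers that range, and at $n=1$ no diagonal rescaling could give $9/11$, since the smallest eigenvalue of $T_2$ is $3-\sqrt5<9/11$.
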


\begin{proof}[Proof of lemma \ref{borne n leq 11}]
Substituting the explicit formulas for $\gamma_n(k)$ and $\eta_n(k)$, we can write
\begin{align*}
\frac{\gamma_n(k-1)}{\eta_n(k-1)} &= \frac{2\sqrt{(n-k+1)(2n-2k+1)k(2k-1)}}{\frac{11}{10}\sqrt{\frac{2(n-k+1)}{2n-2k+1}}} \\
&= \frac{10\sqrt{2}}{11}(2n-2k+1)\sqrt{k(2k-1)}.
\end{align*}
and
\begin{align*}
\gamma_n(k)\eta_n(k) &= 2\sqrt{(n-k)(2n-2k-1)(k+1)(2k+1)} \cdot \frac{11}{10}\sqrt{\frac{2(n-k)}{2n-2k-1}} \\
&= \frac{11\sqrt{2}}{5}(n-k)\sqrt{(k+1)(2k+1)}.
\end{align*}
After explicit substitution, the algebraic expression for $G^{(n)}(k)$ when $k \ge 1$ reduces to
\[
G^{(n)}(k) = \alpha_k^{(n)} - \frac{10\sqrt{2}}{11}(2n-2k+1)\sqrt{2k^2-k} - \frac{11\sqrt{2}}{5}(n-k)\sqrt{2k^2+3k+1}.
\]
We first check the endpoint of the domain. At $k = 0$
    \[
    G_n(0) = \alpha_n(0) - \gamma_n(0)\eta_n(0) = \left(4 - \frac{11\sqrt{2}}{5} \right)n.
    \]
For $k \ge 1$, we apply the following two algebraic inequalities to eliminate the square roots
\[
\sqrt{2k^2-k} \le \sqrt{2} \left(k - \frac{1}{4}\right), \qquad \sqrt{2k^2+3k+1} \le \sqrt{2}(k+1).
\]
Substituting these upper bounds into $G^{(n)}(k)$ produces a lower bound. We obtain
\begin{align*}
G^{(n)}(k) &\ge \alpha_k^{(n)} - \frac{20}{11}(2n-2k+1)\left(k - \frac{1}{4}\right) - \frac{22}{5}(n-k)(k+1) \\
&= (4n + 8nk - 8k^2 - 2k) - \frac{5}{11}(2n-2k+1)(4k-1) - \frac{22}{5}(n-k)(k+1).
\end{align*}
Expanding these expressions and placing them over the common denominator $55$ yields the clean quadratic polynomial $H_n(k)$
\[
G^{(n)}(k) \ge H_n(k) = \frac{2k^2 - 2kn - 18k + 28n + 25}{55}.
\]

We treat $H_n(k)$ as a continuous function of $k$ on the interval $[1, n]$. Because the leading coefficient of $k^2$ is $\frac{2}{55} > 0$, the parabola convex. The coordinates of its global minimum \(k^{\star}\) is
\[
 k^{\star} = \frac{n + 9}{2}.
\]
We evaluate the minimum of $H_n(k)$ across the domain $n \in [2, 11]$ by analyzing the position of this minimum.

\begin{enumerate}
    \item \textbf{Monotonic Regime ($2 \le n \le 8$):} For these values of $n$, the vertex lies strictly outside the right boundary of our domain ($ k^{\star} > n$). Consequently, $H_n(k)$ is strictly decreasing on $[1, n]$, meaning its minimum is localized precisely at the right endpoint $k = n$
    \[
    H_n(n) = \frac{2n^2 - 2n^2 - 18n + 28n + 25}{55} = \frac{2n + 5}{11}.
    \]
    Over the range $2 \le n \le 8$, this endpoint evaluation is minimized at the smallest index, $n = 2$
    \[
    H_n(n) \geq H_2(2) = \frac{9}{11}.
    \]

    \item \textbf{Remaining cases ($9 \le n \le 11$):} For $n = 9, 10, 11$, the vertex shifts inside the interval $[1, n]$. Evaluating the polynomial directly at $k^\star = \frac{n+9}{2}$ yields
    \[
    H_n\left(\frac{n+9}{2}\right) = \frac{-2n^2 + 56n -227}{55}.
    \]
    Evaluating this vertex function for $n=9, 10, 11$ produces the values $\frac{23}{11}, \frac{133}{55}, \frac{147}{55} $ respectively.
\end{enumerate}

Comparing the minima across all cases confirms that the global minimum of the lower bound is reached at $n=2, k=2$ with a value of \(\frac{9}{11}\).
\end{proof}

\end{appendices}

\end{document}